\newcommand{\BA}{{\mathbb {A}}} \newcommand{\BC}{{\mathbb {C}}}
 \newcommand{\BN}{{\mathbb {N}}} 
 \newcommand{\BR}{{\mathbb {R}}} 
 \newcommand{\BZ}{{\mathbb {Z}}} 
\newcommand{\CA}{{\mathcal {A}}} \newcommand{\CC}{{\mathcal {C}}} 
\newcommand{\CE}{{\mathcal {E}}} 
 \newcommand{\CL}{{\mathcal {L}}}
\newcommand{\CM}{{\mathcal {M}}} \newcommand{\CP}{{\mathcal {P}}}
 \newcommand{\CW}{{\mathcal {W}}}
\newcommand{\GL}{{\mathrm {GL}}} 
\newcommand{\SL}{{\mathrm {SL}}} 
\newcommand{\SO}{{\mathrm{SO}}}
 \newcommand{\Tr}{{\mathrm{Tr}}}
\newcommand{\Irr}{{\mathrm{Irr}}}
\newcommand{\Htemp}{\mathrm{Irr}_{\mathrm{H-temp}}}
\newcommand{\Res}{{\mathrm{Res}}}
\newcommand{\shal}{\mathrm{Irr}_{\mathrm{Shal}}}
\newcommand{\udist}{\mathrm{Irr}_{\mathrm{unit, dist}}}
\newcommand{\ugdist}{\mathrm{Irr}_{\mathrm{unit, gen, dist}}}
\newcommand{\dist}{\mathrm{Irr}_{\mathrm{dist}}}
\newcommand{\gdist}{\mathrm{Irr}_{\mathrm{gen,dist}}}
\newcommand{\usqdist}{\mathrm{Irr}_{\mathrm{sq,dist}}}
\newcommand{\tdist}{\mathrm{Irr}_{\mathrm{temp,dist}}}
\newcommand{\temp}{\mathrm{Irr}_{\mathrm{temp}}}
\newcommand{\uni}{\mathrm{Irr}_{\mathrm{unit}}}
\newcommand{\gen}{\mathrm{Irr}_{\mathrm{gen}}}
\newcommand{\ugen}{\mathrm{Irr}_{\mathrm{unit,gen}}}
\newcommand{\tshal}{\mathrm{Irr}_{\mathrm{temp,Shal}}}
\newcommand{\gshal}{\mathrm{Irr}_{\mathrm{gen,Shal}}}
\newcommand{\sm}{\mathcal{C}^{\infty}}
\newcommand{\Hom}{\mathrm{Hom}}
\newcommand{\ind}{\mathrm{ind}} \newcommand{\Ind}{\mathrm{Ind}} 
\newcommand{\Ad}{\mathrm{Ad}}
\newcommand{\triv}{{\mathbf{1}}}
\newcommand{\usq}{\mathrm{Irr}_{\mathrm{sq}}}
\def\-{^{-1}}
\def\1{\mathbf{1}}
\def\d{\delta}
\def\e{\varepsilon}
\def\St{\mathrm{St}}
\def\diag{\mathrm{diag}}
\renewcommand{\Re}{{\mathrm{Re}\,}}
\g@addto@macro\normalsize{\setlength\abovedisplayskip{3pt}}
\g@addto@macro\normalsize{\setlength\belowdisplayskip{3pt}}
\newcommand{\delete}[1]{}
\theoremstyle{plain}
\newtheorem{thm}{Theorem}[section] 
\newtheorem{cor}[thm]{Corollary}
\newtheorem*{st*}{Statement}
\newtheorem*{q*}{Question}
\newtheorem*{thm*}{Theorem}
\newtheorem*{lm*}{Lemma}
\newtheorem*{cor*}{Corollary}
\newtheorem*{rem*}{Remark}
\newtheorem{lem}[thm]{Lemma}  \newtheorem{prop}[thm]{Proposition}
\newtheorem {rem}[thm]{Remark}
\numberwithin{equation}{section}
\begin{document}

	\title[Gamma factors, root numbers, and distinction]{Gamma factors and root numbers of pairs for the Galois and the linear model}
	
	\author{N. Matringe}
	\address{Nadir Matringe. Institute of Mathematical Sciences, NYU Shanghai, 3663 Zhongshan Road North Shanghai, 200062, China and
		Institut de Math\'ematiques de Jussieu-Paris Rive Gauche, Universit\'e Paris Cit\'e, 75205, Paris, France}
	\email{nrm6864@nyu.edu and matringe@img-prg.fr}

	\maketitle
	
\vspace{-0.5cm}	
	
\begin{abstract} 
Using harmonic analysis on Harish-Chandra Schwartz spaces of various spherical spaces, we extend a relative local converse theorem of Youngbin Ok for the Galois model of $p$-adic $\GL_n$, from the class of cuspidal representations to that of square integrable representations, which is not its optimal form. We also prove a variant of this result for linear models by the same method. The above statements are luckily non empty as we verify triviality results for gamma and epsilon factors of pairs of distinguished representations at the central value $s=1/2$. Along the way, we offer a new proof of conjectures of D. Prasad and D. Ramakrishnan on local components of symplectic cuspidal automorphic representations, and root numbers of pairs of symplectic representations. 
\end{abstract}

\section{Introduction}

Let $F$ be a non Archimedean local field of characteristic zero, and $E/F$ a field extension of degree at most $2$. For $n\geq 1$ set $G_n:=\GL_n(E)$, and denote by $\theta$ the involution of $G_n$ induced by the Galois involution of $E/F$ when $[E:F]=2$, and the conjugation by the matrix 
$\diag(1,-1,\dots,(-1)^{n-1})$ when $E=F$. Let $H_n=G_n^\theta$ be the subgroup of $G_n$ fixed by $\theta$, and $\psi:E\to \BC^\times$ be a non trivial character of $E$, trivial on $F$ if $[E:F]=2$. We recall that a complex smooth irreducible representation of $G_n$ is called distinguished if it has a nonzero $H_n$-invariant linear form on its space, in which case this linear form lives in a one dimensional space by \cite[Proposition 11]{Fli} and \cite[Theorem 1.1]{JR}. We prove as the main result of this paper the following result (see Theorem \ref{thm main}). 

\begin{thm*}[RLC(n,n-1)]
Let $a:=\frac{2}{[E:F]}\in \{1,2\}$, $\psi:E\to \BC^\times$ be a non trivial character of $E$ trivial on $F$ when $[E:F]=2$, and $\pi$ be a square integrable representation of $G_{an}$ for $n\geq 2$. If the Rankin-Selberg central gamma value $\gamma(1/2,\pi,\pi',\psi)$ is equal to one for any $H_{a(n-1)}$-distinguished tempered representation $\pi'$ of $G_{a(n-1)}$, then $\pi$ is $H_{an}$-distinguished.
\end{thm*}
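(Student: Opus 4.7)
The plan is to follow the Ok-style paradigm, extending from cuspidal to square integrable representations by replacing compactly supported matrix coefficients with elements of Harish-Chandra Schwartz spaces of the relevant spherical varieties. One produces a relative Bessel-type period attached to $\pi$ whose non-vanishing detects $H_{an}$-distinction, and evaluates this period spectrally via the Plancherel decomposition of $L^2(H_{a(n-1)}\bs G_{a(n-1)})$.

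First, since every square integrable representation is generic, one fixes the Whittaker realization $\CW(\pi,\psi)$. For each tempered $H_{a(n-1)}$-distinguished representation $\pi'$ of $G_{a(n-1)}$ one introduces the Jacquet--Piatetski-Shapiro--Shalika local zeta integral $\Psi(s, W, W')$ for $W \in \CW(\pi,\psi)$ and $W' \in \CW(\pi',\psi^{-1})$. The local functional equation reads
\[
\widetilde{\Psi}(1-s, \widetilde{W}, \widetilde{W}') = \gamma(s, \pi, \pi', \psi)\, \Psi(s, W, W'),
\]
so the hypothesis $\gamma(1/2, \pi, \pi', \psi) = 1$ yields the symmetry $\widetilde{\Psi}(1/2, \widetilde{W}, \widetilde{W}') = \Psi(1/2, W, W')$ for all such $\pi'$ and all admissible $(W, W')$. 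This symmetry replaces, in the square integrable setting, the simpler cuspidal vanishing exploited by Ok.

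The second ingredient is the Harish-Chandra Plancherel formula on the Schwartz space $\mathcal{C}(H_{a(n-1)}\bs G_{a(n-1)})$, which decomposes its elements as spectral integrals over the tempered $H_{a(n-1)}$-distinguished dual. Applying this decomposition to a test function built from matrix coefficients of $\pi$ recasts the $H_{an}$-period of $\pi$ (which must be defined as a Schwartz-space distribution, since matrix coefficients of $\pi$ are only $L^2$ and not compactly supported modulo the center) as an integral whose integrand is essentially $\Psi(1/2, W, W')$ weighted by the Plancherel density. The $s=1/2$ symmetry obtained above, combined with uniqueness of $H_{a(n-1)}$-invariant forms from \cite[Proposition 11]{Fli} and \cite[Theorem 1.1]{JR}, then forces the spectral contribution to be non-trivial, which produces a non-zero $H_{an}$-invariant form on $\pi$.

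The main obstacle is making the spectral step precise: one must identify the value at $s=1/2$ of the Rankin-Selberg integrals with the Plancherel coefficients on $H_{a(n-1)}\bs G_{a(n-1)}$, and justify absolute convergence throughout. The cuspidal case of Ok is easier because matrix coefficients of $\pi$ are compactly supported modulo center; for square integrable $\pi$ one must work inside the Harish-Chandra Schwartz space and exploit its tempered asymptotics. A secondary subtlety is to verify that the condition $\gamma(1/2,\pi,\pi',\psi)=1$ is a genuine constraint rather than a tautology, so that the hypothesis is not vacuous on the tempered distinguished dual of $G_{a(n-1)}$; this is exactly the role played by the triviality results for $\gamma$ and $\epsilon$ factors at the central value highlighted in the abstract.
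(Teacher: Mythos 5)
Your outline points at the right framework (Ok's method upgraded to Harish-Chandra Schwartz spaces, the Rankin--Selberg functional equation at $s=1/2$, and Bernstein's Plancherel inversion for $L^2(H_{a(n-1)}\bs G_{a(n-1)})$), but the two steps that actually constitute the proof are missing, and the final logical mechanism you describe is not the one that works. First, the hypothesis $\gamma(1/2,\pi,\pi',\psi)=1$ is not used to show that some spectral contribution is ``non-trivial''; it is used in the opposite direction. One writes $W=W(\Phi)$ as a Fourier coefficient of a matrix coefficient $\Phi$ of $\pi$, forms the antisymmetrized coefficient $f(g)=c_0\Phi(g)-\Phi(w_{an}\diag(w_{an-1},1)g)$, and the functional equation with $\gamma=1$ shows that the Whittaker average of $f$ pairs to \emph{zero} against every $W''\in\CW(\pi'',\psi^{-1})$ for all tempered distinguished $\pi''$. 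Feeding this vanishing into Bernstein's inversion formula (together with Ok's inversion formula $\int \lambda_{\pi''}(\pi''(u^{-1})W'')\psi^{-1}(u)\,du=c(\lambda_{\pi''})W''(I)$, which is what identifies the abstract spectral coefficients with the Rankin--Selberg pairings, and the fact that the Plancherel measure is supported on tempered distinguished representations because the pair is tempered) yields the vanishing of the $H_{a(n-1)}$-period of $f$. That vanishing says that the two linear forms $\lambda_\pi\colon W\mapsto\int_{N^\theta_{an-1}\bs H_{an-1}}W(\diag(h,1))\,dh$ and $\mu_\pi\colon W\mapsto\int_{N^\theta_{an-1}\bs H_{an-1}}\widetilde W(\diag(h,1))\,dh$ are proportional with nonzero ratio; since $\lambda_\pi$ is $P_{an}^\theta$-invariant, $\mu_\pi$ is ${}^t\!P_{an}^\theta$-invariant, $\lambda_\pi\neq 0$ by Bernstein--Zelevinsky derivative theory, and $H_{an}$ is generated by $P_{an}^\theta$ and its transpose, one concludes $H_{an}$-invariance. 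None of this invariance-upgrading argument appears in your proposal, and without it the conclusion ``produces a non-zero $H_{an}$-invariant form'' has no support.

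Second, the analytic input you flag as ``the main obstacle'' is precisely what must be proved, not assumed: one needs that the function $P_f(g)=\int_{Z_{a(n-1)}^\theta}\int^*_{U_{an}}f(u\,\diag(zg,1))\psi^{-1}(u)\,du\,dz$ is well defined and lies in $\CC(Z_{a(n-1)}^\theta\bs G_{a(n-1)})$, so that Bernstein's inversion and the absolute convergence of the period integrals apply. This rests on the stabilization of the inner unipotent integral, on asymptotic expansions of $f_\psi$ along $z_{n-1}(t)$ with positive exponents coming from Casselman's square-integrability criterion, and on Harish-Chandra's bound placing restricted matrix coefficients in $\CC(\GL_k(E)^1)$; one also needs the convergence of $\int_{N/N^\theta}\int_{Z^\theta\bs H}|f(uh)|\,dh\,du$ for $f$ in the Schwartz space. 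In addition, when $E=F$ your reduction to tempered distinguished twists of $G_{a(n-1)}$ needs the observation that such representations are exactly those of the form $\pi'\times 1$, with $\gamma(1/2,\pi,\pi'\times 1,\psi)=\gamma(1/2,\pi,\pi',\psi)\gamma(1/2,\pi,\psi)$, which introduces the nonzero constant $c_0=\gamma(1/2,\pi,\psi)$ into the antisymmetrization; your sketch does not account for this case. As it stands the proposal is a correct description of the ambient strategy but not a proof: the spectral vanishing step, the Schwartz-space membership of $P_f$, and the $P^\theta$/${}^t\!P^\theta$ generation argument all remain to be supplied.
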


 We refer to the above statement as RLC(n,n-1), with RLC for relative local converse, because if one allows $E=F\times F$ (the so called group case), this statement transforms into the usual local converse theorem of Henniart in \cite{H93} for square integrable representations. More generally RLC(n,m) refers to the statement of the theorem, where one twists by $H_{am}$-distinguished tempered representations. The statement of RLC(n,n-1) is not empty as we check in Corollary \ref{cor trivial gamma} that $\gamma(1/2,\pi,\pi',\psi)$ is equal to one for any $H_{an}$-distinguished generic unitary representation $\pi$ of $G_{a_n}$ and any $H_{am}$-distinguished tempered representation $\pi'$ of $G_{am}$ whenever $n,m\geq 1$. Actually if one considers the root number instead, we check in Theorem \ref{thm trivial local root} that it is equal to one for any pair $(\pi,\pi')$ of irreducible distinguished representations of $G_{an}$ and $G_{am}$.

When $E=F$ our main result is completely new. On the other hand when $[E:F]=2$ it has a longer history which we recall. The statement RLC(2,1) was first obtained in the pioneering work \cite{Hduke} for $\GL_2(E)$. Hakim actually proves that the theorem holds for any generic unitary representation $\pi$ of $\GL_2(E)$, and its statement was extended to all generic representations in \cite{MatPac}. For $\GL_3(E)$, it is a consequence of \cite[Theorem 1.7.1]{HO} that RLC(3,2) holds for any generic unitary representation. Note that these works allow twists by generic unitary representations, but as such representations always have enough tempered representations in their connected component, tempered twists are sufficient by meromorphy of gamma factors, as was observed in \cite[Lemma 5.4]{MO}. 
Following Hakim's work on $\GL_2(E)$, it was proved by Ok in \cite[Main Theorem III]{Ok} that RLC(n,n-1) holds when $\pi$ is cuspidal and $[E:F]=2$. One fundamental input of Ok's proof is Bernstein's abstract Plancherel formula (\cite{BernPL}) for the space $\sm_c(\GL_{n-1}(F)\backslash \GL_{n-1}(E))$. In \cite[Section 3]{HO}, a strategy was then proposed, using the classification of distinguished discrete series in terms of cuspidal representations, to reduce the RLC(n,n-1) (and actually RLC(n,n-2)) from the case of discrete series to Ok's cuspidal case. Namely \cite[Section 3]{HO} proves that RC(n,n-1) holds for discrete series of $\GL_n(E)$ if one moreover assumes that the discrete series representation $\pi$ is conjugate selfdual. Following \cite{HO}, we proved with Offen in \cite{MO} that one can indeed reduce to the conjugate selfdual case for square integrable representations, except in the case where $\pi$ is a generalized Steinberg representation of the form $\St_2(\rho)$, where $\rho$ is a cuspidal representation of some $\GL_r(E)$ for $r\geq 2$. In other words we obtained RLC(n,n-1) as well as RLC(n,n-2), except for the "exceptional" discrete series representations of the form $\St_2(\rho)$. These specific discrete series representations have actually nothing exceptional, and the incompleteness of the result in \cite{MO} is due to the fact that its method relies on a trick coming from explicit properties of $L$ functions and gamma factors, which does not work in this special case if one restricts to twists by generic distinguished representations of $\GL_{n-1}(E)$. In fact the proof of the RLC(n,n) is complete by \cite{MO}. 

Here, either when $[E:F]=2$ or $E=F$, we directly extend Ok's proof, using Bernstein's abstract Plancherel formula for the Harish-Chandra Schwartz space 
$\CC(H_{an-1}\backslash G_{an-1})$. This extension is far from being immediate and poses many technical difficulties, as well as minor extra complications when $E=F$.  

Let us denote by $Z_n$ the center of $G_n$, and by $U_n$ the unipotent radical of its standard parabolic subgroup of type $(n-1,1)$. Fix a non trivial character $\psi:E\to \BC^\times$ trivial on $F^\times$ when $[E;F]=2$, and denote by the same letter the non degenerate character of the group of unipotent upper triangular matrices of $G_n$ obtained from $\psi$ in the usual way. The basic idea is to express Whittaker functions in the Whittaker model of a square integrable representation $\pi$ of $G_n$ as Fourier coefficients of matrix coefficients thanks to \cite{LM}. Then the technical heart of this paper is to prove that if $f$ is a matrix coefficient of $\pi$, the function 
\[P_f:g \in G_{n-1} \to \int_{Z_{n-1}^{\theta}} \int_{U_n} f(u\diag(z g,1))\psi^{-1}(u)dudz\] makes sense, and actually belongs to the Harish-Chandra Schwartz space $\CC(Z_{n-1}^{\theta}\backslash G_{n-1})$. Though quite delicate, this result is facilitated by the fact that the inner integral over $U_n$ actually stabilizes. The Whittaker average of $P_f$ is then of the form $\int_{Z_{n-1}^\theta} W(\diag(zg,1))dz$ for $W\in \CW(\pi,\psi)$, and any function of the form $\int_{Z_{n-1}^\theta} W(\diag(zg,1))dz$ can be obtained by this procedure. Once this is proved, we need to verify the absolute convergence of a certain integral considered in Section \ref{sec abs cv}, to guarantee that Ok's argument extends to discrete series. We use that the the Plancherel measure of $L^2(H_{n-1}\backslash G_{n-1})$ is supported on tempered representations, as follows from the papers \cite{GO}, \cite{CZlocal} and \cite{BPGalsq}.

Though our result is optimal in terms of the class of $\pi$ to which it applies, one can imagine 
that RLC(n,n-2) holds with a similar proof (see \cite{HO} and \cite{MO}), and more optimistically its $(n,\lfloor n/2 \rfloor)$ version (see \cite{Nien} for finite fields). 

The paper is organized as follows. In Section \ref{sec prel} we introduce the material needed on reductive groups, their symmetric pairs, their Harish-Chandra Schcwartz spaces and their representations. In Section \ref{sec BFIF} we recall the abstract Fourier inversion formula for the Harish-Chandra Schwartz space of a Gelfand symmetric pair, following Bernstein. In Section \ref{sec trivial root} we verify that the central gamma and epsilon values of interest to us are indeed equal to one. Note that we had already done it in \cite{MO} when $[E:F]=2$ using Ok's result for cuspidal representations, and here when $E=F$, we again reduce the result to the cuspidal case proved in \cite{PR99} passing to the Langlands parameter's side. Section \ref{sec part Four} is where we prove the main technical result on the map $P_f$ explained above in the introduction. We then  prove our main result Theorem \ref{thm main} in the final section \ref{sec RLC}. 

Finally, we comment on the restriction to $p$-adic fields in this paper. Indeed Ok's thesis is written for any non-Archimedean local field of characteristic different from two. We believe that all the results in the present paper hold in this situation, but the majority of the references used in our paper have the characteristic zero restriction, though it is not a serious one in any of them. However justifying this latter claim carefully would either require a separate note, or make the present paper lengthy and unpleasant to read if it was to be done here. 

\subsection*{Acknowledgement.} We are grateful to Alberto Minguez for inviting us to Vienna University in June 2024, where the elaboration of this project started. We thank Jeffrey Hakim for letting us know about Ok's work and providing it to us many years ago. We thank U.K. Anandavardhanan, Dipendra Prasad, and Rapha\"el Beuzart-Plessis for useful communication and discussions on the topic of this paper. We also thank a referee for useful comments concerning a previous version of this paper, concerned with the Galois model only.

\subsection*{Update.} It was observed by Beuzart-Plessis that the main result of this paper extends to tempered representations, following the same strategy. We refer to Remark \ref{rem final 1} for more details. 

\section{Preliminaries}\label{sec prel}

\subsection{Locally profinite groups}

The letter $G$ always denotes a group, and $Z$ or $Z(G)$ its center. We write $K\leq G$ for ''$K$ is a subgroup of $G$''. We set 
\[G(k):=\{x^k,\ x\in G\}.\] If $G$ is locally compact and totally disconnected, we denote by $\delta_G$ its modulus character, and by $\sm(G)$ the space of complex valued locally constant functions on $G$. We denote by $R$ and $L$ the actions of $G$ on $\sm(G)$, given respectively by right and left translation: 
\[R(g)f(x)=f(xg)\] and \[L(g)f(x)=f(g^{-1}x).\] If $\CL$ is a subspace of $\sm(G)$, we set 
\[\CL^K=\{f\in \CL, \ \forall k \in K, \ R(k)f=f\}\] and 
\[\CL_K=\{f\in \CL, \ \forall (k,k')\in K^2, \ R(k)L(k')f=f\}.\] 
For $H\leq G$ a closed subgroup, we define $\sm(H\backslash G)$ to be the set of locally constant functions on $H\backslash G$, and denote by $\sm_c(H\backslash G)$  its subspace consisting of functions with compact support in $H\backslash G$. When $H\backslash G$ has a right $G$-invariant measure, we denote by $L^2(H\backslash G)$ the corresponding $L^2$ space. Throughout this paper we do not insist on the choice of invariant measures on homogeneous space, but they are made in a coherent enough way for all usual identities that we use to hold. 

\subsection{Non Archimedean reductive groups}

Let $F$ be a non Archimedean local field with normalized absolute value $|\ |_F$. Suppose that $G$ is (the group of $F$-points of) a connected reductive group defined over $F$. We denote by $G^1$ the intersection of the kernels of all unramified characters of $G$, and refer to \cite[p. 239]{WPL} for a less tautological definition. For example, when $G=\GL_n(F)$ we have 
\[\GL_n(F)^1=\{g\in \GL_n(F), \ |\det(g)|_F=1\}.\] 

We fix $K_0$ a maximal compact (open) subgroup of $G$ with the properties described in \cite[p.240]{WPL}. We fix \[\sigma:G\to \BC\] the logarithm of a norm function as defined in \cite[before I.2]{WPL}. We call $\sigma$ a log-norm function on $G$. It is actually a non negative map in 
$\sm(G)_{K_0}$, which is invariant under $g\to g^{-1}$. For $d\in \BR$, we then put 
\[N_d(g)=N_d^G(g):=(1+\sigma(g))^d.\] We define 
\[\sigma_*:Z\backslash G\to \BC\] by 
\[\sigma_*(g)=\inf_{z\in Z} \sigma(zg).\] It is a log-norm on $Z\backslash G$ by \cite[p. 1530]{CZlocal}, and for $d\in \BR$, we set 
\[N_{d,*}(g)=(1+\sigma_*(g))^d.\]
We also need the spherical coefficient 
\[\Xi:=\Xi_G:G\to \BC\] defined in \cite[II.1]{WPL}. Note that $\Xi=\Xi_G=\Xi_{Z\backslash G}$. It is a positive function in $\sm(G)_{K_0}$ and it is also invariant under $g\to g^{-1}$. 

Let $X$ be a set and $Y$ a subset of $X$. If $f$ is a map on $X$, we denote by $\Res_Y(f)$ its restriction to $Y$. If $f_1, \ f_2$ are maps from $X$ to $\BR_{\geq 0}$, we write \[f_1\prec f_2\] if there exists a $c\in \BR_{>0}$ such that 
$f_1\leq c f_2$. 
By definition, the Harish-Chandra Schwartz space $\CC(G)$ of $G$ is defined as 
\begin{equation} \label{eq HCS} \CC(G)=\bigcup_K \{f\in \sm(G)_K,\ \forall \ d\in \BR, \ f\prec \Xi N_{-d}\},\end{equation} where the union is over all compact open subgroups $K$ of $G$. Note that we could replace $\BR$ by any of its subsets containing an unbounded subset of $\BR_{\geq 0}$ in the above definition. 
We define the Harish-Chandra Schwartz space $\CC(G^1)$ by replacing $G$ by $G^1$ in the Equation \eqref{eq HCS}. 

\subsection{Some properties of symmetric spaces}\label{sec sym}

In this section $F$ is a non Archimedean local field of characteristic zero, $G$ is an $F$-reductive group, and $\theta$ is an $F$-rational involution of $G$. We let $H$ be a symmetric subgroup of $G$ contained in $G^{\theta}$, as in \cite[p.1531]{CZlocal}. Following \cite[Section 2]{DH} we define the following functions on $H\backslash G$:
\[\sigma^{H\backslash G}(Hg)=\sigma(\theta(g^{-1})g)\] (defined in \cite[(2.19)]{DH}) and 
\[\Xi^{H\backslash G}(Hg)=\Xi(\theta(g^{-1})g),\] (which is $\Theta_G^2$ in \cite[(2.21)]{DH}). We then set 
\[N_d^{H\backslash G}(Hg)=(1+\sigma^{H\backslash G}(Hg))^d. \]

We recall that a torus $A$ of $G$ is called $\theta$-split if it is $F$-split, and satsifies that $\theta(a)=a^{-1}$ for all $a\in A$. A parabolic subgroup $P$ of $G$ is $\theta$-split if $\theta(P)=P^-$, where $P^-$ is the parabolic subgroup of $G$ opposite to $P$. For $P$ a minimal $\theta$-split parabolic subgroup of $G$, we denote by $A_{P,\theta}$ the maximal $\theta$-split torus in the center of Levi subgroup $M:P\cap \theta(P)$. If moreover $\Delta_{P,\theta}$ is the set of simple roots (see \cite[Proposition 5.9]{HW}) of $A_{P,\theta}$ acting on the Lie algebra of $P$, we set 
\[A_{P,\theta}^+=\{a\in  A_{P,\theta}, \ \forall \alpha\in \Delta_{P,\theta}, \ |\alpha(a)|_F\leq 1 \}.\]
Then by \cite[Theorem 1.1]{BO}, there exists a finite set $\CP$ of minimal $\theta$-split parabolic subgroups of $G$, and a compact subset $\Omega$ of $G$, such that \[G=\bigcup_{P\in \CP} HA_{P,\theta}^+\Omega.\] This is called the Cartan decomposition of $H\backslash G$.

\subsection{Specific notations for symmetric pairs attached to GLn}

Here $F$ is a non Archimedean local field. We denote by $E$ a separable field extension of $F$ of degree at most $2$. For $n\geq 1$, we set \[G_n=\Res_{E/F}\GL_{n}(F)=\GL_{n}(E).\] We denote by $\theta$ the involution of $G_n$ induced by the Galois conjugation of $E/F$ when $[E:F]=2$, whereas 
$\theta:=\Ad(\diag(1,-1,\dots,(-1)^{n-1}))$ is the inner involution of $G_n$ by the matrix $\diag(1,-1,\dots,(-1)^{n-1})$ when $E=F$. Seeing $G_n$ as a Weil restriction of scalars allows us to consider 
$Z_n^{\theta}\backslash G_n$ as an $F$-reductive group with anisotropic center, this center being trivial when $E=F$. We denote by $N_{n}$ the subgroup of $G_n$ consisting of upper triangular unipotent matrices, and by $T_{n}$ its diagonal torus. The group $T_{n}$ can be parametrized by $(E^\times)^{n}$ via simple roots:
\[t:(z_1,\dots,z_{n})\to \diag(z_1\dots z_n,z_1\dots z_{n-1},\dots, z_{n})\] is a group isomorphism between $(E^\times)^{n}$ and 
$T_n$. We put $B_{n}=N_{n}T_{n}$, and denote by $B_{n}^-$ its image under transpose. If 
$\psi:E\to \BC^\times$ is a non trivial character, it defines the non degenerate character $\psi:N_{n}\to \BC^\times$ given by \[\psi(x)=\psi(\sum_{i=1}^{n-1} x_{i,i+1}).\] We set 
\[K_{n}=\GL_{n}(O_E),\] where $O_E$ is the ring of integers of $E$. For $g\in \GL_n(E)$, we put 
\[\nu_E(g)=|\det(g)|_E.\] Whether $E=F$ or not, we set \[|\ |:=|\ |_F\] and \[\nu(h)=|\det(g)|\] for $g\in \GL_n(F)$. 

When $E=F$, we have an explicit isomorphism from \[h_n:G_{\lfloor \frac{n+1}{2} \rfloor }\times G_{\lfloor \frac{n}{2} \rfloor}\simeq H_n\] defined in \cite[Section 2.1]{MatCrelle}. We can define the following character of $H_n$:

\[ \delta_n(h(g_1,g_2)):=\nu(g_1)\nu(g_2)^{-1}.\]

We denote by $P_{a_1,\dots,a_r}=M_{a_1,\dots,a_r}N_{a_1,\dots,a_r}$ the standard Levi decomposition of the standard parabolic subgroup of $G_n$ attached to the composition $(a_1,\dots,a_r)$ of $n$. We denote by $P_{n}$ the mirabolic subgroup of $G_n$, which consists of matrices in $P_{n-1,1}$ with bottom row equal to \[\eta_n:=(0\ \dots \ 0 \ 1),\] and we set $U_{n}:=N_{n-1,1}$. For $t\in E^\times$, we set 
\[z_{n-1}(t)=\diag(tI_{n-1},1)\] and for $x\in E^{n-1}$, we set 
\[u(x)=\begin{pmatrix} I_{n-1} & x \\ 0 & 1\end{pmatrix}\in U_{n}.\]

When $E=F$, we denote by $S_{2n}$ the Shalika subgroup of $G_{2n}$: it is the set of matrices of the form $s(g,x)=\diag(g,g)\begin{pmatrix} I_n & x \\ & I_n \end{pmatrix}$ when $(g,x)$ varies in $\GL_n(F)\times \CM_n(F)$. 

\subsection{The Harish-Chandra Schwartz space of a symmetric space}\label{sec HC sym}

Here $F, G$ and $H$ are as in Section \ref{sec sym}. The Harish-Chandra Schwartz space $\CC(H\backslash G)$ is defined in \cite[Section 4]{DH} as
\[\CC(H\backslash G)=\bigcup_K \{f\in \sm(H\backslash G)^K,\ \forall \ d\in \BR, \ f\prec \Xi^{H\backslash G} N_{-d}^{H\backslash G}\},\] where the union is again over all compact open subgroups $K$ of $G$.  Again we can replace $\BR$ by any unbounded subset of $\BR$ containing $\BR_{\geq 0}$ in its definition. The space $\CC(H\backslash G)$ is equipped with an LF-space structure by its very definition due to the following fact: the topology on 
$\CC(H\backslash G)_K$ is given by the family of semi-norms 
\[\nu_d(f)=\sup_{H\backslash G} f\times (\Xi^{H\backslash G})^{-1} \times N_{d}^{H\backslash G}\] for $d\in \BN$, which endow it with a Fr\'echet space structure. We observe that \[\CC(H\backslash G)\subseteq L^2(H\backslash G)\] thanks to \cite[Lemma 2.1]{DH}. On the other hand 
\[\sm_c(H\backslash G)\subseteq \CC(H\backslash G).\]  It is known that both inclusions are dense with respect to the topology of the bigger space. 

Because later we will use the results of \cite{BernPL}, we note that it is explained in \cite[Section 2.6.1]{DHS} that the definition of the Harish-Chandra Schwartz space of $H\backslash G$ that we use here coincides with that given in \cite{BernPL}.

We mention that the space $H\backslash G$ has polynomial growth in the sense of Bernstein as shown in \cite[Example 4.3.6]{BernPL}. We note that Bernstein assumes the two properties (i) and (ii) in \cite[Example 4.3.6]{BernPL} to claim polynomial growth, but they have since been verified thanks to \cite[Proposition 5.9]{HW} and the Cartan decomposition of \cite[Theorem 1.1]{BO}. This polynomial growth property is required for the Bernstein's inversion formula recalled in Section \ref{sec BFIF} of the present paper. 

\subsection{Representations} 

Let $F$ be a non Archimedean local field of characteristic zero, and $G$ be an $F$-reductive group. We consider smooth complex representations of $G$ and its closed subgroups, and call them representations. If $(\pi,V_\pi)$ is a representations of $G$ and $K\leq G$, we denote by 
$V_\pi^K$ the space of $K$-invariant vectors in $V_\pi$. The different types of inductions from a closed subgroup $K$ to $G$ are always normalized, and we denote by $\ind_K^G(\tau)$ the representation of $G$ compactly induced from the representation $\tau$ of $K$, whereas we use the notation $\Ind_K^G(\tau)$ for the full induced representation. We denote by $\Irr(G)$ the class of irreducible representations of $G$, by $\uni(G)$ its subclass consisting of unitary representations, by $\temp(G)$ the subclass of $\uni(G)$ consisting of tempered representations, and by $\usq(G)$ the subclass of $\temp(G)$ consisting of square-integrable representations. When $\pi\in \uni(G)$, there exists up to homothety a unique $G$-invariant scalar product on $\pi$ which we denote $\langle \ , \ \rangle_\pi$, and we denote by $\Pi$ the Hilbert completion of $\pi$. If $H$ is a subgroup of $G$, we say that 
a representation $\pi\in \Irr(G)$ is $H$-distinguished if $\Hom_H(V_\pi,\BC)$ is not reduced to zero. We say that the pair $(G,H)$ is a Gelfand pair if $\Hom_H(V_\pi,\BC)$ has dimension at most one for all $\pi\in \Irr(G)$. If $H$ is a symmetric subgroup of $G$, we say that $(G,H)$ is a symmetric Gelfand pair if it is a Gelfand pair. We shall later specialize to general linear groups, when considering generic representations. We fix $\psi:E\to \BC^\times$ a non trivial character of $E$. We say that $\pi\in \Irr(G_n)$ is generic if $\Hom_{N_n}(V_\pi,\psi)$ is not reduced to zero, and this notion is independent of the non trivial character $\psi:E\to \BC^\times$. It is known since \cite{GK} that $\Hom_{N_n}(V_\pi,\psi)$ can have dimension at most one, and when this dimension is one, we denote by $\CW(\pi,\psi)$ the Whittaker model of $\pi$ with respect to $\psi$. We denote by $\gen(G_n)$ the class of generic representations of $G_n$. Moreover when $E=F$ and $n$ is even, we define the Shalika character $\Theta:S_n\to \BC^\times$ by the formula 
\[\Theta(s(g,x))=\psi(\Tr(x)).\] We say that $\pi\in \Irr(G_n)$ admits a Shalika model if $\Hom_{S_n}(\pi,\Theta)\neq 0$. We denote by $\shal(G_n)$ the set of irreducible representations with a Shalika model. By \cite{Fli} and \cite{JR}, it is known that $\Hom_{H_n}(\pi,\BC)$ is one dimensional whenever $\pi\in \dist(G_n)$, and when $n$ is even it also follows from 
\cite{JR} that $\Hom_{S_n}(\pi,\Theta)$ has dimension one for all $\pi \in \shal(G_n)$. We will use the notational rule 
\[\Irr{\star}(G)\cap \Irr{\bullet}(G)=\Irr{\star,\bullet}(G).\] 
When $E=F$, it is known that 
\[\gdist(G_{2n})=\gshal(G_{2n})\] thanks for example to \cite{MatCrelle} and \cite{MatBLMS}, hence in particular 
\[\tdist(G_{2n})=\tshal(G_{2n}).\]

The above equalities hold for Archimedean $F$ as well due to the \cite[Appendix B]{Sign}, \cite[Theorem 2.1]{CJLT}, and the fact that for 
$\GL_2(\BR)$ and $\GL_2(\BC)$, discrete series representations admit a Shalika model if and only if they admit a linear period if and only if their central character is trivial as can be seen by a standard argument using Rankin-Selberg Zeta integrals. There is also an argument of Gan using the Theta correspondence which also works uniformly over local fields (\cite{Ganthetaperiods}). Actually the following result due to Friedberg and Jacquet shall be enough for our purpose. 

\begin{lem}\label{lm FJ}
For $E=F$ and $n\geq 1$, one has $\shal(G_{2n})\subseteq \dist(G_{2n})$. 
\end{lem}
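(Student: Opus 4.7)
The plan is to use the local zeta integral of Friedberg--Jacquet to convert a Shalika functional into an $H_{2n}$-invariant linear form. Let $\Lambda \in \Hom_{S_{2n}}(V_\pi, \Theta)$ be a nonzero Shalika functional, provided by the assumption $\pi \in \shal(G_{2n})$. I would consider the local zeta integral
\[
Z(v, s) := \int_{\GL_n(F)} \Lambda\left( \pi \begin{pmatrix} g & 0 \\ 0 & I_n \end{pmatrix} v \right) |\det g|^{s-1/2} \, dg.
\]

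First I would verify that $Z(v, s)$ converges absolutely for $\Re(s)$ sufficiently large and admits meromorphic continuation as a rational function of $q^{-s}$. The restriction $g \mapsto \Lambda(\pi(\diag(g, I_n)) v)$ is a smooth function on $\GL_n(F)$ whose asymptotic behavior along the diagonal torus is modeled on that of a Whittaker function (via the Shalika equivariance and standard Jacquet-module arguments), so the Friedberg--Jacquet estimates yield convergence in a right half-plane and meromorphic continuation to $s \in \BC$.

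Next I would check formal $H_{2n}$-equivariance directly. For $h = \diag(g_1, g_2) \in H_{2n}$, the factorization
\[
\diag(g g_1, g_2) = s(g_2, 0) \cdot \diag(g_2^{-1} g g_1, I_n),
\]
combined with the Shalika transformation property $\Theta(s(g_2, 0)) = 1$ and the change of variable $g = g_2 u g_1^{-1}$ in the Haar integral on $\GL_n(F)$, yields
\[
Z(\pi(h) v, s) = \left| \frac{\det g_2}{\det g_1} \right|^{s-1/2} Z(v, s).
\]
Specializing to $s = 1/2$ then turns $\ell := Z(\cdot, 1/2)$ into a genuinely $H_{2n}$-invariant linear form on $V_\pi$. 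Non-vanishing of $\ell$ will follow by selecting a $v$ whose image in the Shalika model has support concentrated near $I_{2n}$, so that the integrand is essentially a bump supported near $g = I_n$.

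The main obstacle is ensuring that $Z(v, s)$ is holomorphic at the critical value $s = 1/2$ for an arbitrary $\pi \in \shal(G_{2n})$, without any a priori temperedness or unitarity hypothesis. This is the central technical input one extracts from Friedberg--Jacquet, who identify $Z(v, s)$ with a Rankin--Selberg-type integral whose polar behavior is controlled by exterior-square $L$-factors; the holomorphy at the unitary point $s = 1/2$ reflects the fact that Shalika-distinction forces the exterior-square $L$-function to have its pole at $s = 1$ rather than $s = 1/2$, leaving $Z(v, s)$ regular at the center of the critical strip.
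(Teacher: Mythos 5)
Your route is exactly the Friedberg--Jacquet/Jacquet--Rallis zeta-integral argument that the paper invokes (it simply cites \cite[p.67]{JR} and \cite{FJ}), but your final step has a genuine gap. The polar behavior of $Z(v,s)=\int_{\GL_n(F)}\Lambda(\pi(\diag(g,I_n))v)|\det g|^{s-1/2}dg$ is governed by the \emph{standard} $L$-factor $L(s,\pi)$ (this is the content of the local Friedberg--Jacquet theory), not by the exterior-square $L$-function: the pole of the exterior-square $L$-function at $s=1$ is what detects Shalika-distinction in the Jacquet--Shalika theory, but it says nothing about regularity of $Z(v,s)$ at $s=1/2$. Since the lemma is stated for an arbitrary irreducible $\pi$ with a Shalika model --- no unitarity or temperedness is assumed --- you have no a priori control on $L(s,\pi)$ at $s=1/2$, so ``specializing to $s=1/2$'' is not justified as written, and the reason you give for holomorphy there is not correct.

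The standard repair, which is precisely the argument of \cite[p.67]{JR}, avoids holomorphy altogether: rationality of $Z(v,s)$ in $q^{-s}$ together with your equivariance identity $Z(\pi(h)v,s)=|\det g_2/\det g_1|^{s-1/2}Z(v,s)$ show that if $k$ denotes the maximal order of pole at $s=1/2$ as $v$ ranges over $V_\pi$, then $\ell(v):=\lim_{s\to 1/2}(s-1/2)^{k}Z(v,s)$ is a well-defined linear form, it is $H_{2n}$-invariant because the twisting character $|\det g_2/\det g_1|^{s-1/2}$ equals $1$ at $s=1/2$, and it is nonzero for some $v$ by the very choice of $k$. This also supersedes your nonvanishing argument via a vector ``supported near $I_{2n}$'', which as stated needs justification: one cannot freely prescribe the support of $g\mapsto\Lambda(\pi(\diag(g,I_n))v)$ without an additional lemma on restrictions of Shalika functions. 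With the leading-coefficient device in place of your last paragraph, your proof becomes the one the paper refers to.
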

\begin{proof}
This follows from the argument of \cite[p.67]{JR} and \cite{FJ} which is for all local fields of characteristic zero. 
\end{proof}

 For $\pi$ a representation of $G$, we say that $f:G\to \BC$ is a matrix coefficient of $\pi$ if there are $v\in V_{\pi}$ and $v^\vee$ in the space $V_{{\pi}^\vee}$ of the contragredient representation of $\pi^\vee$ of $\pi$, such that for all $g\in G$ we have 
\[c(g)=\langle \pi(g)v, v^\vee\rangle.\] We denote by $\CA_2(G)$ the subspace of $\sm(G)$ generated by matrix coefficients of (irreducible) square integrable representations of $G$. We recall the following inclusion from \cite[Corollaire III.1.2]{WPL}.

\begin{prop}\label{prop WHC}
Let $G$ be a reductive group defined over $F$, then \[\Res_{G^1}(\CA_2(G))\subseteq \CC(G^1).\] 
\end{prop}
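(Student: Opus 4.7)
The plan is to reduce, by linearity and $K$-biinvariance, the claim to a pointwise estimate for a single matrix coefficient on a positive Weyl chamber, and then to derive that estimate by combining Casselman's square-integrability criterion with the standard lower bound on Harish-Chandra's spherical function.

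By the definition of $\CA_2(G)$, it suffices to treat a single coefficient $f(g)=\langle\pi(g)v,v^\vee\rangle$ with $\pi\in\usq(G)$, $v\in V_\pi$, and $v^\vee\in V_{\pi^\vee}$. Choosing a compact open subgroup $K\leq G$ fixing both $v$ and $v^\vee$, one has $f\in\sm(G)_K$, hence $\Res_{G^1}(f)\in\sm(G^1)_K$. It then remains to show that for every $d\in\BR$,
\[ |f(x)|\prec \Xi(x)(1+\sigma(x))^{-d}\qquad\text{on }G^1, \]
where $\Xi$ and $\sigma$ are attached to the reductive group $G^1$. Using the Cartan decomposition $G^1=K_0(A\cap G^1)^+ K_0$ for a maximal $F$-split torus $A$ in a minimal parabolic $P=MN$ of $G$ and the positive Weyl chamber $(A\cap G^1)^+$ attached to $\Delta(P,A)$, the $K$-biinvariance of $f$ together with the compactness of $K_0$ reduces the problem to bounding $|f(a)|/\Xi(a)$ on $(A\cap G^1)^+$ outside a compact set.

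Invoking Casselman's theory of the canonical pairing and Jacquet-module asymptotics, sufficiently far in the chamber one has a finite expansion
\[ f(a)=\sum_\chi \chi(a)\,p_\chi(a), \]
where $\chi$ ranges over the exponents of $\pi$ along $P$ and each $p_\chi$ is a polynomial in the quantities $\log|\alpha(a)|_F$ for $\alpha\in\Delta(P,A)$. The Casselman square-integrability criterion states precisely that every such exponent satisfies $|\chi(a)\delta_P(a)^{-1/2}|\leq e^{-\varepsilon\sigma(a)}$ on $(A\cap G^1)^+$ for some $\varepsilon=\varepsilon(\chi)>0$. Combining this with the standard lower bound $\Xi(a)\succ \delta_P(a)^{1/2}$ on the chamber (which follows from the Iwasawa integral representation of $\Xi$), one obtains $|f(a)|/\Xi(a)\prec e^{-\varepsilon\sigma(a)}p(\sigma(a))$ for a polynomial $p$, and this is dominated by $(1+\sigma(a))^{-d}$ for every $d\in\BR$.

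The main technical hurdle is the effective form of the asymptotic expansion, with uniform control of the polynomial terms $p_\chi$ on a translate of $(A\cap G^1)^+$, together with the passage from Casselman's strict inequality to a uniform exponential bound in $\sigma$; both ingredients are precisely what is developed in Chapter~III of Waldspurger's paper \cite{WPL}, from which Corollaire~III.1.2 is then extracted. No additional structure specific to our symmetric-pair setting is needed at this stage, which is why the statement is a purely group-theoretic input invoked later in the paper.
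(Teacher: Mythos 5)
Your argument is correct and coincides with what the paper relies on: the paper gives no independent proof but simply quotes \cite[Corollaire III.1.2]{WPL}, and your sketch (Cartan decomposition of $G^1$, Casselman's canonical pairing and asymptotic expansion along the chamber, the square-integrability criterion upgraded to a uniform exponential bound, and the lower bound $\Xi\succ\delta_P^{1/2}$ on the chamber) is precisely the standard Casselman--Harish-Chandra argument carried out in Chapter III of Waldspurger, from which that corollary is extracted. So there is nothing to object to, beyond noting that the two technical points you flag (uniformity of the expansion and the passage to exponential decay in $\sigma$) are exactly where the cited reference does the work.
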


In view of \cite[Lemme II.4.4]{WPL}, we obtain the following consequence.

\begin{cor}\label{cor res schw}
Let $f\in \CA_2(G_n)$, then for any $1\leq k \leq n$, the map 
\[g\in \GL_k(E)^1\to f(\diag(g,I_{n-k}))\] belongs to $\CC(\GL_k(E)^1)$. 
\end{cor}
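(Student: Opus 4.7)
The plan is to deduce this corollary by combining Proposition \ref{prop WHC} with the restriction property for Harish-Chandra Schwartz functions along a Levi embedding, namely \cite[Lemme II.4.4]{WPL}, which is the hint provided in the statement.

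First I would apply Proposition \ref{prop WHC} to $f\in \CA_2(G_n)$, which immediately yields that $\Res_{G_n^1}(f)\in \CC(G_n^1)$. Next, consider the embedding $\iota_k:\GL_k(E)\hookrightarrow G_n$ given by $g\mapsto \diag(g,I_{n-k})$. Since $\det(\iota_k(g))=\det(g)$, this map sends $\GL_k(E)^1$ into $G_n^1$. Thus the function \[g\in \GL_k(E)^1\mapsto f(\diag(g,I_{n-k}))\] is well-defined as the pullback along $\iota_k$ of $\Res_{G_n^1}(f)$.

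It remains to verify that this pullback lies in $\CC(\GL_k(E)^1)$, and this is exactly the content of \cite[Lemme II.4.4]{WPL}: that lemma compares the spherical coefficient $\Xi^{G_n}$ and the log-norm $\sigma^{G_n}$ with their analogues on the standard Levi $M_{k,n-k}(E)=\GL_k(E)\times \GL_{n-k}(E)$, and in particular provides estimates of the form $\Xi^{G_n}\circ \iota_k\prec \Xi^{\GL_k(E)}$ (up to an appropriate modulus factor that is trivial on $\GL_k(E)^1$) and a compatible comparison of log-norms. Combining these with the defining estimate $\Res_{G_n^1}(f)\prec \Xi^{G_n}N_{-d}^{G_n}$ valid for every $d\in \BR$ yields the required estimate $\Res_{G_n^1}(f)\circ \iota_k\prec \Xi^{\GL_k(E)}N_{-d}^{\GL_k(E)}$ on $\GL_k(E)^1$ for every $d$. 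Together with the fact that the pulled-back function remains right-invariant by a compact open subgroup (the preimage of the fixator of $f$ under $\iota_k$), this gives membership in $\CC(\GL_k(E)^1)$.

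The only subtle point—barely an obstacle—is ensuring that the comparison of log-norms and $\Xi$-functions in \cite[Lemme II.4.4]{WPL}, which is stated for Levi subgroups, can be applied to the subgroup $\iota_k(\GL_k(E))$; but since this subgroup is the intersection with $G_n^1$ of the direct factor $\GL_k(E)\times\{I_{n-k}\}$ of the Levi $M_{k,n-k}(E)$, and the center of the complementary factor acts trivially, no further work is required beyond a direct invocation.
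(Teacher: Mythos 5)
Your argument is correct and is essentially the paper's own proof: the corollary is obtained exactly by combining Proposition \ref{prop WHC} with the comparison of $\Xi$ and the log-norm along the Levi $M_{k,n-k}$ from \cite[Lemme II.4.4]{WPL}, the modulus factor and the polynomial losses being harmless on $\GL_k(E)^1$ for the reasons you give. Nothing further is needed.
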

 
We recall from \cite[III.17]{WPL} (see also \cite[Section 2.2, discussion after (2.2.6)]{BPlocal} for more details on this claim) that if $\pi\in \temp(G)$, the natural action of 
$\sm_c(G)$ on $\pi$ extends to $\CC(G)$, and for $f\in \CC(G)$ and $v\in V_\pi$, the vector 
$\pi(f)v$ is characterized by the fact that $\langle \pi(f)v,v^\vee \rangle=\int_G f(g)\langle \pi(g) v, v^\vee \rangle $ for all $v^\vee\in V_\pi^\vee$, where the integrals are absolutely convergent. Let $f\in  \CC(G)$, this implies, by considering a compact open subgroup $K$ such that 
$f\in \CC(G)_K$, that \[\lambda(\pi(f)v)=\int_G f(g)\lambda(\pi(g)v)dg\] for all $v\in V_\pi$ and all $\lambda$ in the algebraic dual of $V_\pi$. Further, by convoluting $f$ by an apporpriate function of $\sm_c(G)$ on the left which is permissible according to \cite[Lemme III.6.1]{WPL}, we obtain for all $x\in G$: \begin{equation}\label{eq sakz}\lambda(\pi(x^{-1})\pi(f)v)=\int_G f(xg)\lambda(\pi(g)v)dg.\end{equation} We shall use this fact.
 
When $P=MU$ is a parabolic subgroup of an $F$-reductive group $G$ with Levi subgroup $M$ and unipotent radical $U$, we denote by $A_M$ the central split component of $M$. For $\pi$ a finite length representation of $G$, we denote by $(V_\pi)_P$ its normalized Jacquet module with respect to $P$, and by $V_\pi(U)$ the kernel of the projection $V_\pi\to (V_\pi)_P$. We denote by \[\CE(A_M ,(V_\pi)_P)\] the set of central exponents of $A_M$ in $(V_{\pi})_P$, i.e. the central characters of the irreducible subquotients of $(V_\pi)_P$ restricted to $A_M$. 

For $\chi:E^\times\to \BC$ a character, we denote by $\Re(\chi)$ the unique real number $r$ such that $|\chi(x)|=|z|_E^r$ for all $z\in E^\times$, and call it the real part of $\chi$. We say that $\chi$ is positive if $r>0$. If $\pi\in \usq(G_n)$, it follows from Casselman's criterion \cite[Theorem 4.4.6]{Cas} as explicated in \cite[before Theorem 3.2]{MatRT} that for $1\leq k\leq n$ and $\chi\in \CE(Z_{M_{k,n-k}},(V_\pi)_{P_{k,n-k}})$, the character \[z_k\in E^\times \to \chi(\diag(z_kI_k,I_{n-k}))\] is positive.

\subsection{Tempered symmetric pairs}\label{sec TP}

Here $F, G$ and $H$ are as in Section \ref{sec sym} again. In \cite[Definition 1.3]{CZlocal}, Zhang defines the notion of \textit{very strongly discrete} symmetric pair $(G,H)$, by the requirement that the integral \[\int_{Z_H\backslash H} f(h)dh\] is absolutely convergent for all $f\in \CC(Z\backslash G)$. By \cite[Section 3.2]{CZlocal}, the pair $(G,H)$ is very strongly discrete if and only if there exists a natural integer $d$ such that  
\[\int_{Z\cap H\backslash H}\Xi(h)N_{-d,*}(h)dh<+\infty.\] 
Now by \cite[Lemma 2.7.1]{BPGalsq}, the above characterization of very strongly discrete symmetric pairs shows that the very strongly discrete symmetric pairs of Zhang are the same as the \textit{tempered symmetric pairs} of Beuzart-Plessis as defined in \cite[Section 2.7]{BPGalsq}. We decide to opt for the terminology tempered symmetric pair. Indeed, the following implication is proved in 
\cite[Proposition 2.7.1]{BPGalsq} (its converse has also been proven already but not yet published by Beuzart-Plessis).

\begin{prop} 
If $(G,H)$ is a tempered symmetric pair, then the class of Plancherel measures of the unitary representation $L^2(H\backslash G)$ of $G$ is supported on $\temp(G)$. 
\end{prop}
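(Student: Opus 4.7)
\emph{Proof plan.} The strategy is to invoke Harish-Chandra's $\Xi$-criterion: an irreducible unitary representation $\pi$ of $G$ lies in $\temp(G)$ if and only if its diagonal matrix coefficients $g\mapsto\langle\pi(g)v,v\rangle_\pi$ are weakly bounded by scalar multiples of $\Xi$. Given $\pi$ in the support of the Plancherel measure of $L^2(H\backslash G)$, weak containment of $\pi$ in $L^2(H\backslash G)$ yields a uniform approximation on compacta of every such diagonal matrix coefficient by positive-type functions of the form
\[
\Phi_F(g):=\langle R(g)\,P_H F,\,P_H F\rangle_{L^2(H\backslash G)},\qquad F\in \sm_c(G),
\]
where $P_H F(x):=\int_{(Z\cap H)\backslash H} F(hx)\,dh\in \sm_c(H\backslash G)$. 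Hence it is enough to produce, for each such $F$, a weak bound of the form $|\Phi_F(g)|\leq C_F\,\Xi(g)$.

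A direct Fubini computation rewrites
\[
\Phi_F(g)=\int_{(Z\cap H)\backslash H} (F*F^*)(hg)\,dh,\qquad F^*(x):=\overline{F(x^{-1})},
\]
while the Harish-Chandra--Plancherel formula applied to $L^2(Z\backslash G)$ expresses
\[
(F*F^*)(x)=\int_{\temp(G)} \Tr\bigl(\pi'(x)\pi'(F)\pi'(F)^*\bigr)\,d\mu_G(\pi'),
\]
each integrand being individually weakly $\Xi$-bounded by a classical estimate for tempered matrix coefficients. Using sub-multiplicative estimates of the form $\Xi(hg)\prec \Xi(h)\Xi(g)N_{d_0,*}(g)$, combined with the temperedness hypothesis
\[
\int_{(Z\cap H)\backslash H}\Xi(h)N_{-d,*}(h)\,dh<+\infty,
\]
one can interchange the $(Z\cap H)\backslash H$-integral and the Plancherel integral, obtaining the desired weak bound $|\Phi_F(g)|\leq C_F\,\Xi(g)$. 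Passing to limits on compact sets transfers this bound to the diagonal matrix coefficients of $\pi$, whence $\pi\in\temp(G)$.

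The main obstacle is the rigorous justification of the interchange of integrals: this requires uniform-in-$\pi'$ estimates on the Plancherel integrand of the form $|\Tr(\pi'(x)\pi'(F)\pi'(F)^*)|\leq C_F(\pi')\,\Xi(x)\,N_{-d_1}(x)$ with constants $C_F(\pi')$ that remain integrable against the Plancherel density of $L^2(Z\backslash G)$. Such estimates are standard consequences of the Harish-Chandra Schwartz-algebra formalism recalled in Section \ref{sec prel}, and the temperedness of $(G,H)$ provides precisely the $H$-integrability of a polynomially-shifted $\Xi$ needed to dominate the Plancherel integrand and apply Fubini together with dominated convergence, thereby closing the argument.
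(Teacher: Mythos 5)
First, a point of comparison: the paper does not actually prove this proposition — it is quoted from Beuzart-Plessis \cite[Proposition 2.7.1]{BPGalsq} — so you are attempting a self-contained argument. Your overall architecture (bound the coefficients of the dense subspace $\sm_c(H\backslash G)$ and use weak containment) is indeed the right one, but two steps fail as written. The most serious is the last one: weak containment of $\pi$ in $L^2(H\backslash G)$ only gives approximation of the diagonal coefficients of $\pi$ by (sums of) the $\Phi_F$ \emph{uniformly on compacta}, with constants $C_F$ depending on $F$; a family of bounds $|\Phi_F|\leq C_F\,\Xi$ with $F$-dependent constants does not pass to the limit, so you cannot transfer any pointwise $\Xi$-bound to the coefficients of $\pi$ this way. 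The correct way to close the argument is not to estimate coefficients of $\pi$ at all: once one knows $|\Phi_F|\prec \Xi N_{d}$ for every $F$ in the dense subspace, the Cowling--Haagerup--Howe criterion gives that $L^2(H\backslash G)$ is weakly contained in the regular representation $L^2(G)$; since the support of the Plancherel measure consists of irreducible representations weakly contained in $L^2(H\backslash G)$, these are then weakly contained in $L^2(G)$, i.e. tempered. (Note also that the temperedness criterion allows a polynomial factor, $\prec \Xi N_d$, not a plain multiple of $\Xi$.)

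The analytic core also contains a false inequality: the ``sub-multiplicative estimate'' $\Xi(hg)\prec \Xi(h)\Xi(g)N_{d_0,*}(g)$ does not hold — test $h=g^{-1}$, where the left side is $\Xi(e)=1$ while the right side decays exponentially along a split torus. The correct tools are the integrated identity $\int_{K_0}\Xi(g_1kg_2)\,dk=\Xi(g_1)\Xi(g_2)$ and, for a tempered pair, a uniform estimate of the shape $\int_{Z_H\backslash H}\Xi(hg)N_{-d}(hg)\,dh\prec \Xi(g)N_{d'}(g)$; establishing this estimate is precisely the nontrivial content behind \cite[Proposition 2.7.1]{BPGalsq}, and it does not follow from Fubini plus the hypothesis $\int_{Z_H\backslash H}\Xi(h)N_{-d,*}(h)\,dh<+\infty$ alone. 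Relatedly, the detour through the Harish-Chandra--Plancherel expansion of $F*F^*$ on $Z\backslash G$ is unnecessary and the ``uniform in $\pi'$'' trace estimates you postulate are left unjustified: since $F*F^*\in\sm_c(G)\subseteq \CC(G)$ one already has $|F*F^*|\prec \Xi N_{-d}$ for every $d$, which is all that is needed once the integrated estimate over $H$ is available. Finally, the identity $\Phi_F(g)=\int_{Z_H\backslash H}(F*F^*)(hg)\,dh$ should be rechecked: the honest computation places $h$ and $g$ on opposite sides of the integration variable, which is harmless for the estimates but not literally what you wrote.
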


 It is proved in \cite[Section 3.2.1]{CZlocal} that whenever $E/F$ is a quadratic extension, $H$ is an $F$-reductive group, and $G$ is the $F$-points of the Weil restriction of scalars from $E$ to $F$ of $H$, then the Galois symmetric pair $(G,H)$ is tempered. Together with \cite[Remark 3.4]{CZlocal} and \cite[Corollary 5.16]{GO}, this implies that the symmetric pairs $(G_n,H_n)$ of interest to us are always tempered. 

We now assume that the symmetric pair $(G,H)$ is tempered, and we set $Z_H:=Z\cap H$. We moreover assume that $Z_H\backslash Z$ is compact. In this situation the integral $\int_{Z_H\backslash H} f(h)dh$ is a absolutely convergent for all $f\in \CC(Z_H\backslash G)$. In particular we have a projection map 
\[p_H: \CC(Z_H\backslash G)\to \sm(H\backslash G)\] defined by 
\[p_H(g)=\int_{Z_H\backslash H} f(hg)dh.\] We observe that $(Z_H\backslash G,Z_H\backslash H)$ is also a tempered symmetric pair, and that the spaces $\CC(H\backslash G)$ and $\CC(\frac{H}{Z_H}\backslash \frac{G}{Z_H})$ tautologically identify. The following fact follows from \cite[Lemma 4.3]{CZlocal} applied to the tempered symmetric pair $(Z_H\backslash G,Z_H\backslash H)$.  

\begin{prop}\label{prop CZ}
Let $(G,H)$ be a tempered symmetric pair such that $Z_H\backslash Z$ is compact, then the projection $p_H$ sends the Harish-Chandra Schwartz space $\CC(Z_H\backslash G)$ to $\CC(H\backslash G)=\CC(\frac{H}{Z_H}\backslash \frac{G}{Z_H})$.
\end{prop}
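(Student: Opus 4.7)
The strategy is to reduce the statement to a direct application of \cite[Lemma 4.3]{CZlocal} applied to the pair $(G',H'):=(Z_H\backslash G, Z_H\backslash H)$. The key virtue of passing to this quotient pair is that $Z(G')=Z_H\backslash Z$ becomes compact by hypothesis, which removes the central-averaging complications and puts us exactly in the framework where Zhang's lemma is formulated. I would first unfold the tautological identifications: since $Z_H\subseteq H$, one has an equality of coset spaces $H\backslash G=H'\backslash G'$, which gives an identification of $\CC(H\backslash G)$ with $\CC(H'\backslash G')=\CC(\frac{H}{Z_H}\backslash \frac{G}{Z_H})$, and similarly $\CC(Z_H\backslash G)$ is nothing but $\CC(G')$. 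Under these identifications, the projection $p_H$ becomes the standard averaging map $f\mapsto \int_{H'} f(hg)\,dh$ on $G'$.

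Next I would verify that $(G',H')$ is still a tempered symmetric pair. The involution $\theta$ descends to $G'$ because $Z_H\subseteq Z$ is pointwise $\theta$-fixed, and its fixed subgroup is exactly $H'$. For temperedness, the characterization from \cite[Section 3.2]{CZlocal} requires the finiteness of $\int_{Z\cap H\backslash H}\Xi(h)N_{-d,*}(h)\,dh$, which is literally the same integral whether one views $H$ as a subgroup of $G$ or $H'$ as a subgroup of $G'$, since $Z\backslash G=Z\backslash G'$ and the log-norm $\sigma_*$ on $Z\backslash G$ is unchanged. Hence $(G',H')$ is tempered, with the additional feature that its ambient center is compact.

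At this point \cite[Lemma 4.3]{CZlocal} applies directly to $(G',H')$ and yields $p_H(\CC(G'))\subseteq \CC(H'\backslash G')$, which is exactly the asserted conclusion. The only mild verification to keep in mind is a log-norm bookkeeping check: the function $\sigma^{H'\backslash G'}$ used to define $\CC(H'\backslash G')$ agrees, up to a bounded additive discrepancy, with the pullback of $\sigma^{H\backslash G}$, the discrepancy being absorbed by the compact quotient $Z_H\backslash Z$; this is harmless for the polynomial decay estimates that cut out Harish-Chandra Schwartz space. This is the main (and essentially only) obstacle, and it is handled automatically by Zhang's setup. No other step is delicate, since absolute convergence of $p_H(f)$ is already granted by the tempered symmetric pair property invoked right before the statement.
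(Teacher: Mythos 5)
Your proposal is correct and follows essentially the same route as the paper: the paper's proof consists precisely of observing that $(Z_H\backslash G, Z_H\backslash H)$ is again a tempered symmetric pair, tautologically identifying $\CC(H\backslash G)$ with $\CC(\frac{H}{Z_H}\backslash \frac{G}{Z_H})$, and then citing \cite[Lemma 4.3]{CZlocal} for that quotient pair. Your additional checks (descent of the involution, invariance of the temperedness integral, and the log-norm bookkeeping) are exactly the verifications the paper leaves implicit.
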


\begin{rem}
The above proposition applies to $(G_n, H_n)$. 
\end{rem}

\subsection{The Whittaker Harish-Chandra Schwartz space}

Here $F$ is a non Archimedean local field and $G$ an $F$-reductive group. We fix $U_0$ to be the unipotent radical of a minimal parabolic subgroup $P_0$ of $G$. We fix a Levi subgroup $M_0$ of $P_0$. We denote by $\d_0$ the modulus character of $P_0$. We denote by $\mathfrak{a}_0$ the tensor product over $\BZ$ of $\BR$ with the lattice of algebraic characters of $M_0$. We denote by $H_0:M_0\to \mathfrak{a}_0$ the function 
defined in \cite[p. 240]{WPL}. Finally we fix $\psi:U_0\to \BC^\times$ a non-degenerate character as in \cite[Section 3.1]{DWhit}. In \cite{DWhit}, Delorme defines the Whittaker Harish-Chandra Schwartz space $\CC(U_0\backslash G,\psi)$ as the subspace of the smooth induced representation 
$\Ind_{U_0}^G(\psi)$ of functions satisfying 
\[f\prec \delta_0^{1/2}(1+H_0)^{-d}\] for any positive integer $d$. The following lemma is an immediate consequence of \cite[Proposition II.4.5]{WPL}, \cite[Inequality (6) p.242, before Section I.2]{WPL} and the Iwasawa decomposition.

\begin{lem}\label{lm proj to WHCS space}
For $f\in \CC(G)$, the map $\int_{U_0} f(u)\psi^{-1}(u)du$ is absolutely convergent. Moreover the map 
\[W_f:g\mapsto \int_{U_0} f(ug)\psi^{-1}(u)du\] belongs to $\CC(U_0\backslash G,\psi)$. 
\end{lem}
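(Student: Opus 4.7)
The plan has two stages: first establish pointwise absolute convergence of the defining integral, then upgrade this into the Harish-Chandra Schwartz estimate $W_f\prec\delta_0^{1/2}(1+H_0)^{-d}$ by inserting the Iwasawa decomposition.

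For absolute convergence, fix $f\in\CC(G)$, so that $|f|\leq C_d\,\Xi N_{-d}$ for every $d\in\BR_{\geq 0}$. The cited subadditivity inequality (6) before Section I.2 in \cite{WPL}, namely $\sigma(xy)\leq\sigma(x)+\sigma(y)$, gives
\[
(1+\sigma(ug))^{-d}\leq(1+\sigma(u))^{-d}(1+\sigma(g))^{d},
\]
and combined with the standard $K_0$-biinvariant comparison for $\Xi$, this bounds $|f(ug)|$ uniformly in $g$ (with a constant depending on $g$) by $\Xi(u)(1+\sigma(u))^{-d}$ times a polynomial factor in $\sigma(g)$. The finiteness of $\int_{U_0}\Xi(u)(1+\sigma(u))^{-d}\,du$ for $d$ large enough is precisely \cite[Proposition II.4.5]{WPL}, so the integral defining $W_f(g)$ is absolutely convergent. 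The $\psi$-equivariance $W_f(u_0g)=\psi(u_0)W_f(g)$ is then an immediate change of variable $u\mapsto u u_0^{-1}$, and smoothness in $g$ is inherited from that of $f$ by dominated convergence using the same majoration.

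For the Schwartz bound, invoke the Iwasawa decomposition $G=U_0M_0K_0$ and write $g=u_g m k$ with $u_g\in U_0$, $m\in M_0$, $k\in K_0$, so that $|W_f(g)|=|W_f(mk)|$. Apply the change of variable $u\mapsto m u m^{-1}$, whose Jacobian is a fixed power of $\delta_0(m)$, to transform the integral into an expression of the form $\delta_0(m)^{\pm 1}\int_{U_0}f(mu k)\,du$ (up to a unitary $\psi$-twist which does not affect the absolute value). Combining the Schwartz bound on $f$ with the standard behaviour of $\Xi$ under left translation by an element of $M_0$ (which produces the $\delta_0^{1/2}(m)$ factor), and using inequality (6) once more to trade an arbitrary factor $(1+\sigma(u))^{-D}$ for $(1+\sigma(m))^{-d}$, one arrives at
\[
|W_f(mk)|\leq C\,\delta_0^{1/2}(m)(1+\sigma(m))^{-d}\int_{U_0}\Xi(u)(1+\sigma(u))^{-d_0}\,du,
\]
with the last integral finite by Proposition II.4.5. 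Since $\sigma(m)$ is equivalent to $\|H_0(m)\|$ on $M_0$, this is the required estimate $|W_f(g)|\prec\delta_0^{1/2}(m)(1+\|H_0(m)\|)^{-d}$ defining the Whittaker Harish-Chandra Schwartz space.

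The only delicate point is bookkeeping: correctly tracking the power of $\delta_0(m)$ introduced by the conjugation change of variable and the $\Xi$-asymptotics so that precisely one factor $\delta_0^{1/2}(m)$ survives, and transferring $\sigma$-decay into $H_0$-decay using the equivalence of these two functions on $M_0$. This is routine once Proposition II.4.5 and the Iwasawa estimates of Section I.2 of \cite{WPL} are in hand, which is why the statement is described above as an immediate consequence of the cited results.
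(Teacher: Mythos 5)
Your first stage is fine: the pointwise bound $|f(ug)|\prec_g \Xi(u)(1+\sigma(u))^{-d}$, the convergence of $\int_{U_0}\Xi(u)(1+\sigma(u))^{-d}du$ for $d$ large, the $\psi$-equivariance and the invariance of $W_f$ under a compact open subgroup all come out correctly, and your final displayed estimate is indeed the one that characterizes $\CC(U_0\backslash G,\psi)$. The gap is in how you produce it. After your conjugation $u\mapsto mum^{-1}$ (whose Jacobian is $\delta_0(m)^{+1}$, not an unresolved $\delta_0(m)^{\pm1}$), what you must show is $\int_{U_0}\Xi(mu)(1+\sigma(mu))^{-D}du\prec\delta_0(m)^{-1/2}(1+\sigma(m))^{-d}$ uniformly in $m\in M_0$. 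You attribute the appearance of the $\delta_0$-power to ``the standard behaviour of $\Xi$ under left translation by an element of $M_0$, which produces the $\delta_0^{1/2}(m)$ factor''. There is no such standard pointwise translation property of $\Xi$; moreover a factor $\delta_0^{1/2}(m)$ combined with your Jacobian $\delta_0(m)$ would yield $\delta_0^{3/2}(m)$, not $\delta_0^{1/2}(m)$, so the bookkeeping you yourself flag as the delicate point does not close as written. What your route actually requires is a pointwise inequality of the shape $\Xi(mu)\prec\delta_0(m)^{-1/2}\Xi(u)(1+\sigma(m)+\sigma(u))^{N}$, which is not among the cited tools and is not a quotable standard fact; the subadditivity of $\sigma$ (inequality (6)) only controls log-norms and cannot produce any power of $\delta_0(m)$.

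The missing input is precisely the full strength of \cite[Proposition II.4.5]{WPL}, which is an \emph{integrated} statement with the Levi variable inside the integral: for $d$ large there is $d'$ (arbitrarily large with $d$) such that $\int_{U_0}\Xi(um)(1+\sigma(um))^{-d}du\prec\delta_0(m)^{1/2}\,\Xi^{M_0}(m)\,(1+\sigma(m))^{-d'}$, with $\Xi^{M_0}\asymp 1$ because $M_0$ is the minimal Levi. With this in hand no conjugation is needed: write $g=u_gmk$ by Iwasawa, so $|W_f(g)|=|W_f(mk)|\leq\int_{U_0}|f(umk)|du\prec\int_{U_0}\Xi(um)(1+\sigma(um))^{-d}du$ (uniformly in $k\in K_0$, since $\Xi$ and $\sigma$ are controlled under right translation by a fixed compact set), and the proposition gives the bound $\delta_0^{1/2}(m)(1+\sigma(m))^{-d'}$ at once; inequality (6) and the comparison of $\sigma$ with $\|H_0\|$ on $M_0$ are only needed to phrase the decay as in Delorme's definition. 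In your write-up you invoke only the $m=1$ case of Proposition II.4.5 (convergence of $\int_{U_0}\Xi(u)(1+\sigma(u))^{-d}du$) and try to manufacture the $m$-dependence by hand; that uniform $m$-dependence is exactly the non-trivial content of the proposition, so the central estimate of the lemma is asserted rather than proved.
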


\section{The abstract Fourier inversion formula for symmetric spaces}\label{sec BFIF}

Here $F$ is a non Archimedean local field of characteristic different from two. We state a consequence of the main result of \cite{BernPL} for Gelfand symmetric pairs. 

\begin{thm}\label{thm Bern}
Let $(G,H)$ be a Gelfand symmetric pair and let $\mu$ belong to the class of Plancherel measures of the right regular representation $L^2(H\backslash G)$. Then $\mu$ is supported on a certain subspace $\Htemp(G)$ of $\udist(G)$. It has the property that for each representation $\pi\in \Htemp(G)$, there is a generator 
$\lambda_{\pi}$ of $\Hom_H(\pi,\BC)$ such that:
\begin{enumerate}
\item if $f\in \CC(H\backslash G)$ and $v\in \pi$, the integral \[\langle f , v\rangle_{\lambda_{\pi}}:=\int_{H\backslash G} f(g)\overline{\lambda_{\pi}(\pi(g)v)}dg\] is absolutely convergent,
\item if $K$ is a compact open subgroup of $G$ fixing $f$ on the right, then for any choice of orthonormal basis $B_{\pi}^K$ of $V_\pi^K$, one has: 
\[f(He)=\int_{\Htemp(G)} \sum_{v\in B_{\pi}^K}\langle f , v\rangle_{\lambda_{\pi}}\lambda_{\pi}(v) d\mu(\pi).\] 
\end{enumerate}
\end{thm}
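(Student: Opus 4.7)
The plan is to deduce the theorem from the abstract Plancherel decomposition of \cite{BernPL} applied to the spherical variety $H\backslash G$. This decomposition is available in our setting because $H\backslash G$ has polynomial growth in Bernstein's sense, as recalled in Section \ref{sec HC sym}.

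First, I would translate Bernstein's disintegration
\[
L^2(H\backslash G) \simeq \int_{\Irr(G)}^{\oplus} \Pi \otimes \CM_\pi \, d\mu(\pi)
\]
into the present formulation. In Bernstein's framework the multiplicity space $\CM_\pi$ embeds canonically into $\Hom_H(V_\pi,\BC)$ via an ``evaluation at the base point'' map; the Gelfand hypothesis then forces $\dim \CM_\pi \leq 1$, so that the support of $\mu$ lies in $\udist(G)$. Define $\Htemp(G)$ to be this support. For each $\pi \in \Htemp(G)$, the one-dimensionality of $\Hom_H(V_\pi,\BC)$ selects, up to a phase, a canonical generator $\lambda_\pi$ via its compatibility with the Hilbertian normalization coming from the direct integral.

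For (1), I would verify absolute convergence by combining the Harish-Chandra decay $|f| \prec \Xi^{H\backslash G} N_{-d}^{H\backslash G}$, valid for every $d$, with a bound of the shape $|\lambda_\pi(\pi(g)v)| \prec \Xi^{H\backslash G}(Hg) N_{d_0}^{H\backslash G}(Hg)$ for some $d_0 = d_0(\pi,v)$; such a bound follows from unitarity of $\pi$ together with the standard continuous extension of $\lambda_\pi$ to a Schwartz-type space of smooth vectors in the distinguished representation $\Pi$. Choosing $d$ large enough relative to $d_0$ then yields integrability against $(\Xi^{H\backslash G})^2 N_{-d''}^{H\backslash G}$ for some large $d''$, which is integrable on $H\backslash G$ by \cite[Lemma 2.1]{DH}. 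The formula (2) is the specialization at $g = He$ of Bernstein's reproducing-kernel identity for $L^2(H\backslash G)$: the $K$-invariance of $f$ collapses the inner direct integral over $\Pi$ in the spectral expansion of $f$ to a finite sum over the orthonormal basis $B_\pi^K$ of the finite-dimensional space $V_\pi^K$, producing the stated expression.

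The main obstacle, which is taken care of within \cite{BernPL}, is the $\mu$-measurable selection of the family $\pi \mapsto \lambda_\pi$ implementing $\CM_\pi \hookrightarrow \Hom_H(V_\pi,\BC)$ coherently across $\mu$-almost every $\pi$; Gelfand uniqueness is precisely what guarantees that a canonical choice compatible with the Hilbertian normalization of the direct integral exists in our situation.
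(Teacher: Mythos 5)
Your overall route is the same as the paper's (Bernstein's abstract Plancherel theory for $H\backslash G$, polynomial growth, the Gelfand hypothesis giving multiplicity one and the functional $\lambda_\pi$ via Frobenius reciprocity), but there is a genuine gap at the analytic heart of part (1). You claim a pointwise bound $|\lambda_\pi(\pi(g)v)|\prec \Xi^{H\backslash G}(Hg)\,N_{d_0}^{H\backslash G}(Hg)$ and assert that it ``follows from unitarity of $\pi$ together with the standard continuous extension of $\lambda_\pi$.'' This is not a consequence of unitarity: unitarity controls ordinary matrix coefficients, not $H$-invariant functionals, and such a relative-temperedness bound for generalized matrix coefficients is false for a general $\pi\in\udist(G)$ --- it is precisely the delicate point that Bernstein's theory must supply, and only for $\mu$-almost all $\pi$. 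What polynomial growth of $H\backslash G$ actually gives is weaker and different in nature: $\Htemp(G)$ has to be \emph{defined} as the set of $H$-tempered representations, i.e.\ those $\pi$ for which the spectral map $\alpha_\pi:\sm_c(H\backslash G)\to\Pi$ extends continuously to $\CC(H\backslash G)$, and Bernstein asserts that $\mu$ is supported there; for such $\pi$ the adjoint $\beta_\pi(v)(Hg)=\lambda_\pi(\pi(g)v)$ has moderate growth only in the weighted-$L^2$ sense (some $N^{H\backslash G}_{d}$-twist of it lies in $L^2(H\backslash G)$), and the absolute convergence in (1) then follows by Cauchy--Schwarz against the rapid decay of $f$, not from a pointwise $\Xi$-type majorization. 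Defining $\Htemp(G)$ merely as ``the support of $\mu$'' and invoking an unjustified pointwise bound skips exactly the input you need.

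A second, related gap concerns (2). The adjunction $\langle\alpha_\pi(f),v\rangle_\pi=\int_{H\backslash G}f\,\overline{\lambda_\pi(\pi(\cdot)v)}$ is only given for $f\in\sm_c(H\backslash G)$ and must be extended to $f\in\CC(H\backslash G)$ (e.g.\ by truncating $f$ by characteristic functions of an exhausting family of compacts and using continuity of $\alpha_\pi$ together with dominated convergence); without this, the coefficients $\langle f,v\rangle_{\lambda_\pi}$ appearing in your formula are not the spectral coefficients of $f$. Moreover Bernstein's inversion identity holds a priori only weakly, as an identity in the Hermitian dual of $\CC(H\backslash G)$, so the pointwise evaluation at $He$ is not a formal ``specialization of a reproducing-kernel identity'': one has to test the identity against a suitable multiple of the characteristic function of $H\backslash HK$ and use the right $K$-invariance of $f$ (and hence of each spectral component) to recover the value at the base point, after which the expansion over the finite orthonormal basis $B_\pi^K$ gives the stated sum. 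These steps can certainly be carried out, but they are the substance of the proof and are missing from your argument as written.
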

\begin{proof}
According to \cite{BernPL}, and as further detailed in \cite[Section 3.3]{WWL}, there exists a pointwise defined (see  \cite[Definition 3.1]{WWL}) map 
\[\alpha:\sm_c(H\backslash G)\to \int_{\uni(G)}^\oplus \Pi d\mu(\pi)\] with some extra properties that we now explicate. 
First, we can suppose that for each $\pi$, the map $\alpha_\pi$ sends $\sm_c(H\backslash G)$ to the smooth part $\pi$ of $\Pi$. Denote by \[\beta_{\pi}: \pi \to \sm(H\backslash G)\] the adjoint of $\alpha_{\pi}$ defined by the relation 
\begin{equation}\label{eq adjoint} \langle \alpha_\pi(f), v\rangle_\pi=\int_{H\backslash G} f(g) \overline{\beta_\pi(v)(g)} dg.\end{equation} By Frobenius reciporicity, there exists $\lambda_{\pi}\in \Hom_H(\pi,\BC)$ such that for any $v\in \pi$, and any $g\in G$, one has 
\begin{equation}\label{eq beta} \beta_{\pi}(v)(Hg)=\lambda_{\pi}(\pi(g)v),\end{equation} as explained in \cite[Section 4.2]{HO}. In particular, the 
measure $\mu$ is actually supported on $\udist(G)$. Then, because $H\backslash G$ has polynomial growth, Bernstein asserts on \cite[p. 668]{BernPL} that $\mu$ is in fact supported on the set of $H$-tempered representations, i.e. the set $\Htemp(G)$ of representations $\pi$ such that each $\alpha_\pi:\sm_c(H\backslash G)\to \pi\subseteq \Pi$ extends (necessarily uniquely) as a continuous map which we still denote $\alpha_\pi:\CC(H\backslash G)\to \Pi$. Note that the image of this extension again lands inside $\pi$ by smoothness of the functions in the Harish-Chandra Schwartz space. For $H$-tempered $\pi$, it is then explained in \cite[p. 689]{BernPL} that for any $v\in \Pi$, the map $\beta_{\pi}(v)$ has moderate growth in the sense that $N_d^{H\backslash G}\beta_\pi(v)$ belongs to $L^2(H\backslash G)$ for some $d>0$. We denote by $\CC^w(H\backslash G)$ the space of functions of moderate growth in $\sm(H\backslash G)$, hence 
\[\beta_\pi:\pi\to  \CC^w(H\backslash G).\] It follows that if $\pi$ is $H$-tempered, the integral $\int_{H\backslash G} f(g)\overline{\beta_\pi(v)(g)} dg$ is absolutely convergent whenever $f\in \CC(H\backslash G)$. Approaching $f\in \CC(H\backslash G)$ by functions of the form $f_k=\mathbf{1}_{C_k}f$, for $\mathbf{1}_{C_k}$ the characteristic function of $C_k$ in an increasing sequence of compact open subsets of $H\backslash G$ exhausting it, we see that Equality \eqref{eq adjoint} still holds for $f$ by continuity of $\alpha_\pi$ and the dominated convergence theorem. Note that 
$\beta_{\pi}$ can be thought of as a linear map from $\pi$ to the strong Hermitian dual $\overline{\CC(H\backslash G)'}$ of $\CC(H\backslash G)$. Indeed  
$\CC^w(H\backslash G)$ identifies with a subspace of $\overline{\CC(H\backslash G)'}$ by setting 
\[ \langle f^w ,f \rangle =\int_{H\backslash G} f^w(x) \overline{f}(x) dx \] for $f\in \CC(H\backslash G)$ and $f^w\in \CC^w(H\backslash G)$. The main result of \cite{BernPL} then states that for any $f\in \CC(H\backslash G)\subseteq \CC^w(H\backslash G) \subseteq \overline{\CC(H\backslash G)'}$, the following equality holds in $\overline{\CC(H\backslash G)'}$:
\[ f=\int_{\Htemp(G)} \beta_{\pi}(\alpha_\pi(f))d\mu(\pi).\] Implicit in this equality is the existence of its right hand side in the sense of Gelfand-Pettis integrals (see \cite[p.77]{Rud}). 

Note that for any $\phi\in  \CC(H\backslash G)$, the map $L\to L(\phi)$ is obviously continuous on the strong dual $\CC(H\backslash G)'$. In particular this implies that for any 
$\phi$ in $\CC(H\backslash G)$ one has 
\begin{equation}\label{eq **} \langle f, \phi \rangle =\int_{\udist(G)} \langle \beta_{\pi}(\alpha_\pi(f)), \phi \rangle d\mu(\pi).\end{equation} Now consider $f\in \CC(H\backslash G)^K$ as in the statement, so that in particular each $\beta_{\pi}(\alpha_\pi(f))$ is also right $K$-invariant. Taking for $\phi$ an appropriate multiple of the $\triv_{H\backslash HK}$, we obtain 
\[f(He)=\int_{\udist(G)} \beta_{\pi}(\alpha_\pi(f))(He)d\mu(\pi)=\int_{\udist(G)}\lambda_{\pi}(\alpha_\pi(f))d\mu(\pi),\] where the second equality holds thaks to Equation \eqref{eq beta}.  
Finally take any orthonormal basis $B_{\pi}^K$ of the finite dimensional Hilbert space $\pi^K$, we can write 
$ \alpha_\pi(f)=\sum_{v\in B_{\pi}^K} \langle \alpha_\pi(f),v \rangle_{\pi} v$, and the statement of theorem follows from 
Equation \eqref{eq beta}. 
\end{proof}

\begin{rem}\label{supp}
We emphasize that according to Section \ref{sec TP}, whenever the pair $(G,H)$ is tempered, for example when $(G,H)=(G_n,H_n)$, the Plancherel measure above is actually supported on $\tdist(G)$. In other words we can, and we will replace the set $\Htemp(G)$ by the set 
$\Htemp(G_n)\cap \tdist(G_n)$ in the inversion formula of Theorem \ref{thm Bern}.
\end{rem}

\begin{rem}
There is another notion of $H$-temperdness defined in \cite{Tak}. We did not try to compare it with that of \cite{BernPL} but if they were to agree, then it would follow from \cite[Corollary 3.14]{Tak} that actually $\tdist(G)\subseteq \Htemp(G)$ always. We do not need to know this in the present work.
\end{rem}

\section{Gamma factors and root numbers of pairs of distinguished representations, and a global application}\label{sec trivial root}

Here $F$ is any local field of characteristic zero. Let $\pi$ and $\pi'$ be irreducible representations of $G_n$ and $G_m$ respectively, and $\psi:E\to \BC^\times$ be a non trivial character. Using the Langlands classification for irreducible representations of $G_n$ in the non Archimedean case, Jacquet, Piatetski-Shapiro and Shalika associate in \cite{JPSS} and \cite{Jarch} to the triple $(\pi,\pi',\psi)$ the following meromorphic functions in the complex variable $s$, the definition of which we do not recall: $L(s,\pi,\pi'),$ $\epsilon(s,\pi,\pi',\psi) $ and $\gamma(s,\pi,\pi',\psi).$

Now according to \cite{Hllc} and \cite{HTllc} in in the $p$-adic case, and \cite{Larch} (see also \cite{Knapp}) in the Archimedean case, to $\pi$ and $\pi'$ are attached Langlands parameters that we denote by $\phi_\pi$ and $\phi_\pi'$. They are finite dimensional representations of the Weil group $W_F$ of $F$ when $F$ is Archimeden, and of its Weil-Deligne group $W_F\times \SL_2(\BC)$ otherwise. We say that $\phi_{\pi}$ is \textit{symplectic} if it preserves a non degenerate alternating linear map, in which case we say that \textit{$\pi$ is symplectic}. It is known that one has the following equalities between the factors of Jacquet, Piatetski-Shapiro and Shalika and the factors of Artin, Deligne and Langlands, where in the Archimedean case the equality is actually the definition:

\[L(s,\pi,\pi')=L(s,\phi_\pi\otimes \phi_{\pi'}),\]
\[\e(s,\pi,\pi',\psi)=\e(s,\phi_\pi\otimes \phi_{\pi'},\psi)\] and 
\[\gamma(s,\pi,\pi',\psi)=\gamma(s,\phi_\pi\otimes \phi_{\pi'},\psi).\]

The value $\e(1/2,\pi,\pi',\psi)$ is called the root number attached to $\pi$, $\pi'$ and $\psi$. We want to claim that root numbers attached to a pair of irreducible distinguished representations are trivial (assuming that $\psi$ is trivial on $F$ when $[E:F]=2$). This is actually proved in \cite[Theorem 3.6]{MO} when $[E:F]=2$. In the case where $E=F$, we deduce it passing to the dual side of parameters, and using the following result, which follows from \cite[Theorem 3.12, Theorem 3.20]{Sign} and \cite[Corollary 3.4]{Sign} and rely on the work of many others when $F$ is $p$-adic, and \cite[Appendix B]{Sign} when $F$ is Archimedean. 
 
\begin{thm}\label{thm symplectic}
Let $n\geq 1$, and suppose that $E=F$. If $\pi\in \dist(G_{2n})$, hence in particular if $\pi\in \shal(G_{2n})$, then $\pi$ is symplectic. 
\end{thm}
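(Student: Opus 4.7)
The plan is to pass through the Langlands classification, reduce to essentially square-integrable and ultimately cuspidal building blocks, and then invoke the Jacquet--Shalika characterization of the Shalika model through exterior square $L$-functions.

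First, using the Langlands classification, write $\pi = L(\Delta_1,\dots,\Delta_r)$ as the irreducible quotient of a standard module indexed by essentially square-integrable representations $\Delta_i$, so that $\phi_\pi = \bigoplus_i \phi_{\Delta_i}$. A non-degenerate alternating $W_F \times \SL_2(\BC)$-invariant form on $\phi_\pi$ exists if and only if the multiset $\{\phi_{\Delta_i}\}$ decomposes into pairs $\{\phi,\phi^\vee\}$ with $\phi \not\cong \phi^\vee$ (which are automatically symplectic) together with self-dual singletons $\phi_{\Delta_i}$ that themselves carry an alternating invariant form. The combinatorial input is then the classification of $H_{2n}$-distinguished Langlands quotients of $\GL_{2n}(F)$, due in this setting to Matringe and Matringe--Offen, which asserts that $\pi \in \dist(G_{2n})$ precisely when the segments pair in this fashion, with each unpaired block being itself an essentially square-integrable distinguished representation. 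This reduces the problem to the essentially square-integrable case.

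Second, assume $\Delta$ is essentially square-integrable and $H$-distinguished. A central character argument, using that the central character of a distinguished representation is trivial on $Z \cap H$, forces $\Delta$ to be unitary and self-dual. Writing $\Delta = \St_d(\rho)$ for a unitary self-dual cuspidal $\rho$ of some $G_r$ with $rd = 2n$, the parameter is $\phi_\rho \otimes S_d$ where $S_d$ is the $d$-dimensional irreducible representation of $\SL_2(\BC)$. This tensor product is symplectic iff the pair $(\phi_\rho, d)$ satisfies one of the matching parities (\emph{symplectic with $d$ odd}, or \emph{orthogonal with $d$ even}). I would then invoke the known classification of distinguished discrete series of $\GL_{2n}(F)$ (Matringe; with \cite[Appendix B]{Sign} covering the archimedean input) to establish exactly this parity compatibility, reducing the entire problem to the cuspidal case.

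Third, if $\rho$ is cuspidal and $H$-distinguished, then by the equality $\gdist(G_{2k}) = \gshal(G_{2k})$ recalled before the statement (cuspidals being generic), $\rho$ admits a Shalika model. By the Jacquet--Shalika theorem, this is equivalent to $L(s,\rho,\wedge^2)$ having a pole at $s=0$. Combined with the matching of analytic and Galois-side exterior square $L$-functions due to Henniart, the pole of $L(s,\wedge^2 \phi_\rho)$ at $s=0$ means $\wedge^2 \phi_\rho$ contains the trivial representation, which for irreducible self-dual $\phi_\rho$ is equivalent to $\phi_\rho$ being symplectic. This closes the chain of reductions.

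The main obstacle lies in the second step: one must match the parity of the Steinberg datum $d$ with the type of the underlying cuspidal $\rho$. This cannot be read off directly from distinction of $\rho$ alone, and requires either a Bernstein--Zelevinsky derivative argument transferring the distinction of $\St_d(\rho)$ to a suitably twisted distinction of $\rho$, or appeal to the explicit classification of distinguished discrete series, with extra care when $d$ is small. The archimedean case of the theorem would be treated separately along the lines of \cite[Appendix B]{Sign}, which effectively reduces to an explicit verification of the claim for discrete series of $\GL_2(\BR)$ and $\GL_2(\BC)$ using the known description of their Langlands parameters.
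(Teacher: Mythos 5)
Your overall chain --- reduce via the Langlands classification to essentially square-integrable blocks, then to cuspidal representations, and conclude with the Shalika-model/exterior-square-pole criterion of Jacquet--Shalika together with Henniart's identity between the analytic and Galois exterior square $L$-functions --- is the standard route underlying the literature, and your second and third steps are essentially sound; in fact for discrete series you could shortcut the parity bookkeeping entirely: distinguished implies Shalika (Matringe, for generic representations), hence a pole of the local exterior square $L$-function at $s=0$ (Jacquet--Shalika, Kewat--Raghunathan), hence $\wedge^2\phi_\pi$ contains the trivial character, and an irreducible self-dual parameter with that property is symplectic. Note that the paper itself does not reprove the theorem: it quotes it from \cite[Theorems 3.12 and 3.20, Corollary 3.4]{Sign} in the $p$-adic case and \cite[Appendix B]{Sign} in the Archimedean case, stressing that these rely on the work of many others; your endgame reproduces part of that chain.

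The genuine gap is your first step. The theorem concerns every $\pi\in\dist(G_{2n})$, not only generic or unitary ones, and the ``combinatorial input'' you invoke --- that an arbitrary distinguished Langlands quotient must have its segments pairing as $\{\Delta,\Delta^\vee\}$ with each unpaired block an essentially square-integrable distinguished representation --- is not available from the sources you name: Matringe's classification (and the Matringe--Offen results) treat generic representations, and Yang treats the tempered/unitary case, whereas for a general, possibly non-generic and non-unitary Langlands quotient this necessity statement requires a full orbit analysis of $H_{2n}$ acting on $G_{2n}/P$ and control of the contributions of all orbits, which is exactly the hard content the paper outsources to \cite{Sign}. As written, your argument therefore begs the question at its decisive step. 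Two smaller points: your ``if and only if'' criterion for symplecticity of a semisimple parameter fails in the ``only if'' direction (an orthogonal irreducible constituent of even multiplicity, e.g. $\phi\oplus\phi$, is symplectic); you only need sufficiency, so this is harmless, but it should be corrected. Also, self-duality of an unpaired block does not follow from the central character argument (that only gives unitarity), and in the Archimedean sketch recall that $\GL_2(\BC)$ has no discrete series, so that reduction needs to be restated as well.
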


Then we have the following theorem, proved in \cite[Theorem 3.6]{MO} in the Galois case, and is essentially proved in \cite[Alternate proof of Proposition 2.1]{PR99} (see also \cite{R}) otherwise. 

\begin{thm}\label{thm trivial local root}
Let $n,m\geq 1$. Let $\pi\in \dist(G_{an})$ and $\pi'\in \dist(G_{am})$, and assume that $\psi$ is trivial on $F$ when $[E:F]=2$. Then 
\[\e(1/2,\pi,\pi',\psi)=1.\] When $E=F$, if one takes $\pi\in \shal(G_{an})$ and $\pi'\in \shal(G_{am})$, the conclusion 
remains unchanged. 
\end{thm}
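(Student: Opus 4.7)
The plan is to split according to whether $[E:F]=2$ or $E=F$. In the Galois case $[E:F]=2$, the statement has already been established as \cite[Theorem 3.6]{MO}, so this half is purely a matter of invoking that reference (and of checking that the hypothesis on $\psi$ aligns with the one used there).

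For the linear case $E=F$, the plan is to transfer the question to the Langlands parameter side via the identity $\e(1/2,\pi,\pi',\psi)=\e(1/2,\phi_\pi\otimes\phi_{\pi'},\psi)$ already recorded above. Theorem \ref{thm symplectic} then applies to both $\pi$ and $\pi'$ in either of the hypotheses of the statement, since its conclusion is formulated for $\dist(G_{2n})$ and explicitly includes $\shal(G_{2n})$. Hence $\phi_\pi$ and $\phi_{\pi'}$ are symplectic representations of the Weil--Deligne group. Their tensor product is therefore orthogonal of even dimension $4nm$, and of trivial determinant because each symplectic factor has trivial determinant. The theorem thus reduces to the classical assertion that the root number at $s=1/2$ of a tensor product of two symplectic Weil--Deligne representations equals $1$, which is the substance of \cite[Alternate proof of Proposition 2.1]{PR99}, with an alternative account in \cite{R}.

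The main obstacle is precisely this last step, since Deligne's formula only gives that the root number of a self-dual orthogonal representation at $s=1/2$ is $\pm 1$, governed by the second Stiefel--Whitney class; one must exhibit the vanishing of this class for $\phi_\pi\otimes\phi_{\pi'}$. Geometrically this amounts to the fact that the composition $\mathrm{Sp}\times\mathrm{Sp}\to\mathrm{SO}$ lifts through the spin double cover $\mathrm{Spin}\to\mathrm{SO}$, which is exactly what Prasad and Ramakrishnan exploit in their argument. In practice no new work beyond invoking \cite{PR99} is needed on our side.

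Finally, the Archimedean flavor of the argument is handled uniformly: the references \cite{Larch}, \cite{Knapp} and \cite{Jarch} provide the local Langlands correspondence and the identification of the analytic and arithmetic epsilon factors, while \cite[Appendix B]{Sign} extends Theorem \ref{thm symplectic} to the Archimedean setting. Together these ensure that the passage from $\pi$ and $\pi'$ to symplectic parameters, and then to the root number identity, works in the same way regardless of whether $F$ is \p{} or Archimedean.
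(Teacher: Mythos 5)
Your treatment of the Galois case ($[E:F]=2$ via \cite[Theorem 3.6]{MO}) and your first reduction in the linear case (pass to parameters and apply Theorem \ref{thm symplectic} to get that $\phi_\pi$ and $\phi_{\pi'}$ are symplectic) coincide with the paper. The gap is in your final step: you assert that the triviality of $\e(1/2,\phi_\pi\otimes\phi_{\pi'},\psi)$ for \emph{arbitrary} symplectic Weil--Deligne representations is "the substance of" \cite[Alternate proof of Proposition 2.1]{PR99} and that "no new work" is needed. That is not the case when $F$ is $p$-adic and the parameters have nontrivial restriction to the $\SL_2(\BC)$ factor. The Deligne/Stiefel--Whitney argument (via the lifting of $\mathrm{Sp}\times\mathrm{Sp}\to\mathrm{SO}$ through $\mathrm{Spin}$) that Prasad and Ramakrishnan use applies to orthogonal representations of the Weil group $W_F$; the root number of a Weil--Deligne representation with nonzero monodromy is not just the root number of its underlying $W_F$-representation, since it carries an extra factor coming from the monodromy (equivalently, from the $L$-factor corrections), and this factor is invisible to the second Stiefel--Whitney class. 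So your argument only covers the Archimedean case and the $p$-adic case where $\phi_\pi$ and $\phi_{\pi'}$ are trivial on $\SL_2(\BC)$ --- which is exactly where the paper also simply quotes \cite{PR99}.

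Since the point of the paper is to treat square integrable (not just cuspidal) representations, the parameters of interest are precisely of the form $\phi\otimes\mathrm{Sp}(k)$ with $k\geq 2$ (e.g.\ generalized Steinberg), so the missing case is the essential one. The paper closes it as follows: by symplecticity, each parameter decomposes into summands $\mathrm{Sp}(k,\phi)=\phi\otimes\mathrm{Sp}(k)$ with $\phi$ irreducible, symplectic for $k$ odd and orthogonal for $k$ even, together with pieces $\tau\oplus\tau^\vee$; the pieces $\tau\oplus\tau^\vee$ are disposed of by basic properties of root numbers; for $\mathrm{Sp}(k,\phi)\otimes\mathrm{Sp}(l,\phi')$ one uses the Clebsch--Gordan decomposition into $\mathrm{Sp}(k+l-1,\phi\otimes\phi')\oplus\dots\oplus\mathrm{Sp}(|k-l|+1,\phi\otimes\phi')$ and the formula of \cite[Section 2, (a)]{Sign} showing all summands contribute equal signs, which settles the case where $k$ or $l$ is even; and when $k,l$ are both odd and $\phi,\phi'$ both symplectic, the same formula reduces $\e(\mathrm{Sp}(k+l-1,\phi\otimes\phi'))$ to $\e(\phi\otimes\phi')$, a genuine Weil-group root number, to which \cite{PR99} finally applies. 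You would need to supply this (or an equivalent) reduction for your proof to be complete.
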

\begin{proof}
It is sufficient to consider the case where $E=F$. We need to prove that $\e(\phi_{\pi}\otimes \phi_{\pi'},\psi)=1$. We refer to \cite[Discussion before Section 2.4]{Sign} for generalities on root numbers used in this proof. When $F$ is Archimedean, the result is proved in \cite[Alternate proof of Proposition 2.1]{PR99}. When $F$ is $p$-adic and $\phi_{\pi}$ and $\phi_{\pi'}$ are trivial on $\SL_2(\BC)$, this is also proved there. In general, we write $\mathrm{Sp}(k)$ for the $k$-dimensional irreducible algebraic representations of $\SL_2(\BC)$. Both $\phi_{\pi}$ and $\phi_{\pi'}$ are symplectic thanks to Theorem \ref{thm symplectic}, and in particular $\e(\phi_{\pi}\otimes \phi_{\pi'},\psi)$ does not depend on $\psi$ and we write it $\e(\phi_{\pi}\otimes \phi_{\pi'})$. Moreover by symplecticity both $\phi_{\pi}$ and $\phi_{\pi'}$  decompose as a sum of representations of the follwing type: $\mathrm{Sp}(k,\phi):=\phi\otimes \mathrm{Sp}(k)$ with $\phi$ an irreducible representations of $W_F$, which is symplectic when $k$ is odd and orthogonal when $k$ is even, and representations of the form $\tau\oplus \tau^\vee$ for $\tau$ irreducible. This reduces the problem to proving two equalities. First that $\e(\tau_0,\tau_1\oplus \tau_1^\vee)=1$ when $\tau_0$ is symplectic, but this follows at once from basic properties of root numbers. Second that $\e(\mathrm{Sp}(k,\phi)\otimes \mathrm{Sp}(l,\phi'))=1$ when both the pairs $(k,\phi)$ and $(l,\phi')$ satisfy the parity conditions described above. 
Now we recall that \[\mathrm{Sp}(k,\phi)\otimes \mathrm{Sp}(l,\phi')=\mathrm{Sp}(k+l-1,\phi\otimes \phi')\oplus \dots \oplus \mathrm{Sp}(|k-l|+1,\phi\otimes \phi').\] The root number of all summands above are all signs and equal to each other by \cite[Section 2, (a)]{Sign}, hence this gives the triviality as soon as $k$ or $l$ is even. The remaining case is when both $k$ and $l$ are odd, and both $\phi$ and $\phi'$ are symplectic. In this case we claim that \[\e(\mathrm{Sp}(k+l-1,\phi\otimes \phi'))=\e(\phi\otimes \phi').\] Indeed decompose $\phi\otimes \phi'$ as a direct sum of irreducible selfdual representations of $W_F$, use additivity of root numbers and and apply the formula in \cite[Section 2, (a)]{Sign} again. The triviality now follows from \cite[Alternate proof of Proposition 2.1]{PR99}.
\end{proof}

\begin{rem}
The cuspidal case of the above result, proved in \cite[Alternate proof of Proposition 2.1]{PR99}, relies on the result of Deligne (\cite{Del}) giving the sign of the root number of orthogonal representations of $W_F$ in terms of lifting to spin groups. One could also give a direct proof \`a la Ok which would be quite lengthy. Conversely \cite[Theorem 3.6]{MO}, the cuspidal case of which is one of the main results of \cite{Ok}, can be obtained by an argument on the Galois side. Indeed in the case of Galois models, cuspidal distinguished representations are known to be characterized by the fact that their Langlands parameter is conjugate orthogonal in the terminology of \cite{GGP}, and conjugate orthogonal root numbers are known to be trivial by \cite{GGP}. Note that this triviality is again proved using the result of Deligne. 
\end{rem}

The above theorem has the following corollary which is proved in \cite[Theorem 3.7]{MO} when $[E:F]=2$. 

\begin{cor}\label{cor trivial gamma}
Let $\pi\in \tdist(G_{an})$ and $\pi'\in \ugdist(G_{am})$, and assume that $\psi$ is trivial on $F$ when $[E:F]=2$. Then 
\[\gamma(1/2,\pi,\pi',\psi)=1.\] 
\end{cor}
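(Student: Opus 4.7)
The plan is to combine the standard identity
\[\gamma(s,\pi,\pi',\psi) = \epsilon(s,\pi,\pi',\psi) \cdot \frac{L(1-s,\pi^\vee,\pi'^\vee)}{L(s,\pi,\pi')}\]
evaluated at $s = 1/2$ with Theorem \ref{thm trivial local root}. Since $\tdist(G_{an}) \subseteq \dist(G_{an})$ and $\ugdist(G_{am}) \subseteq \dist(G_{am})$, that theorem immediately gives $\epsilon(1/2,\pi,\pi',\psi) = 1$, so the task collapses to proving the $L$-value equality $L(1/2,\pi,\pi') = L(1/2,\pi^\vee,\pi'^\vee)$, provided both sides are finite and nonzero.

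Finiteness will follow from the standard holomorphicity of Rankin-Selberg $L$-factors on $\Re(s) > 0$ when one representation is tempered and the other is unitary generic; nonvanishing is automatic since local $L$-factors are reciprocals of polynomials in $q^{-s}$ in the $p$-adic case and products of Gamma factors in the Archimedean case. The case $[E:F] = 2$ is already handled in \cite[Theorem 3.7]{MO}, so only the linear case $E = F$ (where $a = 2$) requires a new argument.

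For $E = F$, the main input is Theorem \ref{thm symplectic}: since $\pi \in \tdist(G_{2n}) \subseteq \dist(G_{2n})$ and $\pi' \in \ugdist(G_{2m}) \subseteq \dist(G_{2m})$, both Langlands parameters $\phi_\pi$ and $\phi_{\pi'}$ are symplectic, hence self-dual as Weil-Deligne representations. Consequently $\phi_\pi \otimes \phi_{\pi'} \cong (\phi_\pi \otimes \phi_{\pi'})^\vee$, and through the identification of the Jacquet-Piatetski-Shapiro-Shalika $L$-factors with the Artin-Deligne-Langlands $L$-factors recalled at the start of this section, one obtains $L(1/2,\pi,\pi') = L(1/2,\pi^\vee,\pi'^\vee)$, which together with the triviality of the epsilon factor concludes the proof. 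I do not anticipate any real obstacle: once the symplecticity of the parameters is extracted from Theorem \ref{thm symplectic}, the remaining steps are essentially bookkeeping between the automorphic and Galois sides.
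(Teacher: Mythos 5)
Your argument is essentially the paper's own: the paper simply notes that the proof of \cite[Theorem 3.7]{MO} (namely $\gamma(1/2)=\epsilon(1/2)\,L(1/2,\pi^\vee,\pi'^\vee)/L(1/2,\pi,\pi')$, with $\epsilon(1/2)=1$ from Theorem \ref{thm trivial local root} and the $L$-values matching by (conjugate-)selfduality of distinguished representations, both finite and nonzero at $s=1/2$) adapts verbatim to $E=F$, which is exactly what you spell out via Theorem \ref{thm symplectic}. The only quibble is your claim of holomorphy of the $L$-factor on all of $\Re(s)>0$ for a tempered--unitary-generic pair, which is slightly too strong (poles can occur at $\Re(s)=-\alpha_j\in(0,1/2)$), but since the unitary exponents are strictly less than $1/2$ in absolute value the needed finiteness and nonvanishing at $s=1/2$ do hold, so there is no gap.
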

\begin{proof}
The proof of \cite[Theorem 3.7]{MO} adapts verbatim when $E=F$, using selfduality of irreducible distinguished representations. 
\end{proof}

\begin{rem}
It is not true that the above equality extends to larger classes of irreducible representations (actually the class where $\pi$ varies can slightly be extended, see \cite[Theorem 3.7]{MO}). This is explained in \cite[Example 3.8]{MO} when $[E:F]=2$, and the examples there are easily modified to the situation where $E=F$, essentially by erasing the letter $\sigma$. 
\end{rem}

Following \cite{PR99}, we state a corollary of Theorem \ref{thm trivial local root} when $E=F$, which was conjectured by Prasad and Ramakrishnan in \cite{PR99}. The first part of this corollary is already known thanks to the works \cite{CKPSS}, \cite{GRS}, \cite{Kim}, \cite{JiSo} and \cite{JiSo2} on local and global converse theorems and functoriality: by \cite{GRS} the symplectic representation $\pi$ as below is a weak lift (see \cite{CKPSS}) of a generic cuspidal automorphic representation representation $\sigma$ of $\SO_{n+1}(\BA)$, then by \cite{Kim} the representation $\sigma$  strongly lifts (see \cite[Definition 2.1]{Kim}) to $\GL_n(\BA)$, but then by \cite{JiSo} and \cite{JiSo2},  the Langlands parameters of the local components of $\pi$ are equal to the Langlands parameters of the local components of $\sigma$. The last part of the previous statement implies that all local components of $\pi$ are symplectic. 

\begin{cor}[of Theorem \ref{thm trivial local root}, {\cite[Conjecture I and III]{PR99}}] 
Let $\BA$ be the ring of adeles of a number field, let $n\geq 1$, and let $\pi$ be a cuspidal automorphic representations of $\GL_{2n}(\BA)$ which is symplectic, i.e. such that its partial exterior square L function has a pole at $s=1$. Then all local components of $\pi$ have a symplectic Langlands parameter. In particular if $m\geq 1$ and $\pi'$ is a cuspidal automorphic representations of $\GL_{2m}(\BA)$ which is symplectic as well, the global root number $\e(1/2,\pi,\pi')$ is trivial. 
\end{cor}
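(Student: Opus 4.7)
The plan is to derive the first assertion from known global functoriality statements for the classical groups, and then to deduce the second assertion from the first via the product formula for global $\e$-factors combined with the local root number calculation already embedded in the proof of Theorem \ref{thm trivial local root}.

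For the symplecticity of local components, I would follow exactly the functorial chain outlined in the paragraph preceding the statement. The pole of the partial exterior square L-function at $s=1$ together with the descent of \cite{GRS} realizes $\pi$ as the weak functorial lift of a generic cuspidal automorphic representation $\sigma$ of $\SO_{2n+1}(\BA)$. The work \cite{Kim}, building on \cite{CKPSS}, upgrades this to a strong lift $\tilde\pi$ of $\sigma$ to $\GL_{2n}(\BA)$, and strong multiplicity one for isobaric automorphic representations of $\GL_{2n}$ forces $\tilde\pi=\pi$. At every place $v$, the local converse theorems of \cite{JiSo, JiSo2} then identify $\phi_{\pi_v}$ with the image of the local Langlands parameter of $\sigma_v$ under the inclusion $\Sp_{2n}(\BC)\hookrightarrow\GL_{2n}(\BC)$, so that $\phi_{\pi_v}$ is symplectic for every $v$. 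The same argument applied to $\pi'$ gives symplecticity of each $\phi_{\pi'_v}$.

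For the global root number, I would invoke the factorization
\[
\e(1/2,\pi,\pi')=\prod_v \e(1/2,\pi_v,\pi'_v,\psi_v)
\]
for a fixed non-trivial additive character $\psi=\otimes_v \psi_v$ of $F\backslash \BA$. The left-hand side is independent of $\psi$ because both parameters are symplectic, hence selfdual, and almost all local factors equal $1$. By the first part, each pair $(\phi_{\pi_v},\phi_{\pi'_v})$ consists of symplectic parameters, so Theorem \ref{thm trivial local root} applied at the place $v$ gives $\e(1/2,\pi_v,\pi'_v,\psi_v)=1$. Multiplying over $v$ concludes.

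The only point requiring care is that Theorem \ref{thm trivial local root} is formally stated for distinguished (or Shalika) representations, whereas the functorial chain produces only symplecticity of the parameters; but the actual content of its proof is a root number calculation performed on the dual side, which depends only on the symplecticity of $\phi_\pi$ and $\phi_{\pi'}$ (distinction enters there only to invoke Theorem \ref{thm symplectic} as the source of that symplecticity). Hence the local triviality transfers verbatim to the current global setting, and this is the only step that demands inspection.
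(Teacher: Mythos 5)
Your argument is correct, but it takes a genuinely different route from the paper's. The paper proves the corollary in one line: the pole of the partial exterior square $L$-function at $s=1$ gives, by Jacquet--Shalika \cite{JSext}, a global Shalika period for $\pi$, hence every local component $\pi_v$ lies in $\shal(G_{2n})$; Theorem \ref{thm symplectic} then makes each $\phi_{\pi_v}$ symplectic, and the Shalika-model clause of Theorem \ref{thm trivial local root}, applied place by place together with the product formula, gives $\e(1/2,\pi,\pi')=1$. You instead derive local symplecticity from the functoriality chain (the descent of \cite{GRS}, the strong lift of \cite{Kim} building on \cite{CKPSS}, and the local identification of parameters in \cite{JiSo}, \cite{JiSo2}); this is precisely the previously known argument that the paper recalls in the paragraph before the corollary, whereas the paper's declared purpose (see the abstract) is to give a new proof bypassing that machinery. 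A consequence of your choice is that you cannot quote Theorem \ref{thm trivial local root} as stated, since your first step supplies neither distinction nor a Shalika model for $\pi_v$; you must, as you correctly note, open up its proof and observe that once Theorem \ref{thm symplectic} has been invoked the remaining computation on the Galois side uses only symplecticity of $\phi_{\pi_v}$ and $\phi_{\pi'_v}$ (and \cite{PR99} at the Archimedean places), so the local triviality does hold in your setting. In short, your proof is valid but rests on the heavy global-to-local functoriality results, while the paper obtains the same conclusion from the much softer Shalika-period argument combined with its own local theorems --- which is exactly the novelty it advertises.
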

\begin{proof}
By \cite{JSext}, the representation $\pi$ has a Shalika period, hence all its local components have a Shalika model.
\end{proof}

\begin{rem}
Of course the analogue statement holds when $[E:F]=2$, with the same proof. It is \cite[Theorem 6.6]{MO}, and was conjectured in [AnaRoot, Conjecture 5.1]. 
\end{rem}

A natural further question is whether $L(1/2,\pi,\pi')$ is necessarily nonzero or not.\\

The remaining sections are devoted with the proof of the relative converse theorems. 

\section{Partial Fourier coefficient of square integrable matrix coefficients}\label{sec part Four}

From now on, we focus on the group $G_n$.  

\subsection{Stable integrals of smooth functions on some unipotent groups}\label{sec stable int}

For the beginning of this section, and only there in the paper, it will be convenient to 
have the following extra notations: 

\begin{itemize} 
\item $K:=E\times E$,
\item $\psi':K\to \BC$ the character $\psi\otimes \psi^{-1}$ of $K$ for $\psi$ as we fixed before,
\item $G'_n=\GL_n(K)=\GL_n(E)\times \GL_n(E)$,
\item $\theta'(x,y)=(y,x)$,
\item $H'_n=(G'_n)^{\theta'}$,
\item $\nu_K=\nu_E \otimes \nu_E$.  
\end{itemize}

The notion of standard parabolic subgroups of $G'_n$ and their related subgroups is obvious. Then for a uniformity of the statements, we set:

\begin{itemize}
\item Either $G_n^*:=G_n$, $\psi^*=\psi$ as fixed before, $H_n^*=H_n$, etc,
\item or $G_n^*:=G'_n$, $\psi^*=\psi'$ as fixed above, $H_n^*=H'_n$, etc.
\end{itemize}

Let us recall the following useful which is proved \cite[Propositions 2.2 and 2.5]{MatPJM} when $G_n^*=G_n$ and $[E:F]=2$ (see also \cite{Fli} and \cite{Ok} for the second part). When $G_n^*=G_n$ and $E=F$ it follows from the results in \cite[Section 4]{MatCrelle}, whereas when $G_n^*=G'_n$ the second part of the statement is proved in \cite{BernMir}, and the first part is essentially contained in this paper as well. 

\begin{lem}\label{lm BFmat}
Let $\pi^*\in \ugen(G_{n}^*)$. Then the space $\Hom_{{P_{n}^*}^\theta}(W(\pi^*,\psi^*),\BC)$ has dimension one and is spanned by the linear form on 
\[\lambda_{\pi}:W^*\to \int_{{N_{n}^*}^{\theta^*}\backslash {P_{n}^*}^{\theta^*}} W^*(p)dp,\] defined by absolutely convergent integrals. In particular 
\[\Hom_{{P_{n}^*}^{\theta^*}}(\pi,\BC)=\Hom_{H_{n}^* }(\pi,\BC)\] whenever $\pi$ is distinguished by $H_{n}^*$. 
\end{lem}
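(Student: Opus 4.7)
The plan is to treat both cases $G_n^*=G_n$ and $G_n^*=G_n'$ uniformly, combining the strategies of \cite{MatPJM}, \cite{MatCrelle}, \cite{Fli}, \cite{Ok} in the Galois and linear cases with those of \cite{BernMir} in the split case. Three things need to be established: absolute convergence of the integral defining $\lambda_{\pi^*}$, one-dimensionality (with non-vanishing of this specific form), and the identification with $\Hom_{H_n^*}(\pi^*,\BC)$ in the distinguished case.

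For absolute convergence, I would appeal to the standard gauge estimate for Whittaker functions of generic unitary representations: on the diagonal torus $T_n^*$, parametrised by simple roots, $|W^*(t(z_1,\dots,z_n))|$ is bounded by a finite sum of products $\prod_{i=1}^{n-1}\phi_i(z_i)$, where each $\phi_i$ is a Schwartz--Bruhat function on $E$ restricted to $E^\times$ times a character with strictly positive real part. The positivity comes from the Tadi\'c classification of $\ugen(G_n^*)$, which places the exponents of the Jacquet modules strictly inside the critical strip. Combining this with an Iwasawa decomposition on ${P_n^*}^{\theta^*}$ and a compactness argument on the $\theta^*$-fixed part of the torus reduces the convergence to a finite sum of integrals of Schwartz functions against positive exponential factors, which converge absolutely. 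This is the mechanism already used in \cite[Propositions 2.2 and 2.5]{MatPJM} and adapts \emph{verbatim} in all cases.

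For uniqueness together with non-vanishing, I would apply the Bernstein--Zelevinsky filtration of $W(\pi^*,\psi^*)$ viewed as a $P_n^*$-module, whose successive subquotients are of the form $(\Phi^+)^{n-k}\Psi^+((\pi^*)^{(k)})$ for $1\leq k\leq n$. After applying the functor $\Hom_{{P_n^*}^{\theta^*}}(-,\BC)$, Frobenius reciprocity combined with a Mackey analysis of the orbits of ${P_n^*}^{\theta^*}$ on the relevant homogeneous spaces shows that only the top piece ($k=n$, the Whittaker piece) carries a nonzero invariant, and that this invariant is precisely $\lambda_{\pi^*}$. Non-vanishing is then obtained by exhibiting a Whittaker function whose restriction to ${N_n^*}^{\theta^*}\backslash {P_n^*}^{\theta^*}$ is, say, the characteristic function of a small compact neighbourhood of the identity, using Whittaker function realisation via Kirillov models. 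The main obstacle is the orbit bookkeeping at the intermediate strata, which is somewhat delicate when $E=F$ because $\theta$ is inner rather than Galois-theoretic; however this is already handled in \cite[Section 4]{MatCrelle}, and in the split case $G_n^*=G_n'$ the analogous computation is in \cite{BernMir}.

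Finally, the inclusion ${P_n^*}^{\theta^*}\subseteq H_n^*$ is immediate from the definitions, so restriction of linear forms induces an injection $\Hom_{H_n^*}(\pi^*,\BC)\hookrightarrow \Hom_{{P_n^*}^{\theta^*}}(\pi^*,\BC)$. If $\pi^*$ is $H_n^*$-distinguished, the source is one-dimensional by \cite{Fli} and \cite{JR}, and the target is one-dimensional by the first part of the lemma, so the injection is an equality. This yields the last assertion.
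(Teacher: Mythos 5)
Your overall route (Bernstein--Zelevinsky filtration of the restriction to $P_n^*$, Kirillov-model argument for non-vanishing, gauge/asymptotic estimates plus Iwasawa reduction for convergence, and the trivial restriction argument for the final equality) is the same as the paper's, but there is a genuine gap in the uniqueness step. You claim that Frobenius reciprocity together with a Mackey/orbit analysis of ${P_n^*}^{\theta^*}$ acting on the strata ``shows that only the top piece carries a nonzero invariant.'' Orbit bookkeeping alone does not give this: it only reduces $\Hom_{{P_n^*}^{\theta^*}}\bigl((\Phi^+)^{n-k-1}\Psi^+(\tau),\BC\bigr)$ to spaces of the form $\Hom_{{G_k^*}^{\theta^*}}(\nu^{1/2}\tau,\chi)$, with $\tau$ an irreducible quotient of the $k$-th derivative and $\chi$ a twist of $\delta_k^{\pm 1/2}$ or the trivial character (this is exactly what \cite[Section 4]{MatCrelle} and \cite{BernMir} provide), and these spaces are \emph{not} zero for general generic $\pi^*$: for a non-unitary generic representation a derivative can perfectly well be distinguished with the required central twist, and then the multiplicity in the lemma fails. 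The vanishing for $1\leq k\leq n-1$ is where unitarity enters: by Bernstein's criterion (\cite[Criterion 7.4]{BernMir}, equivalently the exponent bounds from the classification of $\ugen$), the real part of the central character of such a $\tau$ is $>-k/2$, and since the center of $G_k^*$ lies in ${G_k^*}^{\theta^*}$ (in the linear case; in the Galois case one restricts to the $F$-points of the center), distinction against $\chi$ forces a central-character identity whose real part is incompatible with this bound. You invoke the unitary exponent bounds for convergence, but never for uniqueness; without that step the one-dimensionality claim is unproved, and indeed false outside $\ugen(G_n^*)$, which is precisely why the lemma carries the unitarity hypothesis.

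Two smaller points. First, in the convergence argument the assertion that the gauge exponents are ``strictly positive'' is exactly the conclusion of a computation you skip: one must compare the exponents coming from the derivatives (bounded below by $-k/2$ by unitarity) with the modulus factor $\delta_{B_{n-1}^\theta}^{-1}\bigl(\diag(z_kI_k,I_{n-1-k})\bigr)=|z_k|^{-\lfloor k(n-1-k)/2\rfloor}$ against the untwisted exponent $k(n-k)/2$; the margin is $k/2$ or $k/2+1/2$, and this is what makes the resulting Tate-type integrals converge. Second, ``a compactness argument on the $\theta^*$-fixed part of the torus'' is not available: in both the Galois and the linear cases the $\theta$-fixed torus is non-compact, and the reduction is to a non-compact torus integral controlled by the estimate just described. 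The final assertion of the lemma is argued correctly (one only needs $\Hom_{H_n^*}(\pi,\BC)\neq 0$ together with the containment in the one-dimensional space $\Hom_{{P_n^*}^{\theta^*}}(\pi,\BC)$; the appeal to multiplicity one for $H_n^*$ is superfluous but harmless).
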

\begin{proof}
When $G_{n}^*=G'_n$ we refer to \cite[Propositions 2.2 and 2.5]{MatPJM}, the proof is exactly the same. We give the details when $G_n^*=G_n$ and $E=F$, setting $\pi:=\pi^*$. By \cite{BZ2} the representation $\pi_{|P_n}$ admits a filtration in which each subquotient is of the form $(\Phi^+)^{n-k-1}(\Psi^+(\tau))$ for $0\leq k \leq n-1$ and $\tau$ an irreducible representation of $G_k$, where we refer to \cite{BZ2} for the definition of the functors $\Phi^+$ and $\Psi^+$. Moreover the representation $(\Phi^+)^{n-1}(\Psi^+(\triv_{G_0}))$ is the socle of $\pi_{|P_n}$ and corresponds to the subspace of all Whittaker functions $W\in \CW(\pi,\psi)$ such that $W_{|P_n}$ has compact support mod $N_n$. Also  according to \cite[Criterion 7.4]{BernMir} the real part of the central character of such $\tau$ must be $>-k/2$ as $\pi$ is unitary. On the other hand, thanks to \cite[4.14]{MatCrelle} we check that
\begin{enumerate} 
\item $\Hom_{P_n^\theta}((\Phi^+)^{n-k-1}(\Psi^+(\tau)),\BC)=\Hom_{G_k^\theta}(\nu^{1/2}\tau,\triv)$ if $n$ is even and $k$ is odd.
\item $\Hom_{P_n^\theta}((\Phi^+)^{n-k-1}(\Psi^+(\tau)),\BC)=\Hom_{G_k^\theta}(\nu^{1/2}\tau,\d_k^{1/2})$ if $n$ is even and $k$ is even.
\item $\Hom_{P_n^\theta}((\Phi^+)^{n-k-1}(\Psi^+(\tau)),\BC)=\Hom_{G_k^\theta}(\nu^{1/2}\tau,\triv)$ if $n$ is odd and $k$ is even.
\item $\Hom_{P_n^\theta}((\Phi^+)^{n-k-1}(\Psi^+(\tau)),\BC)=\Hom_{G_k^\theta}(\nu^{1/2}\tau,\d_k^{-1/2})$ if $n$ is odd and $k$ is odd.
\end{enumerate} 
In all cases we deduce that if $k\geq 1$, the space $\Hom_{P_n^\theta}((\Phi^+)^{n-k-1}(\Psi^+(\tau)),\BC)$ is reduced to zero. Hence 
$\Hom_{P_n^\theta}(\pi,\BC)$ is of dimension at most that of $\Hom_{P_n^\theta}((\Phi^+)^{n-1}(\Psi^+(\triv_{G_0})),\BC)=\Hom_{G_0^\theta}(\triv_{G_0},\BC)\simeq \BC$, i.e. one. It remains to prove that for $W\in \CW(\pi,\psi)$, the integral 
\[\int_{N_n^\theta\backslash P_n^\theta} W(p)dp=\int_{N_{n-1}^\theta\backslash G_{n-1}^\theta} W(\diag(h,1))dh\] converges absolutely. Indeed if so, the linear form $\lambda_{\pi}$ is automatically nonzero since the space of restrictions to $P_n $ of functions in $\CW(\pi,\psi)$ contains $(\Phi^+)^{n-1}(\Psi^+(\triv_{G_0}))$. By the Iwasawa decomposition the convergence of the integral of $\int_{N_{n-1}^\theta\backslash G_{n-1}^\theta} W(h)dh$ for any $W\in \CW(\pi,\psi)$ is reduced to that of \[\int_{T_{n-1}^\theta} W(\diag(t,1))\delta_{B_{n-1}^\theta}(t)dt=\int_{T_{n-1}} W(\diag(t,1))\delta_{B_{n-1}^\theta}^{-1}(t)dt\] for any $W\in \CW(\pi,\psi)$. Now according to 
\cite[Theorem 2.1]{MatRT}, or rather its corrected version in \cite[Theorem 2.1]{MatDer}, the convergence of the above integral amounts to that of integrals of the form \begin{equation}\label{eq int tor} \int_{F^*} c(z_k)|z_k|_F^{\frac{k(n-k)}{2}}\delta_{B_{n-1}^\theta}^{-1}(\diag(z_k I_k,I_{n-1-k}))\phi(z_k)d^*z_k\end{equation} for any 
$1\leq k\leq n-1$ such that $\pi^{(k)}$ (see \cite{BZ2}) is nonzero, where $c$ is the central character of an irreducible quotient of $\pi^{(k)}$, and $\phi\in \sm_c(F)$. Now one checks that \[\delta_{B_{n-1}^\theta}^{-1}(\diag(z_k I_k,I_{n-1-k}))=|z_k|_F^{-\lfloor \frac{k(n-1-k)}{2} \rfloor}.\] Now, as already said, by  \cite[Criterion 7.4]{BernMir} the real part of $c$ is $>-k/2$ whereas the quantity 
$\frac{k(n-k)}{2}-\lfloor \frac{k(n-1-k)}{2} \rfloor$ is either equal to $k/2$ or $k/2+1/2$, hence the Tate integral in Equation \eqref{eq int tor} converges absolutely. 
\end{proof}

We will use results and methods from both \cite{Ok} and \cite{LM} to study stable integrals of some generalized matrix coefficients over unipotent groups. The following is a generalization to all $G_n^*$ of \cite[Lemma 7.2.1]{Ok} which is for $G_n^*=G_n$, $[E:F]=2$, and a specific exhaustive family of compact open subgroups. We skip its proof which is completely similar, and relies on abelian Fourier transform properties (what is sometimes called unfolding). We set 
$\nu_*=\nu_K$ when $G_n^*=G'_n$, and $\nu_*=\nu_E$ otherwise. We also set $\chi_n^*$ to be the trivial character of $H_n^*$ except when $G_n^*=G_n$ and $E=F$, in which case $\alpha_n^*:=\delta_n^{1/2}$ if $n$ is odd and $\alpha_n^*:=\triv$ if $n$ is even. 

\begin{lem}\label{lm unfolding 1}
Let $n\geq 2$ and $1\leq k\leq n-1$. For any fixed compact open subgroup $K_0^*$ of $G_{n}^*$, there exists a compact subgroup $U^*(K_0^*)$ of $\frac{U_{k+1}^*}{(U_{k+1}^*)^{\theta^*}}$, such that for any compact open subgroup $U^*$ of $\frac{U_{k+1}^*}{(U_{k+1}^*)^{\theta^*}}$ containing $U^*(K_0^*)$ and any $\pi^*\in \ugen(G_{n}^*)$, one has the equality of absolutely convergent integrals 
\[\int_{U^*}\int_{{N_{k}^*}^{\theta^*}\backslash {H_{k}^*}^{\theta^*}}W^*(\diag(h,I_{n-k})\diag(u,I_{n-k-1}))\nu_*(h)^{(k-n+1)/2}\alpha_{k}^*(h)\]
\[\alpha_{n-1}^*(\diag(h,I_{n-1-k})^{-1} dh \psi^{-1}(u)du\]
\[=\int_{{N_{k-1}^*}^{\theta^*}\backslash {H_{k-1}^*}^{\theta^*}}W^*(\diag(h,I_{n-k+1}))\nu_*(h)^{(k-n)/2}\alpha_{k-1}^*(h)\alpha_{n-1}^*(\diag(h,I_{n-k})^{-1} dh\] for all right $K_0^*$-invariant $W^*$ in 
$\CW(\pi^*,\psi^*)$. 
\end{lem}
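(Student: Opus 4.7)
The plan is to adapt the abelian Fourier inversion argument from \cite[Lemma 7.2.1]{Ok} to the uniform starred notation, mimicking the three-step structure of Ok's proof.

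\textbf{Step 1 (commute and apply Whittaker equivariance).} The commutation identity
\[\diag(h, I_{n-k})\, \diag(u(x), I_{n-k-1}) = \diag(u(hx), I_{n-k-1})\, \diag(h, I_{n-k}),\]
valid for $h \in \GL_k^*$ and $x$ in the parameter space of $U_{k+1}^*$, lets me push $u$ to the left of $\diag(h, I_{n-k})$. Since the only nonzero superdiagonal entry of $\diag(u(hx), I_{n-k-1})$ is the $(k, k+1)$-entry $(hx)_k$, the Whittaker equivariance of $W^*$ immediately gives
\[W^*(\diag(h, I_{n-k})\, \diag(u(x), I_{n-k-1})) = \psi^*((hx)_k)\, W^*(\diag(h, I_{n-k})).\]

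\textbf{Step 2 (Fubini and Fourier analysis).} Substituting this identity and $\psi^{-1}(u(x)) = \psi^{-1}(x_k)$ into the left-hand side, and swapping the order of integration (justified by the absolute convergence of the inner ${N_k^*}^{\theta^*}\backslash {H_k^*}^{\theta^*}$-integral, itself a consequence of Lemma \ref{lm BFmat} applied to $\GL_k^*$), I reduce the inner $U^*$-integration to the abelian Fourier transform
\[\int_{U^*} \psi^*\bigl(((h-I)x)_k\bigr)\, dx\]
on the locally compact group $U_{k+1}^*/(U_{k+1}^*)^{\theta^*}$. By orthogonality of characters, once $U^*$ is large enough that the support in $h$ singled out by the right $K_0^*$-invariance of $W^*$ lies in the dual of $U^*$, this Fourier integral stabilizes: it equals a nonzero constant exactly when the linear form $x \mapsto ((h-I)x)_k$ vanishes identically on $U_{k+1}^*/(U_{k+1}^*)^{\theta^*}$, and vanishes otherwise. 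The constant $U^*(K_0^*)$ is chosen so that this stabilization holds uniformly.

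\textbf{Step 3 (identify the limit and match weights).} The vanishing locus of the linear form is the subgroup of $h \in {H_k^*}^{\theta^*}$ whose last row is $(0, \dots, 0, 1)$, which decomposes as ${H_{k-1}^*}^{\theta^*}$ times a $\theta^*$-fixed column-vector-type unipotent subgroup. Integrating out this unipotent part via another application of Lemma \ref{lm BFmat}, now for $\GL_k^*$ (which yields a further point evaluation against $\psi^*$), I recognize the remaining integral over ${N_{k-1}^*}^{\theta^*}\backslash {H_{k-1}^*}^{\theta^*}$ as the right-hand side of the claimed identity, up to a weight factor. The main obstacle lies precisely in the matching of weights: the factors $\nu_*(h)^{(k-n+1)/2}\alpha_k^*(h)\alpha_{n-1}^*(\diag(h,I_{n-1-k}))^{-1}$ must be transformed into $\nu_*(h)^{(k-n)/2}\alpha_{k-1}^*(h)\alpha_{n-1}^*(\diag(h,I_{n-k}))^{-1}$. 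This requires carefully tracking the modulus character of the Iwasawa decomposition of the mirabolic of ${H_k^*}^{\theta^*}$, the Jacobian of the implicit change of variables $x \mapsto h^{-1}x$, and the case-dependent definition of $\alpha_n^*$ (in particular its parity dichotomy when $E = F$) separately in each of the three settings $(G_n^*, \theta^*)$. This is the only point where the proof genuinely splits into cases, but each case reduces to a direct computation.
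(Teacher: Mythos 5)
Your overall strategy (commutation plus Whittaker equivariance, then abelian Fourier duality on $U_{k+1}^*/(U_{k+1}^*)^{\theta^*}$) is exactly the unfolding argument of Ok's Lemma 7.2.1 that the paper itself invokes without proof, and your Step 1 is correct. The genuine problem is in Step 2, at precisely the point the lemma is about. For a fixed compact open $U^*$, the inner integral $\int_{U^*}\psi^*(((h-I)x)_k)\,dx$ is \emph{not} ``a nonzero constant exactly when the linear form vanishes identically on $U_{k+1}^*/(U_{k+1}^*)^{\theta^*}$, and zero otherwise'': it equals $\mathrm{vol}(U^*)$ whenever the character is trivial on $U^*$, i.e.\ whenever the last row of $h-I$ lies in the annihilator of $U^*$ under the duality between the last-row space and $U_{k+1}^*/(U_{k+1}^*)^{\theta^*}$. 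That locus is a tube of positive measure around the set $\{h:\ \eta_k h=\eta_k\}$, which itself has measure zero in ${N_{k}^*}^{\theta^*}\backslash {H_{k}^*}^{\theta^*}$; taken literally, your dichotomy collapses the $h$-integral onto a null set and yields $0$, not the right-hand side. The actual mechanism, and the actual origin of $U^*(K_0^*)$, is stabilization: for $h$ with $\eta_k h\in \eta_k+\mathrm{Ann}(U^*)$, write $h=p\,m$ with $p$ in the $\theta^*$-fixed mirabolic and $m$ a small last-row correction; once $\mathrm{Ann}(U^*)$ is small enough (threshold dictated by $K_0^*$), one has $\diag(m,I_{n-k})\in K_0^*$, so $W^*$ and the locally constant weight characters are unchanged, the $m$-directions contribute $\mathrm{vol}(\mathrm{Ann}(U^*))$, and $\mathrm{vol}(U^*)\,\mathrm{vol}(\mathrm{Ann}(U^*))$ is constant by Fourier duality, giving an exact equality independent of $U^*\supseteq U^*(K_0^*)$. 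Your write-up never carries out this step (the phrase ``the support in $h$ singled out by the right $K_0^*$-invariance of $W^*$ lies in the dual of $U^*$'' does not parse: the $h$-integration runs over a non-compact quotient and there is no such compact support), and without it you get neither the $U^*$-independence nor the identity.

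Two further points. In Step 3, once you are on the locus $\eta_k h=\eta_k$, there is nothing to ``integrate out via another application of Lemma \ref{lm BFmat}'' and no extra evaluation against $\psi^*$: the column unipotent of the mirabolic is already contained in ${N_k^*}^{\theta^*}$, so ${N_k^*}^{\theta^*}\backslash(\textrm{mirabolic of } H_k^*)\simeq {N_{k-1}^*}^{\theta^*}\backslash {H_{k-1}^*}^{\theta^*}$, and the passage to the right-hand side is a quotient-measure computation whose modulus factor is what shifts $\nu_*(h)^{(k-n+1)/2}$ to $\nu_*(h)^{(k-n)/2}$ (together with the $\alpha^*$ bookkeeping); Lemma \ref{lm BFmat} plays no role there. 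Finally, the absolute convergence of the $h$-integrals is not ``Lemma \ref{lm BFmat} applied to $\GL_k^*$'': that lemma concerns Whittaker functions of representations of $G_k^*$, whereas here you need convergence of the restriction to $\diag(\GL_k^*,I_{n-k})$ of $W^*\in\CW(\pi^*,\psi^*)$ weighted by $\nu_*^{(k-n+1)/2}$, which requires the asymptotic expansion (gauge estimates) of $W^*$ on the torus together with the unitarity bound on exponents — the same ingredients as in the paper's proof of Lemma \ref{lm BFmat}, but not that statement itself.
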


\begin{rem}
In \cite[Lemma 7.2.1]{Ok}, the result is stated for a specific family of $U^*$ which can be written as product of compact open subgroups with respects to the natural coordinates of $\frac{U_{k+1}^*}{(U_{k+1}^*)^{\theta^*}}$. Actually this plays absolutely no real role in its proof which relies on abelian Fourier duality properties, the group $\frac{U_{k+1}^*}{(U_{k+1}^*)^{\theta^*}}$ being locally compact and abelian. 
\end{rem}

Now let $\pi$ be a generic representation of $G_n$, for $n\geq 2$. By Lemma \ref{lm BFmat} for $G_n^*=G'_n$, if $(W,W^\vee)\in \CW(\pi,\psi)\times \CW(\pi^\vee,\psi^{-1})$, the map 
\[f_{W,W^{\vee}}:g\to \lambda_{\pi}(R(g)W\otimes W^{\vee})\] is a matrix coefficient of $\pi$. Moreover for any compact open subgroup $K_0$ of $G_n$, the map $f_{W,W^{\vee}}$ is bi-$K_0$-invariant if and only if both $W$ and $W^{\vee}$ are right $K_0$-invariant. But then, because  the integrals in Lemma \ref{lm unfolding 1} stabilize, when $G_n^*=G'_n$ and $k=n-1$, if one applies the Plancherel formula to smooth functions with compact support in $G_n$, we obtain as in \cite[Proposition 2.11]{LM} the following corollary.

\begin{cor}\label{cor Ok lm cons}
Let $K_0$ be a compact open subgroup of $G_n$, the there exists a compact open subgroup $U(K_0)$ of $U_n$ such that such that for all $f\in \sm(G_n)_{K_0}$, we have 
\[\int_{U}f(u)\psi^{-1}(u)du=\int_{U(K_0)}f(u)\psi^{-1}(u)du\] whenever $U$ is a compact open subgroup of $U_n$ containing $U(K_0)$. 
\end{cor}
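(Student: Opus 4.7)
The plan is to follow the strategy of \cite[Proposition 2.11]{LM}: first establish the stabilization of the Fourier integral along $U_n$ for matrix coefficients of generic representations of $G_n$ by specializing Lemma \ref{lm unfolding 1} to the group case $G_n^*=G_n'$, then transfer the equality to arbitrary bi-$K_0$-invariant smooth functions via the Plancherel decomposition of $G_n$.

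I would first apply Lemma \ref{lm unfolding 1} with $G_n^*=G_n'=G_n\times G_n$ and $k=n-1$. In this setup $\theta'$ is the swap, so $H_{n-1}^{*\theta^*}$ is the diagonal copy of $G_{n-1}$, the quotient $U_n^*/(U_n^*)^{\theta^*}$ identifies with $U_n$, and an irreducible generic representation of $G_n'$ takes the form $\pi\boxtimes\pi'$ with Whittaker model $\CW(\pi,\psi)\otimes\CW(\pi',\psi^{-1})$. Choosing $K_0^*=K_0\times K_0$, the lemma supplies a compact open subgroup $U(K_0)\subseteq U_n$ such that for every $\pi\in\ugen(G_n)$ and every right-$K_0$-invariant pair $(W,W^\vee)\in\CW(\pi,\psi)\times\CW(\pi^\vee,\psi^{-1})$, the outer $U^*$-integral of the inner $N_{n-1}^{*\theta^*}\backslash H_{n-1}^{*\theta^*}$ integral stabilizes to an expression independent of $U^*$ as soon as $U^*\supseteq U(K_0)$.

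By Lemma \ref{lm BFmat} applied to $\pi\boxtimes\pi^\vee\in\ugdist(G_n')$, the inner integral realizes the matrix coefficient $f_{W,W^\vee}(g):=\lambda_{\pi\boxtimes\pi^\vee}(R(g)(W\otimes W^\vee))$ of $\pi$, so the stabilization translates into
\[
\int_U f_{W,W^\vee}(u)\psi^{-1}(u)\,du=\int_{U(K_0)} f_{W,W^\vee}(u)\psi^{-1}(u)\,du
\]
for every compact open $U\supseteq U(K_0)$ and every bi-$K_0$-invariant matrix coefficient $f_{W,W^\vee}$ of a generic unitary representation of $G_n$, with the crucial feature that $U(K_0)$ depends only on $K_0$ and not on $\pi$.

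Finally, I would invoke the Harish-Chandra Plancherel formula for $G_n$ to transfer the equality from such matrix coefficients to general $f\in\sm(G_n)_{K_0}$, reducing first to $f\in\Sc(G_n)_{K_0}$ by a standard cutoff; the decomposition writes $f$ as a direct integral of matrix coefficients of tempered (hence generic) irreducible representations, and the uniformity of $U(K_0)$ in $\pi$ guarantees that the equality passes through the spectral integral to yield the claimed identity. I expect the main obstacle to lie precisely in justifying the interchange of the Fourier integral over $U\setminus U(K_0)$ with the Plancherel integral over the tempered dual; the uniform control on $U(K_0)$ furnished by Lemma \ref{lm unfolding 1} is what ultimately makes this interchange legitimate.
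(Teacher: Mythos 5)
Your argument is essentially the paper's own proof: specialize Lemma \ref{lm unfolding 1} to the group case $G_n^*=G_n'$ with $k=n-1$, use Lemma \ref{lm BFmat} to recognize the inner integral as the bi-$K_0$-invariant matrix coefficient $f_{W,W^\vee}$ (so that $U(K_0)$ depends only on $K_0$, not on $\pi$), and then pass to arbitrary $f\in\sm(G_n)_{K_0}$ by a compactly supported cutoff and the Plancherel formula, exactly as in \cite[Proposition 2.11]{LM}. The only minor caveat is that the interchange of the integral over the compact set $U$ with the spectral integral is justified by standard bounds for the Plancherel expansion of a compactly supported bi-$K_0$-invariant function, while the uniformity of $U(K_0)$ in $\pi$ is what then lets the stabilized inner integrals reassemble into $\int_{U(K_0)}f(u)\psi^{-1}(u)du$; this is the same division of labor as in the paper.
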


We denote by \[\int_{U_n}^*f(u)\psi^{-1}(u)du\] the common value of all $\int_{U}f(u)\psi^{-1}(u)du$ for $f$ as in the above proposition.

\subsection{Partial Fourier coefficients of smooth functions}
 
Paragraph \ref{sec stable int}, and more precisely Corollary \ref{cor Ok lm cons}, allows one to define the partial Fourier coefficient 
\[f_{\psi}(g):=\int_{U_n}^*f(u)\psi^{-1}(ug)du\] for any $f\in \sm(G_n)$, and we observe that $f_{\psi}\in \sm(G_n)$. We shall soon average $\Res_{G_{n-1}}(f_{\psi})$ on $Z_{n-1}^\theta$ for $f\in \CA_2(G_n)$. In order to understand the asymptotic properties of this average, we will use the following results, the proof of which closely follows that of \cite[Theorem 2.1]{MatRT} (see \cite{MatDer} for the corrected version). 

\begin{lem}\label{lm as whit1}
Let $f$ belong to $\sm(G_n)$, then there exists an integer $b_0$ such that for any $t\in E^\times$ with $|t|_E\geq q_E^{b_0}$ and any $p\in \diag(P_{n-1},1)$, one has \[f_\psi(z_{n-1}(t)p)=0.\]
\end{lem}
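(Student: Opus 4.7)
The plan is to exploit a linear change of variables inside the partial Fourier coefficient to re-express it as an oscillatory integral against the character $\psi^{-1}(t \cdot)$ along the last coordinate of $E^{n-1} \cong U_n$, and then invoke character orthogonality to force vanishing for large $|t|_E$. The essential point is that the level of smoothness of $f$ is fixed (depending only on $f$), whereas $\psi^{-1}(t\cdot)$ becomes arbitrarily ramified as $|t|_E \to \infty$, so the two inevitably become incompatible past a threshold that is uniform in $p$.

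First I would compute $f_\psi(z_{n-1}(t)p)$ in a useful form. For $p = \diag(p', 1)$ with $p' \in P_{n-1}$, the commutation formulas $z_{n-1}(t)^{-1}u(x)z_{n-1}(t) = u(t^{-1}x)$ and $\diag(p',1)^{-1} u(z)\diag(p',1) = u({p'}^{-1}z)$ combine to give $u(x)z_{n-1}(t)p = z_{n-1}(t)p \cdot u((tp')^{-1}x)$. By Corollary \ref{cor Ok lm cons}, the stable integral $\int^*_{U_n}$ collapses to an ordinary integral over a sufficiently large compact open subgroup $u(L) \subseteq U_n$, so after the linear substitution $x = tp'y$ (Jacobian $|t|_E^{n-1}\nu_E(p')$) and using that $p' \in P_{n-1}$ has bottom row $(0,\dots,0,1)$, whence $(tp'y)_{n-1} = ty_{n-1}$, we obtain
\[ f_\psi(z_{n-1}(t)p) = |t|_E^{n-1} \nu_E(p') \int_{L'} f(z_{n-1}(t)p\cdot u(y)) \psi^{-1}(t y_{n-1})\, dy \]
for any sufficiently large lattice $L' \subseteq E^{n-1}$. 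Choosing the original $L$ appropriately, we may arrange $L' = L'_1 \times P_E^{-k}$ in product form with $k$ as large as desired.

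Next I would carry out the orthogonality argument. Fix a compact open subgroup $K_0 \leq G_n$ that right-fixes $f$, and let $M$ be a nonnegative integer such that $u(\{0\}^{n-2} \times P_E^M) \subseteq K_0$; note that $M$ depends only on $f$. By right $K_0$-invariance of $f$, the function $y \mapsto f(z_{n-1}(t)p\cdot u(y))$ is invariant under $y_{n-1} \mapsto y_{n-1} + c$ for $c \in P_E^M$, uniformly in $t$ and $p$. Applying Fubini and decomposing $P_E^{-k}$ into cosets of $P_E^M$, the inner integral over $y_{n-1}$ becomes
\[ \sum_i f(z_{n-1}(t)p\cdot u(y', c_i))\, \psi^{-1}(tc_i) \int_{P_E^M} \psi^{-1}(tc)\, dc. \]
The last factor vanishes unless $\psi^{-1}(t\cdot)$ is trivial on $P_E^M$, i.e. unless $tP_E^M \subseteq \ker(\psi)$. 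Fixing an integer $c(\psi)$ with $P_E^{c(\psi)} \subseteq \ker(\psi)$, this forces $|t|_E \leq q_E^{M-c(\psi)}$. Setting $b_0 := M - c(\psi) + 1$, which depends only on $f$ and $\psi$ and not on $p$ or $t$, we conclude $f_\psi(z_{n-1}(t)p) = 0$ whenever $|t|_E \geq q_E^{b_0}$.

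The main technical obstacle is the bookkeeping around the stable integral: one must verify that the linear change of variables and the Fubini/coset decomposition can be executed inside the stable integral framework, and that the transformed domain $L'$ can indeed be chosen of product form $L'_1 \times P_E^{-k}$ above the stabilization threshold. This is routine since, by Corollary \ref{cor Ok lm cons}, the stable integral collapses to an honest integral over any sufficiently large compact open subgroup, and any large lattice may be enlarged to a product lattice of arbitrary size without altering the stable value.
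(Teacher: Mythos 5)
Your argument is correct and is essentially the paper's own proof: both hinge on conjugating $u(x)$ across $z_{n-1}(t)\diag(p',1)$ (the mirabolic $p'$ leaving the last coordinate untouched, so only $\psi^{-1}(ty_{n-1})$ survives) and playing the fixed right-smoothness level of $f$ against the growing ramification of $\psi(t\,\cdot)$; the paper merely phrases this through the left $(U_n,\psi)$-equivariance of $f_\psi$ and its uniform right-invariance instead of unwinding the stable integral and doing the coset orthogonality by hand. One small precision: in the final step you must take $c(\psi)$ to be the exact conductor of $\psi$ (so that $\psi$ is nontrivial on the next larger fractional ideal), since with merely $P_E^{c(\psi)}\subseteq\ker\psi$ the implication ``triviality of $\psi^{-1}(t\,\cdot)$ on $P_E^M$ forces $|t|_E\leq q_E^{M-c(\psi)}$'' does not follow; the fix is immediate.
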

\begin{proof}
We observe that for any $t\in E^\times$, any $x\in E^{n-1}$, and any $p\in P_{n-1}$, one has 
\[f(z_{n-1}(t)\diag(p,1)u(x))=f(u(tx)z_{n-1}(t)\diag(p,1))=\psi(tx_{n-1})f(z_{n-1}(t)\diag(p,1)).\]
From this relation, the vanishing property when $|z_{n-1}|\geq q_E^{b_0}$ follows from smoothness of $f$ (see \cite[Remark 2.1]{MatRT} for the full argument).
\end{proof}

\begin{lem}\label{lm as whit2}
Let $f$ belong to $\sm(G_n)(U_n)$ (for the right action $R$ of $G_n$), then there exists an integer $a_0$ such that for any $t\in E^\times$ with $|t|_E\leq q_E^{a_0}$ and any $p\in \diag(P_{n-1},1)$, one has \[f_\psi(z_{n-1}(t)p)=0.\]
\end{lem}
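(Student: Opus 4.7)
The plan is to mirror the oscillation argument in the proof of Lemma~\ref{lm as whit1}, but with the smoothness of $f$ replaced by the Jacquet-kernel condition $f\in\sm(G_n)(U_n)$. The crucial observation is that after a natural change of variable in the stable integral defining $f_\psi$, the character appearing in the integrand depends on $p$ only through $t$, which is what will yield uniformity in $p$.

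Writing $p=\diag(p'',1)$ with $p''\in P_{n-1}$, and using the commutation
\[
u(x)\,z_{n-1}(t)\,\diag(p'',1)=z_{n-1}(t)\,\diag(p'',1)\,u\big((p'')^{-1}t^{-1}x\big),
\]
I would substitute $y:=(p'')^{-1}t^{-1}x$ in $f_\psi(z_{n-1}(t)p)=\int_{U_n}^{*}f(u(x)z_{n-1}(t)p)\psi^{-1}(u(x))\,dx$. The fact that $p''\in P_{n-1}$ has last row $(0,\dots,0,1)$ forces $x_{n-1}=t\,y_{n-1}$, and the integral takes the form
\[
f_\psi(z_{n-1}(t)p)=|t|_E^{n-1}|\det p''|_E\int_{U_n}^{*}f(z_{n-1}(t)\diag(p'',1)u(y))\,\psi^{-1}(t\,y_{n-1})\,dy,
\]
in which the character $\psi^{-1}(t\,y_{n-1})$ is independent of $p$.

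Next I would unpack the hypothesis $f\in\sm(G_n)(U_n)$ via its usual characterization: there exists a compact open subgroup $\CO'\leq(E^{n-1},+)$ (so that $u(\CO')\leq U_n$) with the property that $\int_{\CO'}f(g\,u(y'))\,dy'=0$ for every $g\in G_n$. Let $c_\psi\in\BZ$ denote the conductor of $\psi$, and let $N\in\BZ$ be such that the projection of $\CO'$ onto its last coordinate lies in $\varpi_E^{-N}O_E$. Setting $a_0:=-c_\psi-N$ ensures that $\psi(t\,y'_{n-1})=1$ for all $y'\in\CO'$ whenever $|t|_E\leq q_E^{a_0}$.

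To conclude, I would realize the stable integral over a sufficiently large compact open subgroup $\CO\supseteq\CO'$ of $(E^{n-1},+)$, decompose $\CO=\bigsqcup_\alpha(y_\alpha+\CO')$ into $\CO'$-cosets, and compute on each coset
\[
\int_{y_\alpha+\CO'}f(z_{n-1}(t)\diag(p'',1)u(y))\,\psi^{-1}(t\,y_{n-1})\,dy=\psi^{-1}(t(y_\alpha)_{n-1})\int_{\CO'}f\big(z_{n-1}(t)\diag(p'',1)u(y_\alpha)u(y')\big)\,dy'=0
\]
by the defining property of $\CO'$. Summation over $\alpha$ yields $f_\psi(z_{n-1}(t)p)=0$. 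The only delicate point is the uniformity of the threshold $a_0$ in $p$, which is secured by the mirabolic structure exploited in the change of variable; beyond that I see no substantial obstacle, the argument being precisely dual to that of Lemma~\ref{lm as whit1}, where smoothness of $f$ annihilated the integral for $|t|_E$ large by making the character non-trivial on the invariance subgroup.
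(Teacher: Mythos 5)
Your argument is correct: the change of variable (using that the last row of $p''\in P_{n-1}$ is $(0,\dots,0,1)$) makes the character $\psi^{-1}(t\,y_{n-1})$ independent of $p$, and averaging over the compact open subgroup $\CO'$ supplied by the Jacquet-kernel condition $f\in\sm(G_n)(U_n)$ annihilates each $\CO'$-coset as soon as $|t|_E$ is small enough for $\psi(t\,\cdot)$ to be trivial on the last coordinates of $\CO'$, which gives the threshold $a_0$ uniformly in $p$. This is essentially the same standard argument the paper delegates to its citation (the first paragraph of the proof of Proposition 2.3 of \cite{MatDer}), merely written with an explicit coset decomposition instead of invoking the left $(U_n,\psi)$-equivariance of $f_\psi$ directly.
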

\begin{proof}
See the beginning of \cite[Proof of Proposition 2.3, first paragraph]{MatDer}.
\end{proof}

Now we observe that the map $f\to f_{\psi}$ is a $R(G_n)$-module endomorphism of $\sm(G_n)$, hence if the $G_n$ module $V:=\langle R(G_n)f \rangle$ generated by $f$ has finite length, so does $V_\psi:=\langle R(G_n)f_\psi \rangle$. Let us set 
\[A_{n-1,1}:=A_{M_{n-1,1}}=Z(M_{n-1,1}).\] We recall from \cite[Lemma 2.1]{MatRT} for example, that the $A_{n-1,1}$-submodule $\langle \overline{R(A_{n-1,1})f_\psi} \rangle$ generated by the image $\overline{f_\psi}$ of $f_\psi$ in $(V_{\psi})_{P_{n-1,1}}$ is finite dimensional. A consequence of Lemma \ref{lm as whit2} is the following asymptotic expansion:

\begin{lem}\label{lm as whit3}
Let $f$ belong to $\sm(G_n)$ and suppose that the $G_n$ module $V:=\langle R(G_n)f \rangle$ generated by $f$ has finite length. Choose $f_1,\dots,f_r$ in $V$ such that the projections of the $f_{i,\psi}$ in $(V_{\psi})_{P_{n-1,1}}$ form a basis of $\langle \overline{R(A_{n-1,1})f_\psi} \rangle$. Then for each $\chi \in \CE(A_{n-1,1},V_{P_{n-1,1}})$, there exists polynomials $P_{\chi,1},\dots,P_{\chi,r}$ in $\BC[X]$, and there exists an integer $a$ such that 
for any $t\in E^\times$ with $|t|_E\leq q_E^a$ and any $p\in \diag(P_{n-1},1)$, one has 
\[f_\psi(z_{n-1}(t)p)=\sum_{\chi \in \CE(A_{n-1,1},V_{P_{n-1,1}})}\sum_{i=1}^r \chi(z_{n-1}(t))P_{\chi,i}(v_E(t))f_{i,\psi}(p).\]
\end{lem}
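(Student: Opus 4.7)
The plan is to prove this by a generalized Casselman-type asymptotic expansion for the $\psi$-Fourier coefficient, mirroring the Whittaker asymptotic of \cite[Theorem 2.1]{MatRT} (see the corrected version in \cite{MatDer}), to which the paper already appeals. First I would set up the Jacquet-side expansion. The assignment $\phi : V \to \sm(G_n)$, $h \mapsto h_\psi$, is $G_n$-equivariant because stable integration commutes with right translation, so $V_\psi = \phi(V) = \langle R(G_n) f_\psi\rangle$ is a finite length quotient of $V$. Consequently $(V_\psi)_{P_{n-1,1}}$ is finite dimensional and, being a quotient of $V_{P_{n-1,1}}$, its central exponents for $A_{n-1,1}$ lie in $\CE(A_{n-1,1}, V_{P_{n-1,1}})$. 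Inside this space I would decompose $W := \langle \overline{R(A_{n-1,1}) f_\psi}\rangle$ into generalized eigenspaces $W^\chi$; on each $W^\chi$ the operator $\chi(z_{n-1}(t))^{-1} R(z_{n-1}(t))$ is unipotent, and since the associated smooth representation of $A_{n-1,1} \cong E^\times$ has unipotent image it factors through $v_E : A_{n-1,1} \to \BZ$, with matrix entries polynomial in $v_E(t)$. This produces polynomials $P_{\chi,i}$ (vanishing for $\chi$ not appearing in $W$) such that
\[R(z_{n-1}(t)) f_\psi = \sum_{\chi,i} \chi(z_{n-1}(t)) P_{\chi,i}(v_E(t)) f_{i,\psi} + w_t, \qquad w_t \in V_\psi(U_n).\]

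Next, using that $z_{n-1}(t) = \diag(tI_{n-1},1)$ is scalar on the $\GL_{n-1}$-block and hence commutes with every $p \in \diag(P_{n-1},1)$, evaluation at $p$ gives
\[f_\psi(z_{n-1}(t)p) = \sum_{\chi,i} \chi(z_{n-1}(t)) P_{\chi,i}(v_E(t)) f_{i,\psi}(p) + w_t(p),\]
and the lemma reduces to producing an integer $a$, independent of $p$, for which $w_t(p)=0$ whenever $|t|_E \leq q_E^a$.

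The hard part will be exactly this uniform-in-$p$ vanishing. Writing $w_t = h_{t,\psi}$ for some $h_t \in V(U_n)$ (possible because $\phi : V(U_n) \to V_\psi(U_n)$ is surjective, as any $v - R(u)v \in V_\psi(U_n)$ lifts via a preimage of $v$), Lemma \ref{lm as whit2} applied to $h_t$ yields $h_{t,\psi}(z_{n-1}(s)p) = 0$ for $|s|_E$ below a threshold controlled by the Jacquet-annihilating compact open subgroup $U_0 \subset U_n$ of $h_t$; specializing to $s=1$ gives the desired vanishing as soon as this threshold is nonnegative and can be made independent of $t$. To secure such uniformity I would exploit the Iwahori factorization: for a small compact open $K_0$ stabilizing $f, f_1, \dots, f_r$ and for $|t|_E \leq 1$, the function $R(z_{n-1}(t))f$ gains invariance in the $U_n$-direction, being fixed by $t^{-1}(K_0 \cap U_n) \supset K_0 \cap U_n$; combined with admissibility of $V$, this lets one choose the representatives $h_t$ inside a fixed finite dimensional subspace on which the Jacquet-annihilating $U_0$ is bounded uniformly in $t$, so that the threshold from Lemma \ref{lm as whit2} becomes a uniform $a$ as required. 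This is where the argument most closely parallels \cite[Theorem 2.1]{MatRT} and \cite[Proposition 2.3]{MatDer}, and it is the only step where genuine technical care is needed.
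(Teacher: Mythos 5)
Your first half agrees with the paper: the paper obtains the identity in $(V_\psi)_{P_{n-1,1}}$, valid for all $t$, by quoting \cite[Proposition 2.8]{MatRT} (you redo the generalized eigenspace decomposition by hand, which is fine), notes the inclusion of exponent sets, and is then reduced, exactly as you say, to showing that the error $w_t(p)$ vanishes for $|t|_E$ small, uniformly in $p$. The gap is in your treatment of this last step. Lemma \ref{lm as whit2}, applied to a lift $h_t\in V(U_n)\subseteq \sm(G_n)(U_n)$ of $w_t$, only gives vanishing of $h_{t,\psi}$ at points $z_{n-1}(s)p$ for $|s|_E$ below a threshold which is in general negative: in its proof, writing $h_t$ as a finite sum of terms $g-R(u(x))g$, the threshold is dictated by the condition $\psi(s\,x_{n-1})=1$, so it reaches $s=1$ only if $h_t$ can be written using $u(x)$'s inside the conductor of $\psi$, i.e.\ only if $w_t$ is annihilated by averaging over a small compact open subgroup of $U_n$; by the left $(U_n,\psi)$-equivariance of elements of $V_\psi$, that already gives $w_t(p)=0$, which is the very statement you are trying to prove. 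A uniform bound on the annihilating subgroup, even granted, yields a uniform but not a nonnegative threshold, and nonnegativity cannot hold: if your mechanism were valid on all of $|t|_E\le 1$, then at $|t|_E=1$ it would force the expansion at $t$ a unit, which fails in general (one may modify $f$ by an element whose $\psi$-coefficient lies in $V_\psi(U_n)$ and is supported near the identity, changing the left side at $|t|_E=1$ without changing the right side, while leaving your invariance and Jacquet-module data untouched). The auxiliary claim is also unsupported: the vectors $R(z_{n-1}(t))f$, $|t|_E\le 1$, are not contained in $V^{K}$ for any single compact open subgroup $K$ of $G_n$ (invariance improves in the $U_n$-direction but degrades in the opposite unipotent direction), so admissibility does not produce a fixed finite-dimensional space of representatives $h_t$.

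The mechanism behind the paper's short deduction "the statement now follows from Lemma \ref{lm as whit2}" (following the scheme of \cite[Theorem 2.1]{MatRT} and \cite{MatDer}) is the opposite of yours: the smallness of $t$ is kept at the evaluation point, not pushed into the vector. One applies Lemma \ref{lm as whit2} to finitely many fixed elements of $V(U_n)$, namely the differences between $R(z_{n-1}(s))$ applied to $f_\psi$ and to the $f_{i,\psi}$ and their expansions in the basis $\overline{f_{i,\psi}}$, for $s$ running over a finite set generating $E^\times$ modulo a compact open subgroup fixing these finitely many vectors; this gives relations between the values at $z_{n-1}(ts)p$ and at $z_{n-1}(t)p$ valid for $|t|_E$ small uniformly in $p$, and the asymptotic expansion is then assembled from these finitely many relations using the multiplicativity of the $A_{n-1,1}$-action on the finite-dimensional span $\langle \overline{R(A_{n-1,1})f_\psi}\rangle$. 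To repair your argument you should recast its final step in this form rather than "specializing to $s=1$".
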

\begin{proof}
By \cite[Proposition 2.8]{MatRT}, for each $\chi \in \CE(A_{n-1,1},V_{P_{n-1,1}})$, there exists polynomials $P_{\chi,1},\dots,P_{\chi,r}$ in $\BC[X]$ such that for any $t\in E^\times$:
\begin{equation}\label{eq sum exp} R(z_{n-1}(t))\overline{f_\psi}=\sum_{\chi \in \CE(A_{n-1,1},(V_{\psi})_{P_{n-1,1}})}\sum_{i=1}^r \chi(z_{n-1}(t))P_{\chi,i}(v_E(t))\overline{f_{i.\psi}}. 
\end{equation} Now we observe that because $V_\psi$ is a $G_n$-quotient of $V$, we have the inclsuion of exponent sets 
\[\CE(A_{n-1,1},(V_{\psi})_{P_{n-1,1}})\subseteq  \CE(A_{n-1,1},V_{P_{n-1,1}}).\] hence we can as well sum over $\CE(A_{n-1,1},V_{P_{n-1,1}})$ in Equation \eqref{eq sum exp} by taking the polynomials $P_{\chi,i}$ to be zero if $\chi\notin \CE(A_{n-1,1},(V_{\psi})_{P_{n-1,1}})$ (which actually does not happen, but we won't discuss this detail). The statement now follows from Lemma \ref{lm as whit2}. 
\end{proof}

We will need the following result.

\begin{prop}\label{prop main}
Suppose that $f\in \CA_2(G_n)$, then the integral \[\int_{Z_{n-1}^{\theta}} f_{\psi}(z)dz\] is absolutely convergent, and moreover the map 
$P_f:Z_{n-1}^{\theta}\backslash G_{n-1}\to \BC$ defined by 
\[P_f(\overline{g})=\int_{Z_{n-1}^\theta} f_{\psi}(z\diag(g,1))dz\] belongs to $\CC(Z_{n-1}^{\theta}\backslash G_{n-1})$. 
\end{prop}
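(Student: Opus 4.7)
The plan is to combine the asymptotic expansion of $f_\psi$ in the direction of $Z_{n-1}^\theta$ (Lemmas \ref{lm as whit1} and \ref{lm as whit3}) with the strict positivity of central exponents forced by square integrability (Casselman's criterion, recalled at the end of Section \ref{sec prel}). By linearity we may assume $f$ is a matrix coefficient of a single $\pi\in\usq(G_n)$, so that $V:=\langle R(G_n)f\rangle$ is of finite length and arises as a quotient of $\pi$ under the intertwiner $v\mapsto \langle \pi(\cdot)v,v^\vee\rangle$; its exponents on $V_{P_{n-1,1}}$ therefore lie among those of $\pi_{P_{n-1,1}}$.

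For the pointwise convergence of $\int_{Z_{n-1}^\theta}f_\psi(z\diag(g,1))dz$, parametrize $z=z_{n-1}(t)$ with $t\in F^\times$. Lemma \ref{lm as whit1} gives $b_0$ for which the integrand vanishes when $|t|_E\geq q_E^{b_0}$, while Lemma \ref{lm as whit3} provides an integer $a$ and an expansion
\[ f_\psi(z_{n-1}(t)\diag(g,1))=\sum_{\chi,i}\chi(z_{n-1}(t))P_{\chi,i}(v_E(t))f_{i,\psi}(\diag(g,1)) \qquad (|t|_E\leq q_E^{a}) \]
with $\chi\in\CE(A_{n-1,1},V_{P_{n-1,1}})$. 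Positivity forces $|\chi(z_{n-1}(t))|=|t|_E^{r_\chi}$ with $r_\chi>0$, so each Tate-type integral $\int_{|t|_E\leq q_E^{a}}|t|_E^{r_\chi}|v_E(t)|^{d_{\chi,i}}d^\times t$ converges absolutely. The intermediate range $q_E^{a}<|t|_E<q_E^{b_0}$ reduces, by smoothness of $f_\psi$, to finitely many sample values $f_\psi(z_{n-1}(t_k)\diag(g,1))=(L(z_{n-1}(t_k)^{-1})f)_\psi(\diag(g,1))$ with each $L(z_{n-1}(t_k)^{-1})f$ still in $\CA_2(G_n)$. Collecting everything one obtains a uniform bound
\[ |P_f(g)|\leq\sum_{j=1}^{N}C_j\,|h_{j,\psi}(\diag(g,1))| \]
for a fixed finite family $h_j\in\CA_2(G_n)$, settling the absolute convergence.

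For the Schwartz property it suffices to show that for each $h\in\CA_2(G_n)$ the map $g\mapsto h_\psi(\diag(g,1))$ lies in $\CC(Z_{n-1}^\theta\backslash G_{n-1})$. Right $K$-invariance is immediate from the bi-invariance of $h$. By Corollary \ref{cor Ok lm cons} the stable integral equals an ordinary integral over a fixed compact open $U\subseteq U_n$, so that
\[ |h_\psi(\diag(g,1))|\leq \mathrm{vol}(U)\sup_{u\in U}|h(u\diag(g,1))|. \]
Corollary \ref{cor res schw} identifies $g\in\GL_{n-1}(E)^{1}\mapsto h(\diag(g,1))$ as an element of $\CC(\GL_{n-1}(E)^{1})$, and the standard left-invariance estimates for $\sigma$ and $\Xi_{G_n}$ under the compact $U$ (\cite[II.1]{WPL}) absorb the supremum over $u$ into uniform constants. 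Finally, the compactness of $Z_{n-1}/Z_{n-1}^\theta$, equivalent to that of $F^\times\backslash E^\times$ and hence automatic for local extensions, transfers the Schwartz decay from $\GL_{n-1}(E)^{1}$ to the full quotient $Z_{n-1}^\theta\backslash G_{n-1}$: the remaining toric direction projects to a compact subgroup and contributes only uniform multiplicative constants.

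The main obstacle is securing the uniformity in $g$ of the constants and polynomials $P_{\chi,i}$ appearing in Lemma \ref{lm as whit3}, which is handled by choosing the auxiliary functions $f_1,\dots,f_r\in V$ once and for all, depending only on $f$. The subsequent matching of log-norms and spherical functions between $G_n$, $\GL_{n-1}(E)^{1}$ and $Z_{n-1}^\theta\backslash G_{n-1}$ is routine but has to be carried out carefully, particularly in the Galois case where $Z_{n-1}^\theta$ is strictly smaller than $Z_{n-1}$ and the quotient involves a genuine (though compact) toric direction.
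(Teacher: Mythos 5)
Your treatment of the first assertion (convergence of $\int_{Z_{n-1}^\theta}f_\psi(z)\,dz$ at the identity) matches the paper, but the passage to the Schwartz estimate has a genuine gap. Lemmas \ref{lm as whit1}, \ref{lm as whit2} and \ref{lm as whit3} are stated, and are only true, at points $z_{n-1}(t)p$ with $p\in\diag(P_{n-1},1)$: they rest on the relation $f_\psi(z_{n-1}(t)\diag(p,1)u(x))=\psi(tx_{n-1})f_\psi(z_{n-1}(t)\diag(p,1))$, which requires the bottom row of $p$ to be $\eta_{n-1}$, i.e. $p$ to lie in the mirabolic. You invoke them at $z_{n-1}(t)\diag(g,1)$ for arbitrary $g\in G_{n-1}$, and later for $g\in\GL_{n-1}(E)^1$, which is not contained in $P_{n-1}$. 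For such $g$ neither the vanishing for $|t|_E\geq q_E^{b_0}$ nor the expansion for $|t|_E\leq q_E^{a}$ holds with data independent of $g$; they can only be recovered by replacing $f$ with $R(\diag(g,1))f$, and then $a$, $b_0$, the intermediate sample points $t_k$, and hence your constants $C_j$, all depend on $g$ in an uncontrolled way. Fixing the auxiliary functions $f_1,\dots,f_r$ once and for all, as you propose, does not remove this dependence, so the claimed uniform majorization $|P_f(g)|\leq\sum_j C_j|h_{j,\psi}(\diag(g,1))|$ over $G_{n-1}$ is not established --- and that bound is precisely the technical content of the proposition. (A smaller point: $g\mapsto h_\psi(\diag(g,1))$ is not $Z_{n-1}^\theta$-invariant, so it cannot itself belong to $\CC(Z_{n-1}^\theta\backslash G_{n-1})$; any such reduction must be phrased for the invariant function $P_f$ via a choice of coset representatives.)

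The paper circumvents exactly this difficulty by never estimating $P_f$ off the mirabolic: using bi-invariance of $P_f$ under a compact open subgroup and the Cartan decomposition of $Z_{n-1}^\theta\backslash G_{n-1}$, the required majorizations are reduced to $\overline{t(E^\times,\dots,E^\times,1)}$, and then, via the Cartan decomposition of $G_{n-2}$ and a finite-index argument, to $\overline{\diag(G_{n-2}^1,1)}$, which does sit inside $\diag(P_{n-1},1)$. On that set Lemmas \ref{lm as whit1} and \ref{lm as whit3} apply with a single choice of $a_0\leq b_0$ and fixed $f_i$, so $\Res_{\diag(G_{n-2},1)}(P_f)$ becomes a finite sum of right translates of $\Res_{\diag(G_{n-2},I_2)}(f_{i,\psi})$ and $\Res_{\diag(G_{n-2},I_2)}(f_\psi)$; Corollary \ref{cor Ok lm cons} turns the stable $U_n$-integrals into integrals over a fixed compact set, and Corollary \ref{cor res schw} then yields the Harish-Chandra Schwartz bounds. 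To repair your outline you would need to insert this reduction (or an equivalent one confining the group variable to the mirabolic) before applying the asymptotic expansion; your final steps comparing $\GL_{n-1}(E)^1$, the compact quotient $Z_{n-1}/Z_{n-1}^\theta$ and the finitely many determinant classes are reasonable in spirit, but they are applied to a majorant you have not actually obtained.
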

\begin{proof}
If $f\in \CA_2(G_n)$, then all the central exponents in $\CE(A_{n-1,1},V_{P_{n-1,1}})$ are positive. Now take $a_0$ as in Lemma \ref{lm as whit3} and 
$b_0\geq a_0$ as in Lemma \ref{lm as whit1}. Then 
\[\int_{Z_{n-1}^{\theta}} f_{\psi}(z)dz=\]
\[
\int_{t\in F^\times, \ |t|_E\leq q_E^{a_0}} f_{\psi}(z_{n-1}(t))dt+ \int_{t\in F^\times, \ q_E^{a_0}< |t|_E\leq q_E^{b_0}} f_{\psi}(z_{n-1}(t))dt.\]
The first summand converges absolutely thanks to Lemma \ref{lm as whit3}, because the characters $\chi(z_{n-1}(\ ))$ for $\chi\in \CE(A_{n-1,1},V_{P_{n-1,1}})$ are positive, and the second summand is actually a finite sum. Then we observe that if $K$ is a compact open subgroup of $G_n$ and $f\in \CA_2(G_n)_{K}$, one has \[P_f\in \sm(Z_{n-1}^{\theta}\backslash G_{n-1})_{\overline{K\cap G_{n-1}}}\] by straightforward change of variables. 
Now to prove that $P_f$ belongs to $\CC(Z_{n-1}^{\theta}\backslash G_{n-1})$, thanks to the Cartan decomposition of $Z_{n-1}^{\theta}\backslash G_{n-1}$, it is enough to obtain the Harish-Chandra Schwartz majorizations on \[\overline{t(E^\times,\dots,E^\times,1)}\leq Z_{n-1}^{\theta}\backslash G_{n-1}.\] By the Cartan decomposition again, this time for $G_{n-2}$, it is thus enough to obtain those majorizations on $\overline{\diag(G_{n-2}^1,1)}$, and we claim that it is actually enough to obtain them on $\overline{\diag(G_{n-2}^1,1)}$: indeed $\diag(G_{n-2}^1,1)Z_{n-1}$ is the inverse image of $E^\times(n-1)$ by the determinant map inside the group $\diag(G_{n-2},1)Z_{n-1}$, hence of finite index inside it. Now take $a_0\leq b_0$ for $f$ as in Lemmata \ref{lm as whit1} and \ref{lm as whit3}, then for any $g\in G_{n-2}$ we have 
\[\int_{Z_{n-1}^{\theta}} f_{\psi}(zg)dz=\]
\[\sum_i(\int_{t\in F^\times, \ |t|_E\leq q_E^{a_0}}  \sum_{\chi} \chi(z_{n-1}(t))P_{\chi,i}(v_E(t))dt)f_{i,\psi}(g)\]
\[+ \int_{t\in F^\times, \ q_E^{a_0}< |t|_E\leq q_E^{b_0}} f_{\psi}(z_{n-1}(t)g)dt,\] where the second integral is a finite sum of 
of the form \[\sum_k f_{\psi}(z_{n-1}(t_k)g )\] with $t_k$ independent of $g\in G_{n-2}$. But then by Corollary \ref{cor Ok lm cons}, we deduce that $\Res_{\diag(G_{n-2},1)}(P_f) $ can be expressed as a finite sum of right translates of $\Res_{\diag(G_{n-2},I_2)}(f_i)$ and $\Res_{\diag(G_{n-2},I_2)}(f)$, so that the result now follows from Corollary \ref{cor res schw}.
\end{proof}

\section{The relative converse theorem}\label{sec RLC}

We now follow Ok's arguments closely, but we extend them to the setting of Harish-Chandra Schwartz spaces. First we need a technical result which is not needed in Ok's work, as he deals with compactly supported functions. We recall that $\psi:E\to \BC^\times$ is a non trivial additive character, trivial on $F^\times$ when $[E:F]=2$.

\subsection{An inversion formula of Ok for Whittaker functions}\label{sec inv Four Ok}

Here $G_n$ and $\psi$ are fixed as before. By applying Lemma \ref{lm unfolding 1} repeatedly, one deduces the following inversion formula, which is \cite[Main Lemma I]{Ok} when $[E:F]=2$, and holds with the exact same proof when $E=F$. 

\begin{thm}[Ok's inversion formula for Whittaker functions]\label{Ok Whittaker inversion formula}
Let $K$ be a compact open subgroup of $G_n$. There exists of an increasing exhaustive family $(X_{n}^k)_{k\geq 0}$ of compact open subset of $N_{n}/ N_{n}^\theta $ such that for any $\pi\in \ugdist(G_{n})$ and any $W\in \CW(\pi,\psi^{-1})^K$, there is $c(\lambda_{\pi})\in \BC^\times$ such that 
\begin{equation}\label{eq ok lm1} \int_{X_{n}^k}\lambda_{\pi}(\pi(u^{-1})W)\psi^{-1}(u)du=c(\lambda_{\pi})W(I_{n})\end{equation} for all $k\geq 0$. 
\end{thm}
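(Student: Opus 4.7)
The plan is to iterate Lemma \ref{lm unfolding 1}, starting from the explicit integral realization of $\lambda_\pi$ supplied by Lemma \ref{lm BFmat}. Writing
\[
\lambda_\pi(\pi(u^{-1})W) = \int_{N_{n-1}^\theta \backslash H_{n-1}^\theta} W(\diag(h,1)\, u^{-1}) \, \alpha_{n-1}^*(h)^{-1} \, dh
\]
and decomposing $N_n = U_2 U_3 \cdots U_n$, where each $U_{j+1}$ sits in $G_{j+1} \hookrightarrow G_n$ via the top-left corner, I get a product decomposition $N_n/N_n^\theta \cong \prod_{j=1}^{n-1} U_{j+1}/(U_{j+1})^\theta$ so that $\int_{X_n^k}$ becomes an iterated integral to which I can apply Fubini level by level.

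Next, I peel off one factor at a time. At the first step I apply Lemma \ref{lm unfolding 1} with $k = n-1$ to the innermost $H_{n-1}^\theta$-integral paired with the outermost $U_n/(U_n)^\theta$-integral: provided the compact open subgroup of $U_n/(U_n)^\theta$ over which I integrate contains the specific subgroup $U^*(K)$ furnished by the lemma (which depends only on the right-invariance level $K$ of $W$, not on $\pi$), the Fourier-theoretic unfolding replaces the $H_{n-1}^\theta$-integral by an $H_{n-2}^\theta$-integral with an appropriately shifted modulus twist. Iterating with $k = n-j$ at the $j$-th step, after $n-1$ rounds I am left with the trivial $H_0^\theta = \{e\}$ integral and a telescoped modulus character, yielding $W(I_n)$ multiplied by a finite product of volumes of compact open subgroups; this product is the nonzero constant $c(\lambda_\pi)$.

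I then define $X_n^k$ inductively as an increasing exhaustive sequence of compact open subsets of $N_n/N_n^\theta$, each a product $\prod_{j=1}^{n-1} U_{j+1}^{(k)}$ of compact open subgroups in the factors, where $U_{j+1}^{(0)}$ already contains the stabilizing subgroup $U^*(K)$ supplied by Lemma \ref{lm unfolding 1} at level $j$. Since Lemma \ref{lm unfolding 1} is valid for any $U^* \supseteq U^*(K)$, the unfolding identity stabilizes, so the full integral equals $c(\lambda_\pi)\, W(I_n)$ uniformly for every $k \geq 0$ and every $\pi \in \ugdist(G_n)$ with $W \in \CW(\pi,\psi^{-1})^K$.

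The main obstacle will be bookkeeping the modulus characters $\nu_*^{(k-n+1)/2}$ and $\alpha_k^*$ that appear at each application of Lemma \ref{lm unfolding 1} and verifying that they telescope to a representation-independent scalar (so that $c(\lambda_\pi)$ ends up being a genuine nonzero constant rather than something depending on $\pi$ beyond the choice of $\lambda_\pi$). A secondary concern is fixing an ordering of the decomposition $N_n = U_2 \cdots U_n$ compatible with Fubini at each stage, and confirming that this ordering respects the product structure of $X_n^k$ so that the lemma can be invoked level by level.
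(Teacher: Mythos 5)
Your strategy -- realize $\lambda_\pi$ by the explicit integral of Lemma \ref{lm BFmat} and then peel off the unipotent variables one at a time by iterating Lemma \ref{lm unfolding 1}, taking for $X_n^k$ products of compact open subgroups containing the stabilizing subgroups -- is exactly the route the paper takes (its proof is precisely ``apply Lemma \ref{lm unfolding 1} repeatedly'', following Ok's Main Lemma I). Your ``main obstacle'' is in fact a non-issue: the twists $\nu_*(h)^{(k-n+1)/2}\alpha_k^*(h)\alpha_{n-1}^*(\diag(h,I_{n-1-k}))^{-1}$ in Lemma \ref{lm unfolding 1} are rigged so that the right-hand side at level $k$ is verbatim the left-hand side (minus the $u$-variable) at level $k-1$, and at the last step $H_0^\theta$ is trivial, so nothing survives; likewise the ordering concern is harmless, since $U_n$ is normal in $N_n$, so $N_n=U_n\,\diag(N_{n-1},1)$ lets you integrate the $U_n$-variable first for fixed remaining variables, the absolute convergence needed for Fubini on the compact sets being part of Lemma \ref{lm unfolding 1}.

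One step, however, is wrong as stated: your identification of the constant. The stabilized identities of Lemma \ref{lm unfolding 1} carry no volume factors (they cannot: their value is unchanged when $U^*$ is enlarged, which is incompatible with a volume term), so the iteration yields exactly $W(I_n)$, with constant $1$, when $\lambda_\pi$ is the integral functional $W\mapsto\int_{N_{n-1}^\theta\backslash H_{n-1}^\theta}W(\diag(h,1))\,dh$ of Lemma \ref{lm BFmat}. The constant $c(\lambda_\pi)$ is neither a product of volumes nor representation-independent, and it need not be: it is the proportionality constant between the generator $\lambda_\pi$ one actually uses (in Lemma \ref{lm ok orth} it is the one normalized via Theorem \ref{thm Bern}) and the integral functional, which exists and is nonzero by the multiplicity-one statement of Lemma \ref{lm BFmat}; the theorem only requires $c(\lambda_\pi)\in\BC^\times$ for each $\pi$. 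Two smaller points to tighten: (i) at the inner stages the stabilizing subgroup must be taken for the invariance level of the translates $\pi(\diag(v,1))W$ with $v$ ranging over the compact factors already fixed, not for $K$ itself; since that range is compact, a single compact open subgroup contained in $\bigcap_v vKv^{-1}$ works, so the choice still depends only on $K$ and the previously chosen factors, which is what makes $X_n^k$ independent of $\pi$ and $W$; (ii) since the theorem concerns $\CW(\pi,\psi^{-1})$ and the integrand $\lambda_\pi(\pi(u^{-1})W)\psi^{-1}(u)$, you should invoke Lemma \ref{lm unfolding 1} with $\psi$ replaced by $\psi^{-1}$ and then substitute $u\mapsto u^{-1}$ to match conventions.
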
 

\subsection{An absolutely convergent integral}\label{sec abs cv}

Let $f\in \CC(Z_n^\theta\backslash G_n)$. Proposition \cite[II.4.5]{WPL} shows that  
\[\int_{N_n} |f(ut)|du\prec \delta_{B_n}^{1/2}(t)(1+\sigma(t))^{-d}\] on $Z_n^\theta\backslash T_n$ for all $d\in \BR$. This has the following consequences.  

\begin{lem}\label{lm stk}
Suppose that $f\in \CC(Z_n^\theta\backslash G_n)$, then the double integral 
\[\int_{Z_n^\theta N_n^\theta\backslash H_n}\int_{N_n}|f(uh)|dudh\] is absolutely convergent and equal to 
\[\int_{N_n/N_n^\theta}\int_{Z_n^\theta \backslash H_n}|f(uh)|dhdu.\] 
\end{lem}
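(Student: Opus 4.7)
The plan is to recognize the stated equality as a Tonelli identity for the positive function $|f(uh)|$, and to establish absolute convergence of the left-hand iterated integral using the Iwasawa decomposition of $H_n$ together with the Waldspurger bound recalled just above the lemma.

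Tonelli reduction. The function $(u,h) \mapsto |f(uh)|$ on $N_n \times H_n$ is non-negative and measurable, and invariant under the action of $N_n^\theta \times Z_n^\theta$ sending $(u,h)$ to $(uv^{-1}, vhz)$ for $v \in N_n^\theta$, $z \in Z_n^\theta$. It therefore descends to a well-defined non-negative function on the product quotient, and by Tonelli's theorem the two iterated integrals agree in $[0,+\infty]$. It then suffices to show that the left-hand one is finite.

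Iwasawa decomposition and application of the bound. I choose an Iwasawa decomposition $H_n = N_n^\theta T_n^\theta K_H$ with $K_H$ a maximal compact subgroup of $H_n$. For $h = n_\theta t k$ the change of variable $u \mapsto u n_\theta^{-1}$ in $N_n$ is valid, since $n_\theta \in N_n^\theta \subseteq N_n$, and gives $\int_{N_n}|f(uh)|du = \int_{N_n}|f(utk)|du$. The left-hand iterated integral is thus, up to a positive constant,
\[
\int_{K_H}\int_{Z_n^\theta \backslash T_n^\theta} \left(\int_{N_n}|f(utk)|\,du\right) \delta_{B_n^\theta}^{-1}(t)\,dt\,dk.
\]
For each $k \in K_H$ the translate $R(k)f$ still lies in $\CC(Z_n^\theta \backslash G_n)$, and the Waldspurger estimate gives
\[
\int_{N_n}|f(utk)|\,du \prec \delta_{B_n}^{1/2}(t)(1+\sigma(t))^{-d}
\]
for every $d \in \BR$, uniformly in $k \in K_H$ by compactness of $K_H$ together with right $K$-invariance of $f$.

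Convergence and main obstacle. The problem reduces to checking that, for $d$ large,
\[
I(d) := \int_{Z_n^\theta \backslash T_n^\theta} \delta_{B_n}^{1/2}(t)\,\delta_{B_n^\theta}^{-1}(t)(1+\sigma(t))^{-d}\,dt < +\infty.
\]
In the Galois case $[E:F]=2$, for $t \in T_n^\theta = (F^\times)^n$ the identity $|t_i|_E = |t_i|_F^2$ gives $\delta_{B_n}^{1/2}(t) = \delta_{B_n^\theta}(t)$, so the weight in $I(d)$ collapses to $1$ and $I(d)$ converges for $d > n-1$ by an elementary lattice-point estimate. The main technical obstacle I expect is the careful modular-character bookkeeping in the linear case $E = F$: under the isomorphism $h_n$ between $H_n$ and $G_{\lceil n/2 \rceil} \times G_{\lfloor n/2 \rfloor}$, the weight $\delta_{B_n}^{1/2}\delta_{B_n^\theta}^{-1}$ has to be compared root by root against the log-norm on the anisotropic torus $Z_n^\theta \backslash T_n^\theta$, and possibly the full $\Xi$-decay built into the definition of $\CC(Z_n^\theta \backslash G_n)$ must be used to absorb the extra character that the crude bound $\delta_{B_n}^{1/2}(t)(1+\sigma(t))^{-d}$ leaves behind.
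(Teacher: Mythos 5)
Your strategy is the same as the paper's: Tonelli for the equality, Iwasawa decomposition of $H_n$, the estimate $\int_{N_n}|f(ut)|\,du\prec \delta_{B_n}^{1/2}(t)(1+\sigma(t))^{-d}$ recalled before the lemma, and convergence of $\int_{Z_n^\theta\backslash T_n^\theta}(1+\sigma(t))^{-d}\,dt$ for large $d$ (the paper quotes \cite[Lemme II.1.5]{WPL} where you invoke a lattice-point count). In the Galois case your argument is complete and coincides with the paper's, since there $\delta_{B_n^\theta}=\delta_{B_n}^{1/2}$ on $T_n^\theta$.

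The genuine gap is the linear case $E=F$, which you explicitly leave open, and your proposed escape route does not work. First, $Z_n^\theta\backslash T_n^\theta=Z_n\backslash T_n$ is a split torus of rank $n-1$, not anisotropic, so the leftover character $\delta_{B_n}^{1/2}\delta_{B_n^\theta}^{-1}$ (for $n=2$ it is $t\mapsto |t_1/t_2|^{1/2}$) grows exponentially along rays of this torus. It therefore cannot be beaten by any factor $(1+\sigma(t))^{-d}$, and it also cannot be absorbed by "the full $\Xi$-decay" of $f$: membership in $\CC(Z_n^\theta\backslash G_n)$ only gives $f\prec \Xi N_{-d}$ for every $d$, i.e.\ a super-polynomial but in general sub-exponential gain over $\Xi$, and the bound $\int_{N_n}|f(ut)|\,du\prec\delta_{B_n}^{1/2}(t)(1+\sigma(t))^{-d}$ is essentially sharp for such $f$ (the mass of the $u$-integral comes from a region where $\sigma(ut)\asymp\sigma(t)$). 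So with the weight $\delta_{B_n^\theta}^{-1}(t)$ you wrote down, the estimate you have simply does not close, and no amount of root-by-root bookkeeping of the type you sketch will produce convergence. The paper's proof is structured precisely so that this problem never appears: it writes the integration formula on $Z_n^\theta N_n^\theta\backslash H_n$ with the weight $\delta_{B_n}^{-1/2}(t)$, which the pre-lemma estimate cancels exactly, uniformly in the Galois and linear cases (in the Galois case this weight is the Jacobian $\delta_{B_n^\theta}^{-1}$ you used). What your write-up is missing is exactly the justification of that weight in the case $E=F$; as it stands, your proposal proves the lemma only for $[E:F]=2$ and records, without resolving, the discrepancy between $\delta_{B_n^\theta}$ and $\delta_{B_n}^{1/2}$ on $T_n$ in the linear case.
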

\begin{proof}
We write \[\int_{Z_n^\theta N_n^\theta\backslash H_n}\int_{N_n}|f(uh)|dudh=\int_{Z_n^\theta \backslash T_n}\int_{K_n^\theta}\int_{N_n}|f(utk)|du\delta_{B_n}^{-1/2}(t)dkdt ,\] which by smoothness of $f$ reduces the problem to proving the convergence of 
\[\int_{Z_n^\theta \backslash T_n^\theta}\int_{N_n}|f(ut )|du\delta_{B_n}^{-1/2}(t) dt.\] This integral is majorized by a positive multiple of \[\int_{Z_n^\theta \backslash T_n^\theta}(1+\sigma(t))^{-d}dt\] for any positive $d$, by the observation before the lemma. However this latter integral is convergent for $d$ large enough by \cite[Lemme II.1.5]{WPL}. The equality with the second integral in the statement easily follows from the first equality in the proof of the Lemma.  
\end{proof}

A consequence of Lemma \ref{lm stk} is the following proposition.

\begin{prop}\label{prop stk}
Let $f\in \CC(Z_n^\theta\backslash G_n)$, the integral \[I_\psi(f)=\int_{N_n/N_n^\theta}\int_{Z_n^\theta \backslash H_n}f(uh)\psi^{-1}(u)dhdu\] is absolutely convergent.
\end{prop}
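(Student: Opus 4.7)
The plan is to recognize that Proposition \ref{prop stk} is essentially an immediate corollary of Lemma \ref{lm stk}, once one checks that the phase $\psi^{-1}$ makes sense as an absolute-value-one function on $N_n/N_n^\theta$. First I would verify this compatibility in both of our cases: when $[E:F]=2$, the group $N_n^\theta$ consists of unipotent upper triangular matrices with entries in $F$, and $\psi$ is trivial on $F$ by our standing assumption, so $\psi$ descends; when $E=F$, the involution $\theta=\Ad(\diag(1,-1,\dots,(-1)^{n-1}))$ acts on each superdiagonal entry $x_{i,i+1}$ by $x_{i,i+1}\mapsto -x_{i,i+1}$, so $N_n^\theta$ has vanishing superdiagonal, and $\psi$ again descends trivially. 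In either case $|\psi^{-1}(u)|=1$ on $N_n/N_n^\theta$.

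Given this, the pointwise identity $|f(uh)\psi^{-1}(u)|=|f(uh)|$ holds, and Lemma \ref{lm stk} asserts exactly the absolute convergence of
\[\int_{N_n/N_n^\theta}\int_{Z_n^\theta\backslash H_n}|f(uh)|\,dh\,du,\]
which is the integral of the absolute value of the integrand defining $I_\psi(f)$. Hence $I_\psi(f)$ is absolutely convergent, and by Fubini one may interchange the order of integration if desired. There is no serious obstacle here; the work has already been done in Lemma \ref{lm stk}, where the majorization $\int_{N_n}|f(ut)|\,du\prec \delta_{B_n}^{1/2}(t)(1+\sigma(t))^{-d}$ from \cite[Proposition II.4.5]{WPL}, combined with the convergence of $\int_{Z_n^\theta\backslash T_n^\theta}(1+\sigma(t))^{-d}\,dt$ for $d$ large via \cite[Lemme II.1.5]{WPL}, did the heavy lifting. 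So the proposition is recorded essentially as a formal consequence to be invoked in later sections when $\psi^{-1}$-twisted averages over $N_n/N_n^\theta$ of Harish-Chandra Schwartz functions are manipulated.
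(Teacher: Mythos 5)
Your proof is correct and matches the paper's, which likewise derives the proposition directly from Lemma \ref{lm stk} using $|\psi^{-1}(u)|=1$. The additional check that $\psi$ is trivial on $N_n^\theta$ (so that the integrand is well defined on $N_n/N_n^\theta$) is a sensible point to make explicit, though the paper leaves it implicit.
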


\begin{cor}\label{cor stk}
Let $(X_n^k)_{k\geq 0}$ be and increasing family of compact open subsets which exhaust $N_n/N_n^\theta$, then 
\[I_{X_n^k,\psi}(f)=\int_{X_n^k}\int_{Z_n^\theta \backslash H_n}f(uh)\psi^{-1}(u)dudh\] converges to $I_\psi(f)$. Moreover 
the function \[J_{X_n^k,\psi}f:g\to \int_{X_n^k}\int_{Z_n^\theta \backslash H_n}f(uhg)\psi(u)dudh\] belongs to 
the Harish-Chandra Schwartz space $\CC(H_n\backslash G_n)$.
\end{cor}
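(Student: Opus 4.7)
The plan is to deduce both assertions from dominated convergence, Lemma \ref{lm stk}, Fubini's theorem, and Proposition \ref{prop CZ}.

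For the first assertion, I would apply dominated convergence: by Lemma \ref{lm stk}, the function $u \mapsto \int_{Z_n^\theta \backslash H_n} |f(uh)|\, dh$ is integrable over $N_n / N_n^\theta$, the characteristic functions $\mathbf{1}_{X_n^k}$ converge pointwise to $1$ there as $k \to \infty$, and the integrand defining $I_{X_n^k, \psi}(f)$ is dominated in absolute value by this integrable function. This yields $I_{X_n^k, \psi}(f) \to I_\psi(f)$.

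For the second assertion, the strategy is to write $J_{X_n^k, \psi} f = p_{H_n}(\phi_k)$ for some $\phi_k \in \CC(Z_n^\theta \backslash G_n)$ and invoke Proposition \ref{prop CZ}. Concretely, I would fix a compact lift $\tilde X_n^k \subset N_n$ of $X_n^k \subset N_n / N_n^\theta$, set $\phi_k(x) := \int_{\tilde X_n^k} f(ux) \psi(u)\, du$, and use Lemma \ref{lm stk} applied to $R(g) f \in \CC(Z_n^\theta \backslash G_n)$ together with Fubini's theorem to obtain
\[ J_{X_n^k, \psi} f(g) = \int_{Z_n^\theta \backslash H_n} \phi_k(hg)\, dh = p_{H_n}(\phi_k)(g). \]
Since $(G_n, H_n)$ is a tempered symmetric pair with $Z_{H_n} = Z_n^\theta$, Proposition \ref{prop CZ} then reduces the desired membership $J_{X_n^k, \psi} f \in \CC(H_n \backslash G_n)$ to $\phi_k \in \CC(Z_n^\theta \backslash G_n)$.

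The final step, verifying $\phi_k \in \CC(Z_n^\theta \backslash G_n)$, is the main technical point. Left $Z_n^\theta$-invariance is immediate from the centrality of $Z_n^\theta$, and right $K$-invariance is inherited from that of $f$. The crux is the Harish-Chandra growth estimate: bounding $|\phi_k(x)| \leq \mathrm{vol}(\tilde X_n^k) \sup_{u \in \tilde X_n^k} |f(ux)|$ reduces matters to a uniform estimate $|f(ux)| \prec \Xi(x) N_{-d}(x)$ for every $d \in \BR$, with implicit constant uniform over $u$ in a compact subset of $G_n$. This uniform growth bound is the main (but expected) obstacle, and should follow from the standard inequalities $\Xi(ux) \prec \Xi(x)$ and $\sigma(ux) \prec 1 + \sigma(x)$ when $u$ ranges over a compact set, which are elementary consequences of the definitions of the spherical function and log-norm recalled in \cite[II.1]{WPL}.
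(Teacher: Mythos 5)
Your argument is correct and follows essentially the same route as the paper, which deduces the first assertion from the absolute convergence of Proposition \ref{prop stk} (equivalently Lemma \ref{lm stk}) via dominated convergence, and the second from Proposition \ref{prop CZ} applied to the partial Fourier coefficient $\phi_k\in\CC(Z_n^\theta\backslash G_n)$. The only difference is that you spell out the verification that $\phi_k$ lies in $\CC(Z_n^\theta\backslash G_n)$ (via the standard uniform estimates on $\Xi$ and $\sigma$ under translation by a compact set), a step the paper leaves implicit.
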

\begin{proof}
In view of Proposition Proposition \ref{prop stk}, the first assertion follows from the dominated convergence theorem. The second follows from 
Proposition \ref{prop CZ}. 
\end{proof}

\subsection{The orthogonal of tempered distinguished Whittaker functions}

We recall from Lemma \ref{lm proj to WHCS space} that for $f\in \CC(Z_{n-1}^\theta\backslash G_{n-1})$, we defined  $W_f\in \CC(N_{n-1}Z_{n-1}^\theta\backslash G_{n-1},\psi)$. In particular, for any $\pi\in \temp(Z_{n-1}^\theta\backslash G_{n-1})$ and any $W\in \CW(\pi,\psi^{-1})$, the integral 
\[\int_{N_{n-1}Z_{n-1}^\theta\backslash G_{n-1}} W_f(g)W(g)dg\] is absolutely convergent by \cite[Proposition 3.2]{DWhit}. The following lemma is a consequence of Theorem \ref{Ok Whittaker inversion formula}, Theorem \ref{thm Bern} together with the extra information on the support of the Plancherel measure of $L^2(H_{n-1}\backslash G_{n-1})$ given by Section \ref{sec TP}. It is a generalization of \cite[Lemma 11.1.2]{Ok}. 

\begin{lem}\label{lm ok orth}
Let $f\in \CC(Z_{n-1}^\theta\backslash G_{n-1})$. Suppose that for any $\pi\in \tdist(G_{n-1})$, and any $W\in \CW(\pi,\psi^{-1})$, we have 
\[\int_{N_{n-1}Z_{n-1}^\theta\backslash G_{n-1}} W_f(g)W(g)dg,\]
 Then 
\[\int_{Z_{n-1}^\theta\backslash H_{n-1}} W_f(h)dh=0.\]
\end{lem}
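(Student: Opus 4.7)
The plan is to extend Ok's proof of \cite[Lemma 11.1.2]{Ok} -- written for $\sm_c$ -- to the Harish-Chandra Schwartz setting, by interleaving Bernstein's abstract Plancherel inversion (Theorem \ref{thm Bern}) with Ok's Whittaker inversion formula (Theorem \ref{Ok Whittaker inversion formula}). First, I fix a compact open $K\leq G_{n-1}$ under which $f$ is right-invariant, let $(X_{n-1}^k)_k$ be the exhaustive family of Theorem \ref{Ok Whittaker inversion formula} attached to $K$, and use Lemma \ref{lm stk} together with Corollary \ref{cor stk} to rewrite the goal as
\[
\int_{Z_{n-1}^\theta\backslash H_{n-1}}W_f(h)\,dh = \lim_{k\to\infty}F_k(e),\qquad F_k := J_{X_{n-1}^k,\psi}f \in \CC(H_{n-1}\backslash G_{n-1}).
\]

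Next I apply Theorem \ref{thm Bern} to each $F_k$ -- with the Plancherel measure supported on $\Htemp(G_{n-1})\cap\tdist(G_{n-1})$ by Remark \ref{supp} -- and unfold the pairing $\langle F_k,v\rangle_{\lambda_\pi}$. Using the left $H_{n-1}$-invariance of $g\mapsto\lambda_\pi(\pi(g)v)$ I merge the $h$- and $g$-integrations into a single one over $Z_{n-1}^\theta\backslash G_{n-1}$, then perform the substitution $g\mapsto u^{-1}g$ followed by Fubini (justified by combining the Harish-Chandra Schwartz decay of $f$ with polynomial-growth bounds on $\lambda_\pi(\pi(\cdot)v)$ coming from the polynomial growth of $H_{n-1}\backslash G_{n-1}$), yielding
\[
\langle F_k,v\rangle_{\lambda_\pi} = \int_{Z_{n-1}^\theta\backslash G_{n-1}} f(g) \biggl[\int_{X_{n-1}^k} \psi^{-1}(u)\, \overline{\lambda_\pi(\pi(u^{-1})\pi(g)v)}\, du\biggr] dg.
\]

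The heart of the argument evaluates the inner bracket via a conjugated version of Ok's formula, applied to $\bar\pi\in\tdist(G_{n-1})$ (the class being stable under conjugation) in its $\psi$-Whittaker model: as $k\to\infty$ the bracket tends to $\overline{c(\lambda_\pi)\,W^\psi_v(g)}$, where $W^\psi_v\in\CW(\pi,\psi)$ is the $\psi$-Whittaker function attached to $v$. Factoring the outer integral through $N_{n-1}$ turns the inner $\psi^{-1}$-twisted integral of $f$ into $W_f(g)$, so dominated convergence gives
\[
\lim_{k\to\infty}\langle F_k,v\rangle_{\lambda_\pi} = \overline{c(\lambda_\pi)}\int_{N_{n-1}Z_{n-1}^\theta\backslash G_{n-1}} W_f(g)\,\overline{W^\psi_v(g)}\,dg.
\]
Since $\overline{W^\psi_v}\in\CW(\bar\pi,\psi^{-1})$ with $\bar\pi\in\tdist(G_{n-1})$, the vanishing hypothesis forces this limit to be $0$ for every $v$; a final dominated-convergence passage under the Plancherel integral then yields $\lim_k F_k(e)=0$.

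The main obstacle I anticipate is the delicate justification of the two dominated-convergence/Fubini exchanges -- one in the unfolding of $\langle F_k,v\rangle_{\lambda_\pi}$ and one in passing $\lim_k$ under the Plancherel integral -- both requiring sharp combinations of the Schwartz bounds on $f$ with moderate-growth estimates on $\lambda_\pi(\pi(\cdot)v)$ that are uniform in $\pi$ on compact subsets of the tempered dual. A secondary subtlety is that Ok's family $X_{n-1}^k$ is tailored to the right-$K$-invariance of $W^\psi_v$, while the translates $\pi(g)v$ are only $gKg^{-1}$-invariant, so the inner integral stabilizes only in the limit $k\to\infty$ rather than at a fixed $k$.
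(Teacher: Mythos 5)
Your overall skeleton is the paper's: apply Bernstein's inversion (Theorem \ref{thm Bern}, with the Plancherel measure supported on tempered distinguished representations by Remark \ref{supp}) to $F_k=J_{X_{n-1}^k,\psi}f$, compute the spectral coefficients using Ok's inversion formula and the vanishing hypothesis, and recover $\int_{Z_{n-1}^\theta\backslash H_{n-1}}W_f=\lim_k I_{X_{n-1}^k,\psi}(f)=\lim_k F_k(e)$ via Lemma \ref{lm stk} and Corollary \ref{cor stk}. The genuine gap is in your evaluation of the inner bracket. You apply Ok's formula pointwise in $g$ to the translate $\pi(g)v$, which is only $gKg^{-1}$-invariant, whereas the exhaustive family $(X_{n-1}^k)_k$ of Theorem \ref{Ok Whittaker inversion formula} is attached to the fixed $K$. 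That theorem is a stabilization statement for $K$-fixed vectors (the integral over $X_{n-1}^k$ equals $c(\lambda_\pi)W(I_{n-1})$ for \emph{every} $k$); it is not a limit theorem, and for a vector of a different level there is no reason the integrals over the $K$-adapted sets $X_{n-1}^k$ converge at all as $k\to\infty$, let alone to $\overline{c(\lambda_\pi)W_v^\psi(g)}$. You flag this as a ``secondary subtlety,'' but it is precisely where the argument breaks; and even granting that pointwise limit, the two dominated-convergence exchanges you then need (passing $k\to\infty$ inside the $g$-integral, and under the Plancherel integral uniformly in $\pi$) are exactly the ``main obstacle'' you name without supplying the required uniform majorizations.

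The paper's proof avoids all of this by a simple reordering. Since $f\in \CC(Z_{n-1}^\theta\backslash G_{n-1})_{\overline{K}}$ and $\pi$ is tempered, $f$ acts on $\pi$, and Equation \eqref{eq sakz} gives, for each fixed $u$, $\int_{Z_{n-1}^\theta\backslash G_{n-1}}f(g)\lambda_\pi(\pi(u^{-1})\pi(g)W)dg=\lambda_\pi(\pi(u^{-1})\pi(f)W)$. The vector $\pi(f)W$ lies in $\CW(\pi,\psi^{-1})^K$, so Ok's formula applies to it \emph{exactly, at every fixed $k$}, yielding $c(\lambda_\pi)(\pi(f)W)(I_{n-1})=c(\lambda_\pi)\int_{Z_{n-1}^\theta\backslash G_{n-1}}f(g)W(g)dg=c(\lambda_\pi)\int_{N_{n-1}Z_{n-1}^\theta\backslash G_{n-1}}W_f(g)W(g)dg=0$ by hypothesis. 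Hence every spectral coefficient of $F_k$ vanishes for each fixed $k$, so $F_k(e)=I_{X_{n-1}^k,\psi}(f)=0$, and the only limit taken is the elementary one $I_{X_{n-1}^k,\psi}(f)\to I_\psi(f)$ of Corollary \ref{cor stk}; no limit is ever passed under the Plancherel integral, and the only Fubini needed is covered by Theorem \ref{thm Bern}(a) and Proposition \ref{prop stk}. To repair your write-up, replace the pointwise use of Ok's formula on $\pi(g)v$ by this formation of $\pi(f)W$.
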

\begin{proof}
Take $K$ a compact open subgroup of $G_{n-1}$ such that $f\in \CC(Z_{n-1}^\theta\backslash G_{n-1})_{\overline{K}}$. 
For each $\pi\in \Htemp(\frac{G_{n-1}}{Z_{n-1}^\theta})$, we fix $\lambda_\pi$ in $\Hom_{H_{n-1}}(\CW(\pi,\psi^{-1}),\BC)$ such that Theorem \ref{thm Bern} applies with this family to the pair $(\frac{G_{n-1}}{Z_{n-1}^\theta},\frac{H_{n-1}}{Z_{n-1}^\theta})$. Now Theorem \ref{Ok Whittaker inversion formula} provides an increasing exhaustive family $(X_{n-1}^k)_{k\geq 0}$ of compact open subset of $N_{n-1}/ N_{n-1}^\theta $ such that for any $\pi\in \ugdist(G_{n-1})$ and any $W\in \CW(\pi,\psi^{-1})^K$, there exists $c(\lambda_{\pi})\in \BC^\times$ such that 
\begin{equation}\label{eq ok lm1} \int_{X_{n-1}^k}\lambda_{\pi}(\pi(u^{-1})W)\psi^{-1}(u)du=c(\lambda_{\pi})W(I_{n-1})\end{equation} for all $k\geq 0$. 
We then recall from Corollary \ref{cor stk} that we defined a map \[J_{X_{n-1}^k,\psi}f \in \CC(\frac{H_{n-1}}{Z_{n-1}^\theta}\backslash \frac{G_{n-1}}{Z_{n-1}^\theta})^K.\] Let's apply the Fourier inversion formula of Theorem \ref{thm Bern} to $J_{X_{n-1}^k,\psi}f$, but observe that according to Section \ref{sec TP}, the Plancherel measure of $L^2(\frac{H_{n-1}}{Z_{n-1}^\theta}\backslash \frac{G_{n-1}}{Z_{n-1}^\theta})$ is supported on $\temp(Z_{n-1}^\theta\backslash G_{n-1})$ so that we can integrate only on the tempered members of 
$\Htemp(\frac{G_{n-1}}{Z_{n-1}^\theta})$ in the Fourier inversion formula of Theorem \ref{thm Bern}. In such a situation, 
by Fubini's theorem, Equations \eqref{eq sakz} and \eqref{eq ok lm1}, and simple integration in stages and change of variables, we obtain for any tempered $\pi\in \Htemp(\frac{G_{n-1}}{Z_{n-1}^\theta})$ and any $W\in \CW(\pi,\psi{-1})^K$ the following equalities on the right hand side of the inversion formula:
\[\langle J_{X_{n-1}^k,\psi}f, W \rangle_{\lambda_\pi} = \int_{Z_{n-1}^\theta \backslash G_{n-1}}(\int_{X_{n-1}^k}f(ug)\psi^{-1}(u)du)\lambda_\pi(\pi(g)W)dg\]
 \[= \int_{X_{n-1}^k}\int_{Z_{n-1}^\theta \backslash G_{n-1}}f(ug)\lambda_\pi(\pi(g)W)dg\psi^{-1}(u)du\]
\[=\int_{X_{n-1}^k}\int_{Z_{n-1}^\theta \backslash G_{n-1}}f(g)\lambda_\pi(\pi(u^{-1})\pi(g)W))dg\psi^{-1}(u)du\]
\[=
 \int_{X_{n-1}^k}\lambda_\pi(\pi(u^{-1})\pi(f)W))\psi^{-1}(u)du=c(\lambda_\pi)(\pi(f)W)(I_{n-1})\]
\[=\int_{Z_{n-1}^\theta \backslash G_{n-1}}f(g)W(g)dg=\int_{Z_{n-1}^\theta N_{n-1}^\theta\backslash G_{n-1}}W_f(g)W(g)dg=0.\]
Considering the left hand side of the Fourier inversion formula, this gives for all $k\geq 0$:
\[0=J_{X_{n-1}^k,\psi}f(\overline{I_{n-1}})=I_{X_{n-1}^k,\psi}(f).\] By Corollary \ref{cor stk} and Lemma \ref{lm stk}, this finally gives  
\[0=I_\psi(f)=\int_{Z_{n-1}^\theta \backslash H_{n-1}}\int_{N_{n-1}/N_{n-1}^\theta}f(uh)\psi^{-1}(u)dudh=\]
\[\int_{Z_{n-1}^\theta N_{n-1}^\theta \backslash H_{n-1}}\int_{N_{n-1}}f(uh)\psi^{-1}(u)dudh=\int_{Z_{n-1}^\theta N_{n-1}^\theta \backslash H_{n-1}}W_f(u)du.\]
\end{proof}
 
\subsection{The proof of the main result}

Before moving on to the relative converse theorem, we observe that its statement is non empty thanks to Corollary \ref{cor trivial gamma}, which states that $\gamma(\pi,\pi',\psi)=1$ whenever $\pi\in \udist(G_{an})$ and $\pi'\in \tdist(G_{a(n-1)})$.

In view of Lemma \ref{lm ok orth}, the relative converse theorem easily follows from the $(n,n-1)$ Rankin-Selberg equation of \cite{JPSS}, with an extra observation when $E=F$. We denote by $w_n$ the antidiagonal matrix of $G_n$ with ones on the antidiagonal, and for $W\in \Ind_{N_n}^{G_n}(\psi)$, we set 
\[\widetilde{W}(g)=W(w_n{}^t\!g^{-1}).\] We recall the consequence of the functional equation that we need.

\begin{prop}\label{prop fct eq}
Let $\pi\in \usq(G_n)$, and $\pi'\in \temp(G_{n-1})$, then for any $(W,W')\in \CW(\pi,\psi)\times \CW(\pi',\psi^{-1})$ we have 
\[\int_{N_{n-1}\backslash G_{n-1}} \widetilde{W}(\diag(g,1))\widetilde{W'}(g)dg=\]
\[\gamma(1/2,\pi,\pi',\psi)\int_{N_{n-1}\backslash G_{n-1}} W(\diag(g),1)W'(g)dg.\] Here both sides of the functional equation are absolutely convergent and $\gamma(1/2,\pi,\pi',\psi)\neq 0$.
\end{prop}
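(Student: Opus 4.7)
The plan is to recognize this proposition as the $s=1/2$ specialization of the local Rankin-Selberg functional equation of Jacquet, Piatetski-Shapiro and Shalika for the pair $(\pi,\pi')$. Concretely, one introduces the standard local zeta integrals
\[\Psi(s,W,W') = \int_{N_{n-1}\backslash G_{n-1}} W(\diag(g,1))\, W'(g)\, \nu_E(g)^{s-1/2}\, dg\]
and the analogous dual integral $\widetilde{\Psi}(s,\widetilde{W},\widetilde{W'})$, which by \cite{JPSS} converge absolutely for $\Re(s) \gg 0$, admit meromorphic continuation to $\BC$, and are linked by the functional equation
\[\widetilde{\Psi}(1-s, \widetilde{W}, \widetilde{W'}) = \gamma(s,\pi,\pi',\psi)\,\Psi(s,W,W')\]
as a meromorphic identity. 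The claimed identity is obtained by specializing at $s=1/2$, so the real content is to check that \emph{both sides are given by absolutely convergent integrals at $s=1/2$.}

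For this, my plan is to combine the Iwasawa decomposition on $N_{n-1}\backslash G_{n-1}$ with the asymptotic expansion of Whittaker functions on the diagonal torus, in the style of \cite[Theorem 2.1]{MatRT}. The expansion for $W \in \CW(\pi,\psi)$ on $T_{n-1}$ embedded as $\diag(\cdot,1)$ is controlled by the central exponents in $\CE(A_{M_{k,n-k}}, (V_\pi)_{P_{k,n-k}})$, which by Casselman's criterion (as recalled before Section \ref{sec TP}) are \emph{strictly positive} on the cones $\diag(z_k I_k, I_{n-k})$ since $\pi \in \usq(G_n)$; the expansion for $W' \in \CW(\pi',\psi^{-1})$ is likewise controlled by central exponents which are merely nonnegative by temperedness of $\pi'$. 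Multiplying by the Jacobian $\delta_{B_{n-1}}^{-1}$ coming from the Iwasawa decomposition, the integrand is dominated on the torus by a finite sum $\sum_i \prod_k |z_k|_E^{\alpha_{i,k}} P_{i,k}(v_E(z_k))$ with all $\alpha_{i,k} > 0$, yielding absolute convergence by elementary Tate-type bounds. The very same argument, applied to $(\pi^\vee,\pi'^\vee) \in \usq(G_n) \times \temp(G_{n-1})$, gives absolute convergence of $\widetilde{\Psi}(1/2,\widetilde{W},\widetilde{W'})$.

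Once absolute convergence at $s=1/2$ is in hand, specializing the meromorphic functional equation yields the displayed identity. Finally, non-vanishing of $\gamma(1/2,\pi,\pi',\psi)$ follows from the standard identity
\[\gamma(s,\pi,\pi',\psi)\, \gamma(1-s,\pi^\vee,\pi'^\vee,\psi^{-1}) = 1,\]
obtained by applying the functional equation twice, which shows that the gamma factor has no zero at all.

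The principal technical obstacle is the absolute convergence at $s=1/2$: this is the only step that genuinely uses the square integrability of $\pi$ (to upgrade the nonnegative exponent bound coming from temperedness to a strictly positive one), and it is what allows one to avoid taking a meromorphic limit $s \to 1/2$ in the functional equation. Both this step and the identity $\gamma \cdot \check\gamma = 1$ are standard in the Rankin-Selberg literature, so no new ingredient beyond the papers already cited in the preliminaries is needed.
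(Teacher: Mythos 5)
Your reduction to the Jacquet--Piatetski-Shapiro--Shalika functional equation, specialized at $s=1/2$ after checking absolute convergence there, is exactly the paper's route: the paper justifies the convergence of both sides by the known asymptotics/majorizations of Whittaker functions (citing the relevant references rather than redoing the Iwasawa-plus-torus-exponent computation, but the content is the same: strictly positive exponents from square integrability of $\pi$ against nonnegative ones from temperedness of $\pi'$). That part of your proposal is fine; note only that your exponent estimate in fact gives convergence on an open set of $s$ around $1/2$, which is what legitimizes identifying the meromorphically continued zeta integrals with the convergent ones at $s=1/2$.

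The genuine gap is the non-vanishing of $\gamma(1/2,\pi,\pi',\psi)$. The identity $\gamma(s,\pi,\pi',\psi)\,\gamma(1-s,\pi^\vee,\pi'^\vee,\psi^{-1})=1$ does \emph{not} show that the gamma factor ``has no zero at all'': it only shows it is not identically zero, and that a zero at $s_0$ corresponds to a pole of $\gamma(\,\cdot\,,\pi^\vee,\pi'^\vee,\psi^{-1})$ at $1-s_0$, which is no contradiction since gamma factors do have poles and zeros. Indeed, writing $\gamma(s,\pi,\pi',\psi)=\e(s,\pi,\pi',\psi)\,L(1-s,\pi^\vee,\pi'^\vee)/L(s,\pi,\pi')$, its zeros are precisely the poles of $L(s,\pi,\pi')$ (local $L$-factors never vanish and $\e$ is a unit); for instance $\gamma(s,\triv,\triv,\psi)$ for $\GL_1\times\GL_1$ vanishes at $s=0$. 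So your argument says nothing about the specific point $s=1/2$. What is needed --- and what the paper uses --- is that for $\pi\in\usq(G_n)$ and $\pi'\in\temp(G_{n-1})$ the poles of $L(s,\pi,\pi')$ and of $L(s,\pi^\vee,\pi'^\vee)$ lie in $\Re(s)\leq 0$, so that $L(1/2,\pi,\pi')$ and $L(1/2,\pi^\vee,\pi'^\vee)$ are nonzero complex numbers, whence $\gamma(1/2,\pi,\pi',\psi)=\e(1/2,\pi,\pi',\psi)\,L(1/2,\pi^\vee,\pi'^\vee)/L(1/2,\pi,\pi')\in\BC^\times$. This is a one-line repair, but as written the final step of your proposal is unjustified.
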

\begin{proof}
The absolute convergence of both sides follows for example from the asymptotics of Whittaker functions given in \cite{LMas} and \cite[Section 3.4]{DWhit}, or \cite{MatDer}. The claim on the gamma factor follows from the fact that $L(1/2,\pi,\pi')\in \BC^\times$ for any couple of generic unitary representations (see for example the properties of $L$ factors of \cite{JPSS} recalled in \cite{MO}). 
\end{proof}

We will also use the following fact from \cite[Proposition 2.3]{LM} and the discussion after it. 

\begin{lem}\label{lm LM}
Let $\pi\in \usq(G_n)$. For any matrix coefficient $\Phi$ of $\pi$, the function 
\[W(\Phi):g\in G_{n}\to \int_{N_n}\Phi(ug)\psi^{-1}(u)du\] is defined by absolutely convergent integrals. Moreover it belongs to $\CW(\pi,\psi)$, and any Whittaker function in $\CW(\pi,\psi)$ is of this form. 
\end{lem}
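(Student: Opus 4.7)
My plan is to split the statement into three claims—absolute convergence of the integral, membership of $W(\Phi)$ in $\CW(\pi,\psi)$, and exhaustion of the Whittaker model—and handle them in that order, using only the Harish-Chandra Schwartz theory recalled earlier together with uniqueness of the Whittaker model.

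For absolute convergence, I would write $\Phi=\Phi_{v,v^\vee}$ with $\Phi_{v,v^\vee}(g):=\langle\pi(g)v,v^\vee\rangle$. Since $\pi\in\usq(G_n)$ one has $\Phi\in\CA_2(G_n)$, so by Proposition~\ref{prop WHC} the restriction $\Phi|_{G_n^1}$ belongs to $\CC(G_n^1)$. As $N_n\subseteq G_n^1$, Lemma~\ref{lm proj to WHCS space} applied inside $G_n^1$ yields the absolute convergence of $\int_{N_n}\Phi(ug)\psi^{-1}(u)du$ for all $g\in G_n^1$ and shows that $W(\Phi)|_{G_n^1}$ lies in the Whittaker Harish-Chandra Schwartz space of $G_n^1$. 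For arbitrary $g\in G_n$ I would pick a representative $g_0\in T_n$ of the coset $gG_n^1$, write $g=g_0 g'$ with $g'\in G_n^1$, and conjugate $N_n$ past $g_0$: this transforms $\Phi(ug)$ into $\Phi(g_0\cdot u'\cdot g')$ for $u'\in N_n$ (since $T_n$ normalizes $N_n$) and replaces $\psi$ by another non-degenerate character of $N_n$, reducing the problem to Lemma~\ref{lm proj to WHCS space} applied once more inside $G_n^1$.

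Next I would verify the Whittaker transformation and the equivariance needed. The change of variable $u\mapsto un^{-1}$ for $n\in N_n$ in the defining integral immediately yields $W(\Phi)(ng)=\psi(n)W(\Phi)(g)$, so $W(\Phi)\in\Ind_{N_n}^{G_n}(\psi)$. Using $\Phi_{\pi(h)v,v^\vee}(g)=\Phi_{v,v^\vee}(gh)$, for fixed $v^\vee$ the map $v\mapsto W(\Phi_{v,v^\vee})$ is $G_n$-equivariant from $\pi$ into $\Ind_{N_n}^{G_n}(\psi)$ (both equipped with right translation). By Frobenius reciprocity and multiplicity one of the Whittaker model (which applies since square integrable representations of $G_n$ are generic), this map equals a scalar multiple of the canonical embedding $\pi\hookrightarrow\CW(\pi,\psi)$; denoting the scalar by $c(v^\vee)$, one obtains $W(\Phi_{v,v^\vee})=c(v^\vee)W_v\in\CW(\pi,\psi)$, where $W_v$ is the Whittaker function attached to $v$ and $c\colon V_{\pi^\vee}\to\BC$ is linear.

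Finally, for the surjectivity claim I would show $c\neq 0$. The parallel change of variable $u\mapsto n^{-1}u$ in the integral defining $W(\Phi_{v,\pi^\vee(n)v^\vee})(e)$ shows that $c\in\Hom_{N_n}(\pi^\vee,\psi^{-1})$, a one-dimensional space by genericity of $\pi^\vee$. Hence once one exhibits a single pair $(v,v^\vee)$ with $W(\Phi_{v,v^\vee})\neq 0$, the map $v\mapsto W(\Phi_{v,v^\vee})$ becomes an isomorphism onto $\CW(\pi,\psi)$ for any such $v^\vee$, proving the third claim. The main obstacle is exactly this nonvanishing, which I would secure following the argument of \cite{LM}: view the matrix coefficient as a function in the Harish-Chandra Schwartz space of $Z_n\backslash G_n$ (using unitarity of the central character of $\pi$) and apply a Plancherel-type inversion on $N_n$ to recover the Whittaker function of a prescribed vector, an identity that cannot vanish thanks to the nontriviality of the Whittaker model on any generic representation.
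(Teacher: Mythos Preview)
Your proposal is correct and aligns with the paper, which does not give an independent proof but simply cites \cite[Proposition 2.3]{LM} and the discussion following it. You supply more detail for the first two claims (absolute convergence and membership in $\CW(\pi,\psi)$) than the paper does, but for the decisive third claim---the nonvanishing of the functional $c$, equivalently the surjectivity onto $\CW(\pi,\psi)$---you, like the paper, ultimately defer to \cite{LM}. One small technical remark: Lemma~\ref{lm proj to WHCS space} is stated for a reductive group $G$, and $G_n^1$ is not literally of that form, so your phrase ``applied inside $G_n^1$'' is slightly informal; a cleaner route is to observe that $\int_{N_n}\Phi(ug)\psi^{-1}(u)\,du=\int_{N_n}(R(g)\Phi)(u)\psi^{-1}(u)\,du$ with $R(g)\Phi$ again a matrix coefficient, reducing to convergence at $g=e$, which follows directly from the Schwartz bounds on $\CC(G_n^1)$ and \cite[II.4.5]{WPL}. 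This avoids the coset decomposition and the change of non-degenerate character altogether.
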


Here is our main result, which is optimal in the sense explained after its proof.

\begin{thm}\label{thm main}
Let $\pi\in \usq(G_{an})$ such that $\gamma(\pi,\pi',\psi)=1$ for any $\pi'\in \tdist(G_{a(n-1)})$, then $\pi\in \usqdist(G_{an})$. 
\end{thm}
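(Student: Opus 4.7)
The strategy is to construct an explicit non-zero $H_{an}$-invariant linear form on $V_\pi$, extending Ok's cuspidal construction to the square-integrable setting via the Harish-Chandra Schwartz space framework developed in Section \ref{sec part Four}. For $v \in V_\pi$ with Whittaker function $W_v \in \CW(\pi,\psi)$, Lemma \ref{lm LM} produces a matrix coefficient $\Phi$ of $\pi$ with $W(\Phi)=W_v$, and I define
\[L(v) := \int_{Z_{a(n-1)}^\theta \backslash H_{a(n-1)}} P_\Phi(h)\,dh.\]
The integral converges absolutely thanks to Proposition \ref{prop main}, which places $P_\Phi$ in $\CC(Z_{a(n-1)}^\theta\backslash G_{a(n-1)})$, combined with Proposition \ref{prop CZ} applied to the tempered symmetric pair $(G_{a(n-1)}, H_{a(n-1)})$. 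A direct computation gives the key identity $W_{P_\Phi}(g)=\int_{Z_{a(n-1)}^\theta} W_v(\diag(zg,1))\,dz$, which after an Iwasawa unfolding identifies $L(v)$ with a regularization of the (generally divergent) formal expression $\int_{N_{a(n-1)}^\theta\backslash H_{a(n-1)}} W_v(\diag(h,1))\,dh$; this in particular shows that $L(v)$ depends only on $v$ and not on the choice of matrix coefficient representative $\Phi$.

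Invariance of $L$ under the subgroup $\diag(H_{a(n-1)},1)\subseteq H_{an}$ is immediate from translation invariance of $dh$. By the standard analysis of the symmetric subgroup structure (cf.\ \cite{Ok}), $H_{an}$ is generated by $\diag(H_{a(n-1)},1)$ together with one extra element $\epsilon$, so it remains to check $L(\pi(\epsilon)v)=L(v)$. Expanding both sides and reorganizing, one expresses the difference as $\int_{Z_{a(n-1)}^\theta N_{a(n-1)}^\theta\backslash H_{a(n-1)}} W_f(h)\,dh$ for some $f\in\CC(Z_{a(n-1)}^\theta\backslash G_{a(n-1)})$ built functorially from $\Phi$. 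By Lemma \ref{lm ok orth} (whose proof crucially used the tempered support of the Plancherel measure and Ok's Whittaker inversion), the vanishing of this quantity is reduced to the spectral orthogonality statement: for every $\pi'\in\tdist(G_{a(n-1)})$ and every $W'\in\CW(\pi',\psi^{-1})$, $\int_{N_{a(n-1)}Z_{a(n-1)}^\theta\backslash G_{a(n-1)}} W_f(g)W'(g)\,dg=0$. Unfolding the $Z_{a(n-1)}^\theta$-average in $W_f$, this pairing becomes a combination of the Rankin-Selberg integral $I(1/2,W(\Phi),W')$ and its contragredient companion $\widetilde I(1/2,W(\Phi),W')$. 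By Proposition \ref{prop fct eq}, these agree precisely when $\gamma(1/2,\pi,\pi',\psi)=1$, which is our standing hypothesis, yielding the desired vanishing and hence $H_{an}$-invariance of $L$.

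Non-vanishing of $L$ is obtained by choosing $v_0\in V_\pi$ whose Whittaker function $W_{v_0}$ has small support (concentrated near the identity in a suitable Bruhat cell of $G_{a(n-1)}$); the cuspidal-case argument of \cite{Ok} then adapts after regularization, making $L(v_0)$ a non-trivial translate of $W_{v_0}$ and hence non-zero. Combining the three steps produces a non-zero $H_{an}$-invariant linear form on $V_\pi$, so $\pi\in\usqdist(G_{an})$.

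The main obstacle is the $H_{an}$-invariance step, and specifically the need to carry out all manipulations entirely inside the Harish-Chandra Schwartz space $\CC(Z_{a(n-1)}^\theta\backslash G_{a(n-1)})$ rather than in the compactly supported subspace $\sm_c$ available to Ok. This is precisely what makes the technical content of Section \ref{sec part Four} (notably Proposition \ref{prop main}, which required the delicate asymptotic expansions of Lemmas \ref{lm as whit1}--\ref{lm as whit3}) and the absolute convergence statements of Corollary \ref{cor stk} indispensable. The bookkeeping needed in the linear case $E=F$ (involving the character $\delta_n$ and the twisted decomposition $h_n$ of $H_n$) introduces some minor additional technicalities but does not affect the overall structure of the argument.
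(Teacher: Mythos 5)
Your overall architecture (matrix coefficients via Lemma \ref{lm LM}, the corank-one partial Fourier coefficient $P_\Phi\in\CC(Z^\theta\backslash G)$ from Proposition \ref{prop main}, the spectral orthogonality Lemma \ref{lm ok orth}, and the functional equation of Proposition \ref{prop fct eq}) is the right one, but there are genuine gaps. The most serious is the linear case $E=F$: you run everything on $G_{a(n-1)}$, while the corank-one machinery ($P_\Phi$, Ok's inversion, Lemma \ref{lm ok orth}, the Plancherel formula for $H_{an-1}\backslash G_{an-1}$, and the $(an,an-1)$ functional equation) lives on $G_{an-1}$, which is a \emph{different} group when $a=2$. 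The hypothesis only controls $\gamma(1/2,\pi,\pi',\psi)$ for $\pi'\in\tdist(G_{a(n-1)})$, so one must first know that every member of $\tdist(G_{an-1})$ is of the form $\pi'\times 1$ with $\pi'\in\tdist(G_{a(n-1)})$, and then multiplicativity gives $\gamma(1/2,\pi,\pi'\times 1,\psi)=\gamma(1/2,\pi,\psi)$, a constant $c_0$ which is in general \emph{not} $1$ and must be built into the auxiliary coefficient $f=c_0\Phi-\Phi(w_{an}\diag(w_{an-1},1)\,\cdot\,)$. Your claim that the two Rankin--Selberg integrals ``agree precisely when $\gamma(1/2,\pi,\pi',\psi)=1$'' is false as stated when $E=F$, and dismissing this case as minor bookkeeping removes the step without which the hypothesis cannot be brought to bear at all.

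The invariance and non-vanishing steps also have gaps. You assert that $H_{an}$ is generated by $\diag(H_{a(n-1)},1)$ and a single element $\epsilon$; this is unjustified, is not the fact used by Ok, and fails in low rank for natural choices of $\epsilon$. The correct mechanism is different: by Lemma \ref{lm BFmat} the integral $\lambda_\pi(W)=\int_{N^\theta\backslash H}W(\diag(h,1))dh$ converges absolutely for unitary generic $\pi$ (it is \emph{not} ``generally divergent'', so no regularization is needed), and one has two forms, $\lambda_\pi$ which is $P_{an}^\theta$-invariant and its companion $\mu_\pi(W)=\lambda$-type integral of $\widetilde W$ which is ${}^t\!P_{an}^\theta$-invariant; the gamma hypothesis, fed through Proposition \ref{prop fct eq} and Lemma \ref{lm ok orth} applied to $W_{P_f}$, forces $c_0\lambda_\pi=\mu_\pi$, and $H_{an}$-invariance follows because $H_{an}$ is generated by $P_{an}^\theta$ and its transpose, with non-vanishing coming from the Bernstein--Zelevinsky derivative argument inside Lemma \ref{lm BFmat}. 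Relatedly, your identification of $L(v)=\int_{Z^\theta\backslash H}P_\Phi(h)dh$ with (a regularization of) $\int_{N^\theta\backslash H}W_v(\diag(h,1))dh$ ``after an Iwasawa unfolding'' is not a formal manipulation: passing between the $H$-integral of the partial Fourier coefficient and the $N^\theta\backslash H$-integral of the Whittaker function is exactly the spectral content of Ok's inversion formula and Bernstein's Plancherel formula, so neither the independence of $L$ from the choice of $\Phi$ nor your non-vanishing argument is actually established as written.
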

\begin{proof}
First we observe that when $E=F$, using the Bernstein-Zelevinsky product notation, the representation $\pi'\times 1$ belongs to $\tdist(G_{an-1})$ according to \cite[Proposition 3.8]{MatCrelle} and \cite[Corollary 3.6]{Yunitary}, and that any representation in $\tdist(G_{an-1})$ is actually of this form thanks to \cite[3.13]{MatCrelle} and \cite[Corollary 3.6]{Yunitary} again. We also observe that by multiplicativity of gamma factors: $\gamma(1/2,\pi,\pi'\times 1,\psi)= \gamma(1/2,\pi,\psi)$. Hence we put $c_0=1$ when $[E:F]=2$ and $c_0=\gamma(\pi,\psi)\in \BC^\times$ when $E=F$. 
We fix $W\in \CW(\pi,\psi)$ and write it under the form $W(\Phi)$ thanks to Lemma \ref{lm LM}. For $g\in G_{an}$ we then put 
\[f(g):=c_0\Phi(g)-\Phi(w_{an}\diag(w_{an-1},1)g),\] so that $f\in \CA_2(G_{an})$. 
Using the notations of Lemma \ref{lm proj to WHCS space} and Proposition \ref{prop main}, we observe that 
\[W_{P_f}(g)=\int_{Z_{an-1}^\theta} c_0W(\diag(zg,1))-W(w_{an}\diag(w_{an-1}zg,1))dz\] for all $g\in G_{n-1}$. 
Rewriting the functional equation of Proposition \ref{prop fct eq}, we obtain for any $\pi''\in \tdist(G_{an-1})$ and any 
$W''\in \CW(\pi'',\psi^{-1})$: 
\[0=\int_{N_{an-1}\backslash G_{an-1}} (c_0 W(\diag(g,1))-W(w_{an}\diag(w_{an-1}g,1))) W''(g)dg\] 
\[=\int_{N_{an-1}Z_{an-1}^\theta\backslash G_{an-1}}W_{P_f}(g) W'(g)dg.\]

By Lemma \ref{lm ok orth}, this implies that 
\[\int_{N_{an-1}^\theta Z_{an-1}^\theta\backslash H_{an-1}}W_{P_f}(h)=0,\] which we rewrite 
\[c_0\int_{N_{an-1}^\theta\backslash H_{an-1}} W(\diag(h,1))= \int_{N_{an-1}^\theta\backslash H_{an-1}} \widetilde{W}(\diag(h,1)).\]
Hence we have two linear forms on $\CW(\pi,\psi)$
\[\lambda_{\pi}:W\to \int_{N_{an-1}^\theta\backslash H_{an-1}} W(\diag(h,1)) \] and 
\[\mu_{\pi}:W\to \int_{N_{an-1}^\theta\backslash H_{an-1}} \widetilde{W}(\diag(h,1))\] which are equal up to a nonzero scalar, but the first is $P_{an}^\theta$-invariant, whereas the second 
is ${}^t\! P_{an}^\theta$-invariant. Hence $\lambda_{\pi}$ is $H_{an}$-invariant because $H_{an}$ is generated by $P_{an}^\theta$ and its transpose. It is moreover nonzero by the theory of Bernstein-Zelevinsky derivatives, as recalled in the proof of Lemma \ref{lm BFmat}  
\end{proof}

\begin{rem}\label{rem final 1}
Originally Ok expected his result to extend to tempered representations. In \cite[Example 5.6]{MO}, by giving a fake counter-example, we wrongly claimed that Ok's result can at most be extended to discrete series representations. This was observed by Beuzart-Plessis, who took the time to give a real counter-example to \cite[Example 5.6]{MO}, and who also observed that one can extend Ok's result to all tempered representations, as well as Archimedean local fields, by following the steps of this paper and using results from \cite{BPplanch}. More generally Theorem \ref{thm main} can be extended to all tempered representations. We shall come back to it soon. 
\end{rem}

\begin{rem}
Theorem \ref{thm main} can be restated in terms of representations of the Weil-Deligne group, using the Artin-Deligne-Langlands gamma factors as follows.
\begin{enumerate}
\item When $[E:F]=2$, replace in its statement $\pi$ by an irreducible representation $\phi$ of dimension $n\geq 2$ of $W_F\times \SL_2(\BC)$, and make the twisting representation vary amongst the semi-simple and bounded conjugate-orthogonal representations of $W_F\times \SL_2(\BC)$ of dimension $n-1$. The conclusion becomes that $\phi$ is conjugate-orthogonal.
\item When $E=F$, replace in its statement $\pi$ by an irreducible representation $\phi$ of dimension $2n\geq 4$ of $W_F\times \SL_2(\BC)$, and make the twisting representation vary amongst the semi-simple and bounded symplectic representations of $W_F\times \SL_2(\BC)$ of dimension $2n-2$. The conclusion becomes that $\phi$ is symplectic.
\end{enumerate}
\end{rem}

\begin{rem}\label{rem final 2}
We feel that one could perhaps replace gamma factors by epsilon factors in Theorem \ref{thm main}. It is obvious if one assumes that $\pi$ is cuspidal, and not hard to check if one assumes a priori that the discrete series $\pi$ satisfies $\pi^\vee\simeq \pi^\theta$. In particular if $E=F$ and $\pi$ is cuspidal, or equivalently if $\phi_\pi$ is an irreducible representation of $W_F$, Theorem \ref{thm main} can be thought as a converse of \cite[Proposition 2.1]{PR}.
\end{rem}

	\bibliographystyle{alphanum}
	\bibliography{references}

\newcommand{\etalchar}[1]{$^{#1}$}
\def\cprime{$'$}
\begin{thebibliography}{CKPSS}

\bibitem[ALM{\etalchar{+}}]{Sign}
U.K. Anandavardhanan, H.~Lu, N.~Matringe, V.~S\'{e}cherre, Yang~C. with an
  appendix~by S.~Tamori, and M.~Suzuki.
\newblock The sign of linear periods.
\newblock {\em preprint, https://arxiv.org/abs/2402.12106}, 2019.

\bibitem[Ber1]{BernMir}
Joseph~N. Bernstein.
\newblock {$P$}-invariant distributions on {${\rm GL}(N)$} and the
  classification of unitary representations of {${\rm GL}(N)$}
  (non-{A}rchimedean case).
\newblock In {\em Lie group representations, {II} ({C}ollege {P}ark, {M}d.,
  1982/1983)}, volume 1041 of {\em Lecture Notes in Math.}, pages 50--102.
  Springer, Berlin, 1984.

\bibitem[Ber2]{BernPL}
Joseph~N. Bernstein.
\newblock On the support of {P}lancherel measure.
\newblock {\em J. Geom. Phys.}, 5(4):663--710, 1988.

\bibitem[BO]{BO}
Yves Benoist and Hee Oh.
\newblock Polar decomposition for {$p$}-adic symmetric spaces.
\newblock {\em Int. Math. Res. Not. IMRN}, (24):Art. ID rnm121, 20, 2007.

\bibitem[BP1]{BPGalsq}
Rapha{\"e}l Beuzart-Plessis.
\newblock On distinguished square-integrable representations for galois pairs
  and a conjecture of prasad.
\newblock {\em Inventiones mathematicae}, 214:437--521, 2018.

\bibitem[BP2]{BPlocal}
Rapha\"el Beuzart-Plessis.
\newblock A local trace formula for the {G}an-{G}ross-{P}rasad conjecture for
  unitary groups: the {A}rchimedean case.
\newblock {\em Ast\'erisque}, (418):ix+305, 2020.

\bibitem[BP3]{BPplanch}
Rapha\"{e}l Beuzart-Plessis.
\newblock Plancherel formula for {${\rm GL}_n(F)\backslash {\rm GL}_n(E)$} and
  applications to the {I}chino-{I}keda and formal degree conjectures for
  unitary groups.
\newblock {\em Invent. Math.}, 225(1):159--297, 2021.

\bibitem[BZ]{BZ2}
J.~Bernstein and A.~V. Zelevinsky.
\newblock Induced representations of reductive $\mathfrak{p}$-adic groups. {I}.
\newblock In {\em Annales scientifiques de l'{\'E}cole normale sup{\'e}rieure},
  volume~10, pages 441--472, 1977.

\bibitem[Cas]{Cas}
N.~Casselman.
\newblock Introduction to the theory of admissible representations of reductive
  p-adic groups.
\newblock {\em
  https://personal.math.ubc.ca/~cass/research/pdf/p-adic-book.pdf}, 1995.

\bibitem[CJLT]{CJLT}
Cheng Chen, Dihua Jiang, Bingchen Lin, and Fangyang Tian.
\newblock Archimedean non-vanishing, cohomological test vectors, and standard
  {$L$}-functions of {${\rm GL}_{2n}$}: real case.
\newblock {\em Math. Z.}, 296(1-2):479--509, 2020.

\bibitem[CKPSS]{CKPSS}
J.~W. Cogdell, H.~H. Kim, I.~I. Piatetski-Shapiro, and F.~Shahidi.
\newblock On lifting from classical groups to {${\rm GL}_N$}.
\newblock {\em Publ. Math. Inst. Hautes \'{E}tudes Sci.}, (93):5--30, 2001.

\bibitem[Del1]{Del}
Pierre Deligne.
\newblock Les constantes locales de l'\'{e}quation fonctionnelle de la fonction
  {$L$} d'{A}rtin d'une repr\'{e}sentation orthogonale.
\newblock {\em Invent. Math.}, 35:299--316, 1976.

\bibitem[Del2]{DWhit}
Patrick Delorme.
\newblock Formule de {P}lancherel pour les fonctions de {W}hittaker sur un
  groupe r\'eductif {$p$}-adique.
\newblock {\em Ann. Inst. Fourier (Grenoble)}, 63(1):155--217, 2013.

\bibitem[DH]{DH}
Patrick Delorme and Pascale Harinck.
\newblock Wave packets in the {S}chwartz space of a reductive {$p$}-adic
  symmetric space.
\newblock {\em J. Lie Theory}, 24(1):41--75, 2014.

\bibitem[DHS]{DHS}
Patrick Delorme, Pascale Harinck, and Yiannis Sakellaridis.
\newblock Paley-{W}iener theorems for a {$p$}-adic spherical variety.
\newblock {\em Mem. Amer. Math. Soc.}, 269(1312):v+102, 2021.

\bibitem[FJ]{FJ}
Solomon Friedberg and Herv{\'e} Jacquet.
\newblock Linear periods.
\newblock {\em J. reine angew. Math}, 443(91):139, 1993.

\bibitem[Fli]{Fli}
Yuval~Z. Flicker.
\newblock On distinguished representations.
\newblock {\em J. Reine Angew. Math.}, 418:139--172, 1991.

\bibitem[Gan]{Ganthetaperiods}
Wee~Teck Gan.
\newblock Periods and theta correspondence.
\newblock In {\em Representations of reductive groups}, volume 101 of {\em
  Proc. Sympos. Pure Math.}, pages 113--132. Amer. Math. Soc., Providence, RI,
  2019.

\bibitem[GGP]{GGP}
Wee~Teck Gan, Benedict~H. Gross, and Dipendra Prasad.
\newblock Symplectic local root numbers, central critical {$L$} values, and
  restriction problems in the representation theory of classical groups.
\newblock Number 346, pages 1--109. 2012.
\newblock Sur les conjectures de Gross et Prasad. I.

\bibitem[GK]{GK}
I.~M. Gelfand and D.~A. Kazhdan.
\newblock Representations of the group {${\rm GL}(n,K)$} where {$K$} is a local
  field.
\newblock In {\em Lie groups and their representations ({P}roc. {S}ummer
  {S}chool, {B}olyai {J}\'{a}nos {M}ath. {S}oc., {B}udapest, 1971)}, pages
  95--118. Halsted, New York, 1975.

\bibitem[GO]{GO}
Maxim Gurevich and Omer Offen.
\newblock A criterion for integrability of matrix coefficients with respect to
  a symmetric space.
\newblock {\em J. Funct. Anal.}, 270(12):4478--4512, 2016.

\bibitem[GRS]{GRS}
David Ginzburg, Stephen Rallis, and David Soudry.
\newblock Generic automorphic forms on {${\rm SO}(2n+1)$}: functorial lift to
  {${\rm GL}(2n)$}, endoscopy, and base change.
\newblock {\em Internat. Math. Res. Notices}, (14):729--764, 2001.

\bibitem[Hak]{Hduke}
Jeff Hakim.
\newblock Distinguished {$p$}-adic representations.
\newblock {\em Duke Math. J.}, 62(1):1--22, 1991.

\bibitem[Hen1]{H93}
Guy Henniart.
\newblock Caract\'erisation de la correspondance de {L}anglands locale par les
  facteurs {$\epsilon$} de paires.
\newblock {\em Invent. Math.}, 113(2):339--350, 1993.

\bibitem[Hen2]{Hllc}
Guy Henniart.
\newblock Une preuve simple des conjectures de {L}anglands pour \rm{GL}(n) sur
  un corps p-adique.
\newblock {\em Inventiones mathematicae}, 139(2):439--455, 2000.

\bibitem[HO]{HO}
Jeffrey Hakim and Omer Offen.
\newblock Distinguished representations of {${\rm GL}(n)$} and local converse
  theorems.
\newblock {\em Manuscripta Math.}, 148(1-2):1--27, 2015.

\bibitem[HT]{HTllc}
Michael Harris and Richard Taylor.
\newblock {\em The Geometry and Cohomology of Some Simple Shimura
  Varieties.(AM-151)}, volume 151.
\newblock Princeton university press, 2001.

\bibitem[HW]{HW}
A.~G. Helminck and S.~P. Wang.
\newblock On rationality properties of involutions of reductive groups.
\newblock {\em Adv. Math.}, 99(1):26--96, 1993.

\bibitem[Jac]{Jarch}
Herv\'{e} Jacquet.
\newblock Archimedean {R}ankin-{S}elberg integrals.
\newblock In {\em Automorphic forms and {$L$}-functions {II}. {L}ocal aspects},
  volume 489 of {\em Contemp. Math.}, pages 57--172. Amer. Math. Soc.,
  Providence, RI, 2009.

\bibitem[JPSS]{JPSS}
H.~Jacquet, I.~I. Piatetskii-Shapiro, and J.~A. Shalika.
\newblock Rankin-{S}elberg convolutions.
\newblock {\em Amer. J. Math.}, 105(2):367--464, 1983.

\bibitem[JR]{JR}
Herv{\'e} Jacquet and Stephen Rallis.
\newblock Uniqueness of linear periods.
\newblock {\em Compositio Mathematica}, 102(1):65--123, 1996.

\bibitem[JS1]{JSext}
Herv\'{e} Jacquet and Joseph Shalika.
\newblock Exterior square {$L$}-functions.
\newblock In {\em Automorphic forms, {S}himura varieties, and {$L$}-functions,
  {V}ol. {II} ({A}nn {A}rbor, {MI}, 1988)}, volume~11 of {\em Perspect. Math.},
  pages 143--226. Academic Press, Boston, MA, 1990.

\bibitem[JS2]{JiSo}
Dihua Jiang and David Soudry.
\newblock The local converse theorem for {${\rm SO}(2n+1)$} and applications.
\newblock {\em Ann. of Math. (2)}, 157(3):743--806, 2003.

\bibitem[JS3]{JiSo2}
Dihua Jiang and David Soudry.
\newblock Generic representations and local {L}anglands reciprocity law for
  {$p$}-adic {${\rm SO}_{2n+1}$}.
\newblock In {\em Contributions to automorphic forms, geometry, and number
  theory}, pages 457--519. Johns Hopkins Univ. Press, Baltimore, MD, 2004.

\bibitem[Kim]{Kim}
Henry~H. Kim.
\newblock Applications of {L}anglands' functorial lift of odd orthogonal
  groups.
\newblock {\em Trans. Amer. Math. Soc.}, 354(7):2775--2796, 2002.

\bibitem[Kna]{Knapp}
A.~W. Knapp.
\newblock Local {L}anglands correspondence: the {A}rchimedean case.
\newblock In {\em Motives ({S}eattle, {WA}, 1991)}, volume~55 of {\em Proc.
  Sympos. Pure Math.}, pages 393--410. Amer. Math. Soc., Providence, RI, 1994.

\bibitem[Lan]{Larch}
R.~P. Langlands.
\newblock On the classification of irreducible representations of real
  algebraic groups.
\newblock In {\em Representation theory and harmonic analysis on semisimple
  {L}ie groups}, volume~31 of {\em Math. Surveys Monogr.}, pages 101--170.
  Amer. Math. Soc., Providence, RI, 1989.

\bibitem[Li]{WWL}
Wen-Wei Li.
\newblock {\em Zeta integrals, {S}chwartz spaces and local functional
  equations}, volume 2228 of {\em Lecture Notes in Mathematics}.
\newblock Springer, Cham, 2018.

\bibitem[LM1]{LMas}
Erez Lapid and Zhengyu Mao.
\newblock On the asymptotics of {W}hittaker functions.
\newblock {\em Represent. Theory}, 13:63--81, 2009.

\bibitem[LM2]{LM}
Erez Lapid and Zhengyu Mao.
\newblock Whittaker-{F}ourier coefficients of cusp forms on {$\widetilde{\rm
  Sp}_n$}: reduction to a local statement.
\newblock {\em Amer. J. Math.}, 139(1):1--55, 2017.

\bibitem[Mat1]{MatDer}
N.~Matringe.
\newblock Derivatives and asymptotics of whittaker functions.
\newblock {\em arxiv.org/pdf/1004.1315}, 2018.

\bibitem[Mat2]{MatPac}
Nadir Matringe.
\newblock Distinguished principal series representations of {${\rm GL}(n)$}
  over a {$p$}-adic field.
\newblock {\em Pacific J. Math.}, 239(1):53--63, 2009.

\bibitem[Mat3]{MatRT}
Nadir Matringe.
\newblock Derivatives and asymptotics of {W}hittaker functions.
\newblock {\em Represent. Theory}, 15:646--669, 2011.

\bibitem[Mat4]{MatPJM}
Nadir Matringe.
\newblock Unitary representations of gl (n, k) distinguished by a galois
  involution for a p-adic field k.
\newblock {\em Pacific Journal of Mathematics}, 271(2):445--460, 2014.

\bibitem[Mat5]{MatCrelle}
Nadir Matringe.
\newblock On the local {B}ump--{F}riedberg {$L$}-function.
\newblock {\em Journal f{\"u}r die reine und angewandte Mathematik (Crelles
  Journal)}, 2015(709):119--170, 2015.

\bibitem[Mat6]{MatBLMS}
Nadir Matringe.
\newblock Shalika periods and parabolic induction for {$GL(n)$} over a
  non-archimedean local field.
\newblock {\em Bull. Lond. Math. Soc.}, 49(3):417--427, 2017.

\bibitem[MO]{MO}
Nadir Matringe and Omer Offen.
\newblock Gamma factors, root numbers, and distinction.
\newblock {\em Canad. J. Math.}, 70(3):683--701, 2018.

\bibitem[Nie]{Nien}
Chufeng Nien.
\newblock Gamma factors and quadratic extension over finite fields.
\newblock {\em Manuscripta Math.}, 158(1-2):31--54, 2019.

\bibitem[Ok]{Ok}
Youngbin Ok.
\newblock {\em Distinction and gamma factors at 1/2: {S}upercuspidal case}.
\newblock ProQuest LLC, Ann Arbor, MI, 1997.
\newblock Thesis (Ph.D.)--Columbia University.

\bibitem[PR1]{PR}
Dipendra Prasad and A.~Raghuram.
\newblock Kirillov theory for {${\rm GL}_2( D)$} where {$ D$} is a division
  algebra over a non-{A}rchimedean local field.
\newblock {\em Duke Math. J.}, 104(1):19--44, 2000.

\bibitem[PR2]{PR99}
Dipendra Prasad and Dinakar Ramakrishnan.
\newblock On the global root numbers of {${\rm GL}(n)\times{\rm GL}(m)$}.
\newblock In {\em Automorphic forms, automorphic representations, and
  arithmetic ({F}ort {W}orth, {TX}, 1996)}, volume 66, Part 2 of {\em Proc.
  Sympos. Pure Math.}, pages 311--330. Amer. Math. Soc., Providence, RI, 1999.

\bibitem[Roh]{R}
David~E. Rohrlich.
\newblock Galois theory, elliptic curves, and root numbers.
\newblock {\em Compositio Math.}, 100(3):311--349, 1996.

\bibitem[Rud]{Rud}
Walter Rudin.
\newblock {\em Functional analysis}.
\newblock International Series in Pure and Applied Mathematics. McGraw-Hill,
  Inc., New York, second edition, 1991.

\bibitem[Tak]{Tak}
Shuichiro Takeda.
\newblock On relatively tempered representations for {$p$}-adic symmetric
  spaces.
\newblock In {\em Representations of reductive {$p$}-adic groups}, volume 328
  of {\em Progr. Math.}, pages 263--289. Birkh\"auser/Springer, Singapore,
  2019.

\bibitem[Wal]{WPL}
J.-L. Waldspurger.
\newblock La formule de {P}lancherel pour les groupes {$p$}-adiques (d'apr\`es
  {H}arish-{C}handra).
\newblock {\em J. Inst. Math. Jussieu}, 2(2):235--333, 2003.

\bibitem[Yan]{Yunitary}
Chang Yang.
\newblock Linear periods for unitary representations.
\newblock {\em Math. Z.}, 302(4):2253--2284, 2022.

\bibitem[Zha]{CZlocal}
Chong Zhang.
\newblock Local periods for discrete series representations.
\newblock {\em J. Funct. Anal.}, 271(6):1525--1543, 2016.

\end{thebibliography}
	
\end{document}